% EJC papers must begin as follows
\documentclass[10pt]{article}
%\usepackage{e-jc}

% only use standard LaTeX packages
% only include essential packages

% we recommend these ams packages
\usepackage{amsthm,amsmath,amssymb}
\usepackage{yfonts}

\usepackage{ytableau}
% we recommend the graphicx package for importing figures
\usepackage{graphicx,url}

\usepackage{tikz-cd}
\usetikzlibrary{decorations.pathreplacing}

\usepackage{ytableau}
% use this command to create hyperlinks (optional and recommended)
\usepackage[colorlinks=true,citecolor=blue,linkcolor=blue,urlcolor=blue]{hyperref}

% use these commands for typesetting doi and arXiv references in the bibliography

\newcommand{\arxiv}[1]{\href{http://arxiv.org/abs/#1}{\texttt{arXiv:#1}}}

% all overfull boxes must be fixed; i.e. there must be no text protruding into the margins
% use \sloppy to avoid overly wide text
\sloppy

% declare theorem-like environments
\theoremstyle{plain}
\newtheorem{theorem}{Theorem}[section]
\newtheorem{lemma}[theorem]{Lemma}
\newtheorem{corollary}[theorem]{Corollary}

\theoremstyle{definition}
\newtheorem{definition}[theorem]{Definition}
\newtheorem{example}[theorem]{Example}
\newtheorem{conjecture}[theorem]{Conjecture}

\theoremstyle{remark}

%%%%%%%%%%%%%%%%%%%%%%%%%%%%%%%%%%%%%%%%%%%%%%%%%%%%%%%

% if needed include a line break (\\) at an appropriate place in the title

\title{\bf Linear function of a poset}

% input author, affilliation, address and support information as follows;
% the address should include the country, and does not have to include
% the street address

\author{Stefan Mitrovi\'c\footnote{Corresponding author\\ Address : stefan.mitrovic@matf.bg.ac.rs}\\
\small Faculty of Mathematics\\[-0.8ex]
\small University of Belgrade\\[-0.8ex]
\small Serbia\\
\small\tt stefan.mitrovic@matf.bg.ac.rs\\
}

% \date{\dateline{submission date}{acceptance date}\\
% \small Mathematics Subject Classifications: comma separated list of
% MSC codes available from http://www.ams.org/mathscinet/freeTools.html}

%\date{%\dateline{Jan 1, 2012}{Jan 2, 2012}\\
%\small Mathematics Subject Classifications: 16T05, 16T30, 05C65,
%05E45}

\begin{document}

\maketitle

\begin{abstract}

Stanley and Grinberg introduced a symmetric function associated with digraphs and named it the Redei-Berge symmetric function \cite{S}. This function arises from a suitable combinatorial Hopf algebra on digraphs \cite{GS}, which made it possible to assign the Redei-Berge function to posets \cite{MS}. In this paper, we define a new combinatorial Hopf algebra of posets whose character is a close cousin of the character from \cite{MS}. Further, we investigate the properties of the symmetric function that arises from this algebra and explore its expansions in various natural bases of $QSym$ and $Sym$. Finally, we obtain an interesting method for decomposing a poset. 

\bigskip\noindent \textbf{Keywords}: poset, combinatorial Hopf algebra, symmetric functions, linear extension

\small \textbf{MSC2020}: 16T30, 06A07, 05E05  
\end{abstract}

\section{Introduction}

Over the years, many Hopf algebras and symmetric functions of posets have emerged. Some of them can be found in \cite{JR}, \cite{WRS}, \cite{E}, \cite{ABM}, \cite{BS} and \cite{MS}. This concept gives birth to a global algebraic framework for studying partially ordered sets.

%Redei's theorem says that if $X$ is a tournament, then the number of Hamiltonian paths of $X$ is odd \cite{LR}, while Berge's theorem says that if $X$ is a simple digraph and $\overline{X}$ is its complement, then the number of Hamiltonian paths of $\overline{X}$ is congruent to the same number for $X$ modulo $2$ \cite{CB}. 

The Redei-Berge symmetric function is a recently introduced symmetric function assigned to digraphs \cite{S}. Its expansion in the power sum basis easily yields two beautiful theorems from combinatorics - Redei's theorem \cite{LR} and Berge's theorem \cite{CB}. In \cite{GS}, the authors proved that this function arises from a suitable combinatorial Hopf algebra of digraphs. This reconceptualization made it possible to define the Redei-Berge function for posets by defining a combinatorial Hopf algebra of posets compatible with the one on digraphs \cite{MS}. Its character counts the number of quasi-linear extensions of a poset, which is a starting point for the research in this paper.

In this paper, we introduce one new symmetric function associated with posets. This function is induced by an appropriate combinatorial Hopf algebra whose character counts the number of linear extensions of a poset. It turns out that this character is connected to the Euler character of a generalization of the well-known Rota's combinatorial Hopf algebra from \cite{JR} and \cite{ABS}. This connection  yields a special way to decompose a poset. Furthermore, we explore the combinatorial interpretation of the coefficients in the expansion of this function in many natural bases of $Sym$ and $QSym$. Ultimately, in the last section, we propose some conjectures that are yet to be answered.

\section{Preliminaries}

In this section, we review some basic notions and facts. A {\it set partition} $\pi=\{V_1,\ldots,V_{l(\pi)}\}\vdash V$ of length $l(\pi)$
of a finite set $V$ is a set of disjoint nonempty subsets with $V_1\cup\ldots\cup V_{l(\pi)}=V$. A {\it set composition} $\sigma=(V_1,\ldots,V_{l(\sigma)})\models V$ is an ordered partition. 

A \textit{composition} $\alpha\models n$ of length $l(\alpha)$, where $n\in\mathbb{N}$, is a sequence $\alpha=(\alpha_1,\ldots,\alpha_{l(\alpha)})$ of
positive integers with $\alpha_1+\cdots+\alpha_{l(\alpha)}=n$. We say that $n$ is the weight of $\alpha$ and write $n=|\alpha|$. The set of all compositions of $n$ will be denoted by $\mathrm{Comp}(n)$.
The \textit{type of a set composition} $\sigma=(V_1,\ldots,V_{l(\sigma)})\models V$ is the composition $\mathrm{type}(\sigma)=(|V_1|,\ldots,|V_{l(\sigma)}|)\models |V|$. 
A \textit{partition} $\lambda\vdash n$ is a
composition $(\lambda_1,\ldots,\lambda_{l(\lambda)})\models n$ such that $\lambda_1\geq \lambda_2\geq\cdots\geq
\lambda_{l(\lambda)}$. We denote by $P(n)$ the set of all partitions of $n$. The \textit{diagram} of a composition $\alpha=(\alpha_1, \ldots, \alpha_{l(\alpha)})$ is a collection of cells justified on the left such that the $i$th row contains $\alpha_i$ cells. We enumerate the rows starting from the top.

There is a bijection  $\mathrm{set}: \mathrm{Comp}(n)\rightarrow2^{[n-1]}$ given by
$\mathrm{set}: (\alpha_1,\ldots,\alpha_{l(\alpha)})\mapsto\{\alpha_1,\alpha_1+\alpha_2,\ldots, \alpha_1+\cdots+\alpha_{l(\alpha)-1}\}$. The inverse of this bijection is denoted by $I\mapsto\mathrm{comp}(I)$. We say that composition $\alpha\models n$ is \textit{finer} than composition  $\beta\models n$, and write $\alpha\leq\beta$, if $\mathrm{set}(\beta)\subseteq\mathrm{set}(\alpha)$. Equivalently, parts of $\beta$ are obtained by summing some adjacent parts of $\alpha$.

If $V$ is a set with $n$ elements, a $V-$\textit{listing} is a list of all elements of $V$ with no repetitions, i.e. a bijective map $\omega:[n]\rightarrow V$. We write $\Sigma_V$ for the set of all $V$-listings. A \textit{reversion} of a $V$-listing $\omega=(\omega_1,\ldots,\omega_n)\in\Sigma_V$ is a $V$-listing $\omega^\mathrm{rev}=(\omega_n,\ldots,\omega_1)$. For a subset $I\subseteq[n-1]$ we denote by $I^{\mathrm{op}}=\{n-i|i\in I\}$ its \textit{opposite set}. If $\alpha\models n$, we write $\alpha^{\mathrm{op}}$ for a composition of $n$ whose corresponding set is $\mathrm{set}(\alpha)^{\mathrm{op}}$. In other words, if $\alpha=(\alpha_1, \ldots, \alpha_{l(\alpha)})$, then $\alpha^{\mathrm{op}}=(\alpha_{l(\alpha)}, \ldots, \alpha_1)$.

Throughout this paper, $\mathbf{k}$ will denote a field of characteristic 0. A {\it combinatorial Hopf algebra}, CHA
for short, $(\mathcal{H},\zeta)$  over a field $\mathbf{k}$ is a graded, connected Hopf
algebra $\mathcal{H}=\oplus_{n\geq 0}\mathcal{H}_n$ over $\mathbf{k}$ together with
a $\mathbf{k}-$algebra homomorphism
$\zeta:\mathcal{H}\rightarrow\mathbf{k}$ called the {\it character}. The theory of combinatorial Hopf algebras is founded in \cite{ABS}.

The \textit{convolution product} $\zeta_1\zeta_2$ of two characters $\zeta_1, \zeta_2: \mathcal{H}\rightarrow\mathbf{k}$ is given by \[
\mathcal{H}\xrightarrow{\Delta_{\mathcal{H}}}\mathcal{H}\otimes\mathcal{H}\xrightarrow[]{\zeta_1\otimes\zeta_2}\mathbf{k}\otimes\mathbf{k}\xrightarrow[]{m_\mathbf{k}} \mathbf{k},
\] where $\Delta_\mathcal{H}$ is the coproduct of $\mathcal{H}$ and $m_\mathbf{k}$ is the product of the base field. For a character $\zeta$, its \textit{conjugate character} $\overline{\zeta}$ is defined as $\overline{\zeta}(h)=(-1)^n\zeta(h)$ if $h\in\mathcal{H}_n$ is a homogeneous element of degree $n$. To $\zeta$, we also assign its \textit{Euler character} $\chi=\zeta\overline{\zeta}$ and its \textit{Möbius} character $\zeta^{-1}=\zeta\circ S$, where $S$ is the antipode of $\mathcal{H}$. If $\varphi, \psi$ are two characters on $\mathcal{H}$, with $S(\varphi, \psi)$ will be denoted the largest graded subcoalgebra such that $\forall h\in S(\varphi, \psi)$, $\varphi(h)=\psi(h)$. In particular, $S_+(\mathcal{H}, \zeta)=S(\overline{\zeta}, \zeta)$ and $S_-(\mathcal{H}, \zeta)=S(\overline{\zeta}, \zeta^{-1})$ are respectively called \textit{even} and \textit{odd subalgebra} of $(\mathcal{H}, \zeta)$. 

The terminal object in the category of CHAs is the
CHA of quasisymmetric functions $(QSym,\zeta_Q)$. For basics of
quasisymmetric functions see \cite{EC} and for the Hopf algebra structure on $QSym$ see \cite{ABS}. A composition $\alpha=(\alpha_1,\ldots,\alpha_k)\models n$ defines the \textit{monomial quasisymmetric function}

\[M_\alpha=\sum_{i_1<\cdots<
i_k}x_{i_1}^{\alpha_1}\cdots x_{i_k}^{\alpha_k}.\]  Alternatively, we write $M_I=M_{\mathrm{comp}(I)},\ I\subseteq[n-1]$. The collection of all monomial quasisymmetric functions forms a linear basis of $QSym$. Another basis of this space consists of the \textit{fundamental quasisymmetric functions }

\[
F_I=\sum_{\substack{1\leq i_1\leq i_2\leq\cdots\leq i_n\\
                  i_j<i_{j+1} \ \mathrm{for \ each} \ j\in I}}x_{i_1}x_{i_2}\cdots x_{i_n}, \quad I\subseteq[n-1],
\]
which are expressed in the monomial basis by
\[
F_I=\sum_{I\subseteq J}M_J, \quad I\subseteq[n-1].
\]

The unique canonical morphism
$\Psi:(\mathcal{H},\zeta)\rightarrow(QSym,\zeta_Q)$ for a CHA $(\mathcal{H}, \zeta)$ is given on
homogeneous elements with

\begin{equation}\label{canonical}
\Psi(h)=\sum_{\alpha\models n}\zeta_\alpha(h)M_\alpha, \
h\in\mathcal{H}_n,
\end{equation} where $\zeta_\alpha$ is the convolution product
\begin{equation}\label{generalcoeff}
\zeta_\alpha=\zeta_{\alpha_1}\cdots\zeta_{\alpha_k}:\mathcal{H}
\stackrel{\Delta^{(k-1)}}\longrightarrow\mathcal{H}^{\otimes
k}\stackrel{proj}\longrightarrow\mathcal{H}_{\alpha_1}\otimes\cdots\otimes\mathcal{H}_{\alpha_k}
\stackrel{\zeta^{\otimes k}}\longrightarrow\mathbf{k}.
\end{equation}

The \textit{principal specialization} $\mathrm{ps}^1:QSym\rightarrow\mathbf{k}[m]$ is an algebra homomorphism defined by
\[\mathrm{ps}^1(\Phi)(m)=\Phi(\underbrace{1,\ldots,1}_{\text{$m$ ones}},0,0,\ldots).\]

The \textit{reciprocity formula} expresses values of the principal specialization at negative integers
\begin{equation}\label{reciprocity}
\text{ps}^1(\Phi)(-m)=\text{ps}^1(S(\Phi))(m),
\end{equation} where $S$ is the antipode of $QSym$.
%The \textit{principal specialization} $\mathrm{ps}^1:QSym\rightarrow\mathbf{k}[m]$ is an algebra homomorphism to the polynomial algebra defined by
%\[\mathrm{ps}^1(\Phi)(m)=\Phi(\underbrace{1,\ldots,1}_{\text{$m$ ones}},0,0,\ldots).\]
%The reciprocity formula expresses values of the principal specialization at negative integers
%\begin{equation}\label{reciprocity}
%\text{ps}^1(\Phi)(-m)=\text{ps}^1(S(\Phi))(m),
%\end{equation}
%where $S$ is the antipode of $QSym$.

The CHA of symmetric functions $(Sym, \zeta_S)$ is the terminal object in the category of
cocommutative combinatorial Hopf algebras \cite{ABS}. This algebra consists of quasisymmetric functions that are invariant under the action of permutations on the set of variables. The theory of symmetric functions can be found in \cite{EC} and the details of the Hopf algebra structure on $Sym$ in \cite{ABS}. There are many bases of this vector space and the basis of our particular interest consists of monomial symmetric functions. For partition $\lambda$, \textit{monomial symmetric function} $m_\lambda$ is defined as \[m_\lambda=\sum_{\alpha\sim\lambda}M_\alpha,\] where the summation runs over all compositions $\alpha$ whose parts, when arranged in nonincreasing order, yield $\lambda$.
The second basis we will take advantage of consists of elementary symmetric functions. The $i$th \textit{elementary symmetric function} is given by 
\[e_0=1\hspace{5mm} \textrm{ and } \hspace{5mm}e_i=\sum_{j_1<j_2<\cdots<j_i}x_{j_1}x_{j_2}\cdots x_{j_i} \] for $i\geq 1$.
For partition $\lambda=(\lambda_1, \lambda_2, \ldots, \lambda_k)$, we define
\[e_{\lambda}=e_{\lambda_1}e_{\lambda_2}\cdots e_{\lambda_k}.\]
Another basis of our interest is 
the \textit{power sum basis}. The $i$th \textit{power sum symmetric function} is defined as
\[p_0=1 \hspace{5mm} \text{and} \hspace{5mm}p_i=\sum_{j=1}^{\infty}x_j^i\]
for $i\geq 1$. For partition $\lambda=(\lambda_1, \lambda_2, \ldots, \lambda_k)$, we define 

\[p_{\lambda}=p_{\lambda_1}p_{\lambda_2}\cdots p_{\lambda_k}.\] 
 On the space $Sym$, there is a well-known automorphism $\omega: Sym\rightarrow Sym$, defined on this basis by $\omega(p_n)=(-1)^{n-1}p_n$. Consequently, $\omega(p_\lambda)=(-1)^{|\lambda|-l(\lambda)}p_\lambda$. For partition $\lambda$, we define the \textit{complete symmetric function} $h_\lambda$ as \[h_\lambda=\omega(e_\lambda).\] The collection of all complete symmetric functions also forms a basis of $Sym$.
The final basis we will be interested in consists of \textit{Schur functions}. For $\lambda=(\lambda_1, \ldots, \lambda_k)$, we define the \textit{Schur function} $s_\lambda$ as \[s_\lambda=\mathrm{det}(h_{\lambda_i-i+j})_{i, j=1}^k.\] 

The space of symmetric functions can be equipped with a scalar product $\langle, \rangle$ defined with  $\langle m_\lambda, h_\mu\rangle=\delta_{\lambda, \mu}$ for every two partitions $\lambda, \mu$. Relative to this scalar product, Schur functions form an orthonormal basis, i.e. $\langle s_\lambda, s_\mu\rangle=\delta_{\lambda, \mu}.$

If $u=\{u_i\}$ is a basis of some vector space $V$, $v\in V$ is $u-$\textit{positive} if all the coefficients in the expansion $v=\sum c_i u_i$ are nonnegative. If these coefficients are integers, we say that $v$ is $u-$\textit{integral}. Additionally, we write $[u_i]v$ for the coefficient $c_i$ of $u_i$ in $u-$expansion of $v$.

A \textit{graph} $G$ is a pair $G=(V, E)$, where $V$ is a finite set and $E$ is a collection $E\subseteq \binom{V}{2}$ of unordered pairs of elements of $V$. Elements of $V$ are called \textit{vertices} and elements of $E$ are called \textit{edges}. 
%Two graphs $G=(V, E)$ and $H=(V', E')$ are \textit{isomorphic} if there is a bijection $f: V\rightarrow V'$ such that $\{u, v\}\in E\iff \{f(u), f(v)\}\in E'$.
We say that $G=(V, E)$ is \textit{discrete} if $E=\emptyset$ and that it is a \textit{clique} if $E=\binom{V}{2}$.

A {\it digraph} $X$ is a pair $X=(V,E)$, where $V$ is a finite set and $E$ is a collection
$E\subseteq V^2$ of ordered pairs of elements of $V$. Elements of $V$ are the \textit{vertices} and
elements of $ E$ are the \textit{edges} of $X$.

 Let $\mathcal{D}=\oplus_{n\geq0}\mathcal{D}_n$ be the graded vector space over the field $\mathbf{k}$, where each $\mathcal{D}_n$ is linearly spanned by the set of all digraphs with $[n]$ as the set of vertices. For digraphs $X=(V,E)\in \mathcal{D}_m$ and $Y=(V',E')\in\mathcal{D}_n$, the product $X\cdot Y$ is the digraph with the vertex set $[m+n]$ and with the set of directed edges defined as follows:
\begin{itemize}
\item  for $x, y\in[m]$, $(x, y)$ is an edge of $X\cdot Y$ if and only if $(x, y)\in E$
\item  for $x, y \in [n]$,  $(x+m, y+m)$ is an edge of $X\cdot Y$  if and only if $(x, y)\in E'$
\item for $x\in [m]$ and $y\in \{m+1, \ldots, m+n\}$, $(x, y)$ is an edge of $X\cdot Y$. 
\end{itemize}
 
 The \textit{restriction} of a digraph $X=(V,E)$ on a subset $S\subseteq V$ is the
digraph $X|_S=(S,E|_S)$, where $E|_S=\{(u,v)\in E\  |\ u,v\in S\}$. The graded comultiplication $\Delta:\mathcal{D}\rightarrow\mathcal{D}\otimes\mathcal{D}$ is defined as

\[\Delta(X)=\sum_{W\subseteq V}X|_{W}\otimes X|_{V\setminus W}.\] Evidently, this is a coassociative and a cocommutative operation. The unit element is $\emptyset$, while the counit is given by $\epsilon(\emptyset)=1$ and $\epsilon(X)=0$ otherwise. It is not difficult to see that with these operations, $\mathcal{P}$ becomes a graded connected Hopf algebra \cite{ABM}.

Let $P$ be a finite poset.  The relation that induces $P$ will be denoted as $\leq_P$ and the corresponding strict order relation as $<_P$. 
%Let $\mathrm{Min}(P)$ denote the set of all minimal elements of $P$ and $\mathrm{Max}(P)$ the set of all maximal elements of $P$.
For a $P$-listing $\omega=(\omega_1, \ldots, \omega_n)$, we say that it is a \textit{linear extension} of $P$ if there do not exist $i<j$ such that $\omega_{j}\leq_P\omega_i$. A \textit{quasi-linear extension} of $P$ is defined in \cite{MS} as a $P$-listing $(\omega_1, \ldots, \omega_n)$  such that $\omega_{i+1}\nleq_P \omega_i$ for all $i\in[n-1]$. Clearly, every linear extension is also a quasi-inear extension. We denote by $P^*$ the \textit{dual poset} of $P$, where $a\leq_{P^*}b$ if and only if $b\leq_P a$. The \textit{incomparability graph} of $P$, denoted as $\mathrm{inc}(P)$, is a graph with $P$ as the set of vertices, while $\{i, j\}$ is an edge of $\mathrm{inc}(P)$ if and only if $i$ and $j$ are incomparable in $P$.

Let $\mathcal{P}_n$ denote the $\textbf{k}$-span of all partial orders on $[n]$. The graded vector space $\mathcal{P}=\bigoplus_{n\geq 0}\mathcal{P}_n$ can become a graded Hopf algebra as follows. The graded multiplication $\mathcal{P}_m\otimes\mathcal{P}_n\rightarrow\mathcal{P}_{m+n}$ is the linear extension of the operation given by
 $$P\otimes Q\mapsto PQ \textrm{ for } P \textrm{ a poset on } [m] \textrm{ and $Q$ a poset on } [n]$$
where $PQ$ denotes the poset on $[m+n]$ with $\leq_{PQ}$ defined as follows: 
\begin{itemize}
\item  for $x, y\in[m]$, $x\leq_{PQ}y$ if and only if $x\leq_P y$
\item  for $x, y \in [n]$,  $x+m\leq_{PQ} y+m$ if and only if $x\leq_Q y$
\item for $x\in [m]$ and $y\in \{m+1, \ldots, m+n\}$, $x\leq_{PQ} y$. 
\end{itemize}
The poset $PQ$ defined above is known as the \textit{ordinal sum} of posets $P$ and $Q$. The unit is obviously the map $u:\mathbf{k}\rightarrow \mathcal{P}$ given by $1\mapsto \emptyset$, where $\emptyset$ is the unique poset on the empty set. A poset $P$ is \textit{irreducible} if it cannot be written as $P=P_1P_2$ for some two nonempty posets $P_1$ and $P_2$.

The \textit{standardization} of a finite set $A\subseteq\mathbb{N}$ is the unique order preserving map $\text{st}: A\rightarrow\{1, \ldots, |A|\}$. For a poset $P$ on some finite set $A\subseteq\mathbb{N}$, we define the \textit{standardization} of $P$, denoted as $\text{st}(P)$, as a poset on $\{1, \ldots, |A|\}$ obtained from $P$ via the standardization of $A$. The graded comultiplication on $\mathcal{P}$ is given by 
$$P\mapsto \sum_{S\subseteq [n]} \text{st}(P|_{S})\otimes \text{st}(P|_{[n]\setminus S}),$$ and linearly extended on 
$\mathcal{P}_{n}$. We define the counit by $\epsilon(\emptyset)=1$ and $\epsilon(P)=0$ otherwise. With operations defined in this way, $\mathcal{P}$ becomes a graded connected Hopf algebra that is cocommutative, but not commutative, see \cite{ABM}.

To any poset $P$, we can assign a digraph $D_P$ as follows. The set of vertices of $D_P$ is $P$, while $(i, j)$ is an edge of $D_P$ if and only if $i<_P j$. Moreover, it turns out that this assignment gives rise to an injective morphism of graded Hopf algebras $\mathcal{P}$ and $\mathcal{D}$ \cite{ABM}.

%Hopf algebra $\mathcal{D}$ is in \cite{GS} endowed with a character so that the obtained CHA produces the Redei-Berge function from \cite{S} as the image of the universal morphism. In \cite{MS}, the authors defined a new CHA structure on $\mathcal{P}$ by setting a character that counts the number of quasi-linear extensions of a poset. It turns out that the character on $\mathcal{P}$ from \cite{MS} and the character on $\mathcal{D}$ from \cite{GS} are compatible with the morphism $P\mapsto D_P$. A slight modification of this character is the starting point for the research in this paper.

\section{Generalization of Rota's Hopf algebra}

In this section, we slightly modify the construction of the well-known Rota's combinatorial Hopf algebra of posets \cite{JR}, \cite{ABS}. This construction will be very useful in the upcoming section. The terms we are about to introduce are originally defined only for posets with unique minimal and maximal element in which every maximal chain has the same length (the so-called \textit{chain condition}). However, we are going to define these concepts in a more general setting.

Let $P$ be a finite poset. \textit{Height} of such poset $P$, denoted as $h(P)$ is the length of the longest chain in $P$. For $p\in P$, we define $H_P(p)=\{q\in P\mid q\leq_Pp\}$ and $D_P(p)=\{q\in P\mid p\leq q \}$. The \textit{height} $h(p)$ of an element $p\in P$ is $h(p)=h(H_P(p))$, while its \textit{depth} $d(p)$ is $d(p)=h(D_P(p))$.

We say that a poset $P$ is a \textit{mountain} if it has a unique maximal element denoted as $1_P$. For a mountain $P$ of height $h$, its $\widetilde{f}-$\textit{vector} $\widetilde{f}(P)$ is the list $(f_0, \ldots, f_h)$, where $f_i$ is the number of elements of depth $i$. Clearly, if $P$ is a mountain, then $h(P)=h(1_P)$ and $f_0=1$.  

Let $\widetilde{\mathcal{R}}$ be a vector space over the field $\mathbf{k}$, whose basis is the set of all classes
of isomorphic finite mountains. $\widetilde{\mathcal{R}}$ is a graded algebra, where grading is given by the height of a poset and the product of two posets $P_1$ and $P_2$ is their Cartesian product $P_1\times P_2$. Clearly, the unit is a poset with one element, which we denote as $\{\star\}$.
We can define comultiplication on $\widetilde{\mathcal{R}}$ by \[\Delta(P)=\sum_{p\in P}H_P(p)\otimes D_P(p).\] Note that this map is coassociative and compatible with multiplication, which means that  $\Delta(P_1\times P_2)=\Delta(P_1)\times\Delta(P_2)$. This follows from the fact that $H_{P_1\times P_2}(p_1, p_2)=H_{P_1}(p_1)\times H_{P_2}(p_2)$ and $D_{P_1\times P_2}(p_1, p_2)=D_{P_1}(p_1)\times D_{P_2}(p_2)$. Unfortunately, comultiplication does not respect grading as shown in the following example. However, \[
    \Delta(\widetilde{\mathcal{R}}_n)\subseteq\bigoplus_{i+j\leq n}\widetilde{\mathcal{R}}_i\otimes\widetilde{\mathcal{R}}_j\]

\begin{example}
 Let $P$ be the poset whose digraph $D_P$ is shown in the diagram below.
    \begin{center}
   \begin{tikzcd}
                                     &              & 4 &               \\
                                     & 3 \arrow[ru] &   &               \\
1 \arrow[ru] \arrow[rruu, bend left] &              &   & 2 \arrow[luu]
\end{tikzcd}
    \end{center}
   $\Delta(P)=\{1\}\otimes\{1, 3, 4\}+\{2\}\otimes\{2, 4\}+\{1, 3\}\otimes\{3, 4\}+\{1, 2, 3, 4\}\otimes\{4\}$. However, $h(P)=2\neq 0+1=h(\{2\})+h(\{2, 4\})$.
\end{example}

Obviously, if we proposed the chain condition on our posets, we would avoid this problem. However, even then the adequate counit could not be defined since we do not have a unique minimal element. Therefore, we do not even have a coalgebra structure on $\widetilde{R}$. Additionally, the posets of our interest do not satisfy the chain condition.

Finally, we introduce the character $\zeta$ with $\zeta(P)=1$ for every poset $P$. Since $\widetilde{\mathcal{R}}$ is a graded algebra, it makes sense to define $\overline{\zeta}$ and $\chi=\zeta\overline{\zeta}$ in the same way as in the theory of combinatorial Hopf algebras. Clearly, \[\chi(P)=\zeta\overline{\zeta}(P)=\sum_{p\in P}(-1)^{d(p)}=\sum_{i=0}^{h(P)}(-1)^{i}f_i.\]

\subsection{Sets of border points}

In the sequel, we give an example of the construction we have made. This particular example will be of great value in the following section. 

A nonempty collection $\mathcal{A}$ of subsets of $[n-1]$ is a \textit{plucking} if $S\in\mathcal{A}$ and $S\subseteq T$ imply $T\in\mathcal{A}$. In other words, $\mathcal{A}$ is an upper set in $2^{[n-1]}$. Clearly, the collection of minimal sets in any plucking $\mathcal{A}$ uniquely determines $\mathcal{A}$. Note that any plucking contains $[n-1]$, the unique maximal element. Therefore, any plucking is a mountain. Hence, it makes sense 
to observe the value of the Euler character $\chi(\mathcal{A})$. Moreover, if $S\in\mathcal{A}$, then clearly $H_\mathcal{A}(S)=\{T\in \mathcal{A}\mid T\subseteq S\}$ is also a mountain with the unique maximal element $S$. In what follows, we write $\chi_{\mathcal{A}}(S)$ for $\chi(H_{\mathcal{A}}(S))$ and $\chi_\mathcal{A}$ for $\chi(\mathcal{A}).$ Note that $\chi_\mathcal{A}=\chi_\mathcal{A}([n-1]).$%$\chi_{\mathcal{A}}:\mathcal{A}\rightarrow\mathbb{Z}$ as \[\chi_\mathcal{A}(S)=\sum_{\substack{T\subseteq S\\
                 % T\in\mathcal{A}}}(-1)^{|S|-|T|}.\] 
%For $S=[n-1]$, we write $\chi_\mathcal{A}$ instead of $\chi_\mathcal{A}([n-1])$ since the summation runs through all sets in $\mathcal{A}$.
\begin{example}
    Let $\mathcal{A}=2^{[n-1]}$. Then, for any nonempty $S\in\mathcal{A}$, $\chi_\mathcal{A}(S)=\sum_{T\subseteq S}(-1)^{|S|-|T|}=(1-1)^{|S|}=0.$ Clearly, for $S=\emptyset$, $\chi_{\mathcal{A}}(S)=1.$
\end{example}

%\begin{example}
   % Let $\mathcal{A}=\{\{1\}, \{2\}, \{1, 2\}, \{1, 3\}, \{2, 3\}, \{1, 2, 3\}\}$. Explicit calculation shows that $\chi_{\mathcal{A}}(\{1\})=\chi_{\mathcal{A}}(\{2\})=1$, $\chi_{\mathcal{A}}(\{1, 2\})=-1$, while $\chi_{\mathcal{A}}(\{1, 3\})=\chi_{\mathcal{A}}(\{2, 3\})=\chi_{\mathcal{A}}(\{1, 2, 3\})=0$. 
%\end{example}

\begin{lemma} \label{nestajanje}
    Let $\mathcal{A}$ be a plucking with  minimal sets $M_1, \ldots, M_k$ and $S\in\mathcal{A}$. If there exists $i\in S\setminus\bigcup_{i=1}^{k}M_i$, then $\chi_\mathcal{A}(S)=0$.
\end{lemma}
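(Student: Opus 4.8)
The plan is to turn the Euler character $\chi_{\mathcal{A}}(S)$ into an explicit alternating sum over subsets of $S$, and then to kill that sum by a sign-reversing involution coming from the element $i$.

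First I would unwind the definition. Applying the formula $\chi(P)=\sum_{p\in P}(-1)^{d(p)}$ to the mountain $P=H_{\mathcal{A}}(S)$ gives $\chi_{\mathcal{A}}(S)=\sum_{T\in\mathcal{A},\,T\subseteq S}(-1)^{d(T)}$, where $d(T)$ denotes the depth of $T$ inside the poset $H_{\mathcal{A}}(S)$. The point is to evaluate $d(T)$ explicitly: since $\mathcal{A}$ is a plucking (an upper set), for any $T\in\mathcal{A}$ with $T\subseteq S$ \emph{every} set $U$ with $T\subseteq U\subseteq S$ already lies in $\mathcal{A}$. Hence $D_{H_{\mathcal{A}}(S)}(T)$ is the full Boolean interval $[T,S]$, whose longest chain has length $|S|-|T|$, so $d(T)=|S|-|T|$. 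This yields $\chi_{\mathcal{A}}(S)=\sum_{T\in\mathcal{A},\,T\subseteq S}(-1)^{|S|-|T|}$, which is consistent with the computation in the preceding example (the case $\mathcal{A}=2^{[n-1]}$).

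Next I would establish the ``$i$-insensitivity'' of membership in $\mathcal{A}$: if $i\in S\setminus\bigcup_{j=1}^{k}M_j$, then for every $T\subseteq[n-1]$ one has $T\in\mathcal{A}$ if and only if $T\,\triangle\,\{i\}\in\mathcal{A}$. Indeed, if $i\notin T$ then $T\subseteq T\cup\{i\}$ gives one implication by the upper-set property, and conversely a minimal set $M_j\subseteq T\cup\{i\}$ must in fact satisfy $M_j\subseteq T$ because $i\notin M_j$; the case $i\in T$ is symmetric. Consequently the map $T\mapsto T\,\triangle\,\{i\}$ is a fixed-point-free involution on $\{T\in\mathcal{A}:T\subseteq S\}$, and since it changes $|T|$ by $\pm1$ it flips the sign $(-1)^{|S|-|T|}$. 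Grouping the terms of the sum into these pairs makes it vanish, so $\chi_{\mathcal{A}}(S)=0$.

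The step that requires the most care is the first one: correctly identifying the depth function in $H_{\mathcal{A}}(S)$. One must observe that the relevant down-set $D_{H_{\mathcal{A}}(S)}(T)$ is the entire Boolean interval $[T,S]$ and not some sparser subposet of it --- this is precisely where the defining property of a plucking enters --- so that its height is simply $|S|-|T|$. Once the alternating-sum formula is in hand, the involution argument is routine.
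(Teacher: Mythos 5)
Your proof is correct and follows essentially the same route as the paper: the sign-reversing involution $T\mapsto T\,\triangle\,\{i\}$ on $\{T\in\mathcal{A}:T\subseteq S\}$, justified by the upper-set property and the fact that no minimal set contains $i$. The only difference is that you explicitly verify $d(T)=|S|-|T|$ (via the observation that the up-set of $T$ in $H_{\mathcal{A}}(S)$ is the full Boolean interval $[T,S]$) before writing $\chi_{\mathcal{A}}(S)$ as the alternating sum, a step the paper takes for granted; this is a welcome extra bit of care, not a different argument.
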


\begin{proof}
Let $T\in\mathcal{A}$ such that $T\subseteq S$. If $i\notin T$, then $T\cup\{i\}\in\mathcal{A}$ and $T\cup\{i\}\subseteq S$. On the other hand, if $i\in T$, then obviously $T\setminus\{i\}\subseteq S$. Since $T\in\mathcal{A}$, there exists $M_j$ such that $M_j\subseteq T$. Since $i\notin M_j$, $M_j\subseteq T\setminus \{i\}$ and therefore, $T\setminus\{i\}\in \mathcal{A}$ because $\mathcal{A}$ is a plucking. Hence, we can define an involution $f$ on $H_\mathcal{A}(S)=\{T\in \mathcal{A}\mid T\subseteq S\},$ with \[f(T) =
  \begin{cases}
  T\cup\{i\} & \text{ if } i\notin T \\
  T\setminus\{i\} & \text{ if } i\in T
  \end{cases}\]Since $T$ and $f(T)$ contribute to \[\chi_{\mathcal{A}}(S)=\sum_{\substack{T\subseteq S\\
                  T\in\mathcal{A}}}(-1)^{|S|-|T|}\] with opposite signs, they cancel each other out, which completes the proof.
\end{proof}

Let $\mathcal{A}$ be a plucking with minimal sets $M_1, \ldots, M_k$. The previous lemma tells us that the only sets $S\in\mathcal{A}$ that survive the action of $\chi_{\mathcal{A}}$ are the ones that are contained in $\bigcup_{i=1}^{k}M_i$. We say that a plucking is \textit{complete} if  $\bigcup_{i=1}^k M_i=[n-1]$. Note that for a plucking $\mathcal{A}$ that is not complete, $\chi_{\mathcal{A}}=0.$ However, there are complete pluckings $\mathcal{A}$ such that $\chi_\mathcal{A}=0$.

\begin{example} \label{kontraprimer}
    Let $\mathcal{A}=\{\{1, 2\}, \{1, 3\}, \{2, 4\}, \{1, 2, 3\}, \{1, 2, 4\}, \{1, 3, 4\},$
    \\$\{2, 3, 4\}, \{1, 2, 3, 4\}\}$. Then, $\mathcal{A}$ is a complete plucking, but $\chi_{\mathcal{A}}=1-1-1-1-1+1+1+1=0$.
\end{example}

In what follows, we give an example of a plucking that naturally emerges from the combinatorial structure of posets. This construction captures the compatibility of a listing $\omega\in\Sigma_P$ and the relation $\leq_P$ of a given poset $P$.

If $\omega\in\Sigma_P$, we say that a composition $\alpha=(\alpha_1, \ldots, \alpha_k)\models n$ is \textit{compatible} with $\omega$ if $(\omega_1, \ldots, \omega_{\alpha_1}), \ldots, (\omega_{\alpha_1+\cdots+\alpha_{k-1}+1}, \ldots, \omega_n)$ are linear extensions of adequate subposets. The set of all compositions that are compatible with $\omega$ will be denoted by $\mathrm{Comp}(P, \omega)$. For $\alpha\in \mathrm{Comp}(P, \omega)$, we call $\mathrm{set}(\alpha)$ \textit{set of border points} of $\omega$.  Clearly, if $\alpha\in \mathrm{Comp}(P, \omega)$ and $\beta\leq\alpha$, then $\beta\in\mathrm{Comp}(P, \omega)$ also.  In other words, the collection of all sets of border points of $\omega$ is a plucking. By abuse of notation, this collection is also denoted as $\mathrm{Comp}(P, \omega)$. Likewise, the set of all listings $\omega\in\Sigma_P$ such that composition $\alpha$ is compatible with $\omega$ is $\Sigma_P(\alpha)$, or equivalently $\Sigma_P(\mathrm{set}(\alpha))$.

\begin{example}
    If $\omega\in\Sigma_P$ is a linear extension of an $n$ element poset $P$, then $\mathrm{Comp}(P, \omega)=2^{[n-1]}$.
\end{example}

Since any $\mathrm{Comp}(P, \omega)$ is a plucking, we will be interested in finding minimal sets of border points of $\omega$, or equivalently, maximal compositions in $\mathrm{Comp}(P, \omega)$. Furthermore, we may calculate $\chi_{\mathrm{Comp}(P, \omega)}$ for any $\omega$. Motivated by Lemma \ref{nestajanje}, we will say that a $P-$listing $\omega$ is $P$-\textit{reversing} if $\mathrm{Comp}(P, \omega)$ is a complete plucking. This means that for every $i\in[n-1]$ there exists a minimal set of border points that contains $i$, which implies that for every $i\in[n-1]$, there exist $j\leq i<k$ such that $\omega_k\leq_P\omega_j$. In particular, $\omega_1$ is not a minimal element of $P$ and $\omega_n$ is not a maximal element of $P$. The previous example tells us that, in some sense, linear extensions are the furthest from being $P-$reversing listings since the only minimal set of border points for them is $\emptyset$. Note that $\chi_{\mathrm{Comp}(P, \omega)}\neq 0$, only if $\omega$ is $P-$reversing. The set of all $P-$reversing listings will be denoted as $\mathrm{Rev}(P)$.

\begin{example}
   Let $P$ be the poset whose digraph $D_P$ is shown in the diagram below.
    \begin{center}
   \begin{tikzcd}
  &                                                  &              & 4 \\
2 &                                                  & 3 \arrow[ru] &   \\
  & 1 \arrow[lu] \arrow[ru] \arrow[rruu, bend right] &              &  
\end{tikzcd}
    \end{center}
    If $\omega=(1, 4, 2, 3)$, maximal compositions in $\mathrm{Comp}(P, \omega)$ are $(3, 1)$ and $(2, 2)$. Equivalently, minimal sets of border points are $\{3\}$ and $\{2\}$. Therefore, this listing is not $P-$reversing (which could be seen immediately since $\omega_1=1$ is a minimal element of $P$).
\end{example}

If $P$ is not a chain, then $\mathrm{Comp}(P, \omega)$ is never a one-element plucking since we can get $[n-1]$ as the minimal set of border points of $\omega$ if and only if $\omega_1\geq_P\omega_2\geq_P\cdots\geq_P\omega_n$. One may be deceived by thinking that all the minimal sets from $\mathrm{Comp}(P, \omega)$ need to have the same cardinality. However, this is not true in general, as shown in the following example.

\begin{example}
    Let $P$ be the poset whose digraph $D_P$ is shown in the diagram below.
    \begin{center}
    \begin{tikzcd}
3           & 4           \\
1 \arrow[u] & 2 \arrow[u]
\end{tikzcd}
    \end{center}
    If $\omega=(4, 3, 2, 1)$, maximal compositions in $\mathrm{Comp}(P, \omega)$ are $(2, 2)$ and $(1, 2, 1)$. Therefore, minimal sets of border points are $\{2\}$ and $\{1, 3\}$ and hence, $\omega\in\mathrm{Rev}(P)$.
\end{example}

Clearly, if $i_1, \ldots, i_k$ are border points of $\omega$ in $P$, then $n-i_k, \ldots, n-i_1$ are border points of $\omega^{\mathrm{rev}}$ in $P^*$. In other words, $\alpha\in\mathrm{Comp}(P, \omega)$ if and only if $\alpha^{\mathrm{op}}\in\mathrm{Comp}(P^*, \omega^{\mathrm{rev}})$ and vice versa. For a collection $\mathcal{A}$ of some compositions of $n$, $\mathcal{A}^{\mathrm{op}}=\{\alpha^{\mathrm{op}}\mid \alpha\in\mathcal{A}\}$ and for a collection $\mathcal{B}$ of $P-$listings, $\mathcal{B}^{\mathrm{rev}}=\{\omega^{\mathrm{rev}}\mid \omega\in\mathcal{B}\}.$ With all this in mind, we obtain the following.

\begin{lemma} \label{lemica}
    For any poset $P$ and $\omega\in\Sigma_P$, $\mathrm{Comp}(P, \omega)=\mathrm{Comp}(P^{*}, \omega^{\mathrm{rev}})^{\mathrm{op}}.$ Equivalently, for any poset $P$ on $[n]$ and any $S\subseteq[n-1]$, $\Sigma_P(S)=\Sigma_{P^*}(S^{\mathrm{op}})^{\mathrm{rev}}.$
\end{lemma}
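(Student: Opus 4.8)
The plan is to verify the first identity directly from the definitions and then observe that the second identity is just a reformulation of the first via the $\mathrm{set}$ bijection. The statement is essentially a bookkeeping lemma, so the main work is unwinding the notation carefully; there is no genuine obstacle beyond that.

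First I would recall what it means for $\alpha=(\alpha_1,\dots,\alpha_k)\models n$ to be compatible with $\omega\in\Sigma_P$: writing $s_0=0$ and $s_j=\alpha_1+\cdots+\alpha_j$, the block $(\omega_{s_{j-1}+1},\dots,\omega_{s_j})$ must be a linear extension of the induced subposet $P|_{\{\omega_{s_{j-1}+1},\dots,\omega_{s_j}\}}$ for each $j$, i.e. there are no indices $s_{j-1}<a<b\le s_j$ with $\omega_b\le_P\omega_a$. Now apply this to $\omega^{\mathrm{rev}}=(\omega_n,\dots,\omega_1)$ in the dual poset $P^*$. The block structure of $\alpha^{\mathrm{op}}$ on the reversed listing is exactly the reversal of the block structure of $\alpha$ on $\omega$: the cut points of $\alpha$ are $\mathrm{set}(\alpha)\subseteq[n-1]$ and those of $\alpha^{\mathrm{op}}$ are $\mathrm{set}(\alpha)^{\mathrm{op}}=\{n-i\mid i\in\mathrm{set}(\alpha)\}$, so the $j$-th block of $(\omega^{\mathrm{rev}},\alpha^{\mathrm{op}})$ consists precisely of the entries of the $(k+1-j)$-th block of $(\omega,\alpha)$, written in reverse order. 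Thus a violation $\omega_b\le_P\omega_a$ inside some block of $(\omega,\alpha)$ corresponds, after reversing and dualizing, to a pair in the same block of $(\omega^{\mathrm{rev}},\alpha^{\mathrm{op}})$ with the later entry $\ge_{P^*}$ no, let me be careful: $\omega_b\le_P\omega_a$ with $b>a$ becomes, in the reversed listing, the two entries $\omega_a$ (later position) and $\omega_b$ (earlier position) with $\omega_b\le_P\omega_a$, i.e. $\omega_a\le_{P^*}\omega_b$, which is exactly a violation of the linear-extension condition for $P^*$ in that block. Hence $\alpha$ is compatible with $\omega$ in $P$ if and only if $\alpha^{\mathrm{op}}$ is compatible with $\omega^{\mathrm{rev}}$ in $P^*$, which gives $\mathrm{Comp}(P,\omega)=\mathrm{Comp}(P^*,\omega^{\mathrm{rev}})^{\mathrm{op}}$ as sets of compositions; passing to the associated pluckings via $\mathrm{set}$ (and using $(\alpha^{\mathrm{op}})$ corresponds to $\mathrm{set}(\alpha)^{\mathrm{op}}$) gives the plucking version.

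For the equivalent statement about $\Sigma_P(S)$, I would simply dualize the roles of listing and composition. Fix $S\subseteq[n-1]$ and let $\alpha=\mathrm{comp}(S)$. Then $\omega\in\Sigma_P(S)$ means $\alpha\in\mathrm{Comp}(P,\omega)$, which by the first part is equivalent to $\alpha^{\mathrm{op}}\in\mathrm{Comp}(P^*,\omega^{\mathrm{rev}})$, i.e. $\omega^{\mathrm{rev}}\in\Sigma_{P^*}(\mathrm{set}(\alpha^{\mathrm{op}}))=\Sigma_{P^*}(S^{\mathrm{op}})$. Since $\omega\mapsto\omega^{\mathrm{rev}}$ is an involution on $\Sigma_P=\Sigma_{P^*}$ (as sets, both being $V$-listings of the common vertex set), this says exactly $\omega\in\Sigma_P(S)\iff\omega^{\mathrm{rev}}\in\Sigma_{P^*}(S^{\mathrm{op}})$, i.e. $\Sigma_P(S)=\Sigma_{P^*}(S^{\mathrm{op}})^{\mathrm{rev}}$.

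The only place requiring any care is the sign/direction juggling in the middle step — making sure that reversing the listing and passing to the dual order interact correctly so that "no inversion in a block" is preserved rather than flipped to "every pair an inversion." I would present that as a single clean equivalence: for positions $a<b$ in the same $\alpha$-block, $\omega_b\le_P\omega_a$ holds iff, for the corresponding positions $a'<b'$ in the matching $\alpha^{\mathrm{op}}$-block of $\omega^{\mathrm{rev}}$ (where $a'$ and $b'$ are the images of $b$ and $a$ under position-reversal), $\omega^{\mathrm{rev}}_{b'}\le_{P^*}\omega^{\mathrm{rev}}_{a'}$ holds. Everything else is immediate from the definitions of $\alpha^{\mathrm{op}}$, $S^{\mathrm{op}}$, $\omega^{\mathrm{rev}}$, and the observation (already noted in the paragraph preceding the lemma) that border points of $\omega$ in $P$ correspond under $i\mapsto n-i$ to border points of $\omega^{\mathrm{rev}}$ in $P^*$.
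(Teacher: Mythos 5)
Your proof is correct and takes essentially the same route as the paper, which does not give a formal proof at all but simply records the observation (in the paragraph preceding the lemma) that border points of $\omega$ in $P$ correspond under $i\mapsto n-i$ to border points of $\omega^{\mathrm{rev}}$ in $P^*$; your write-up just carries out the block-by-block verification of that observation in detail, including the key check that reversing positions and dualizing the order together preserve the ``no inversion within a block'' condition.
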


\section{Linear function of a poset}

As we have already noted, the Redei-Berge function of a poset $P$ arises from a CHA of posets with the character that counts the number of quasi-linear extensions of $P$. In what follows, we investigate the properties of one function that can be seen as a cousin of the Redei-Berge function for reasons that will soon become clear.

Hopf algebra of posets $\mathcal{P}$ can be endowed with yet another character $\zeta$ as follows. For a poset $P$, let $\zeta(P)$ be the number of linear extensions of $P$. Clearly, if $(\omega_1, \ldots, \omega_n)$ is a linear extension of $P_1P_2$, then $\omega_1, \ldots, \omega_{|P_1|}\in P_1$ and $\omega_{|P_1|+1}, \ldots, \omega_n\in P_2$. Moreover, $(\omega_1, \ldots, \omega_{|P_1|})$ is a linear extension of $P_1$ and $(\omega_{|P_1|+1}, \ldots, \omega_n)$ is a linear extension of $P_2$. Therefore, $\zeta$ is multiplicative.

\begin{definition}
    Let $\Psi: (\mathcal{P}, \zeta)\rightarrow (QSym, \zeta_Q)$ be the unique canonical homomorphism of CHAs. We define the \textit{linear function} $L_P$ of a poset $P$ as $L_P=\Psi(P)$.
\end{definition}

The definition immediately implies that this function is multiplicative and that it does not distinguish $P$ from $P^*$ since $\zeta(P)=\zeta(P^*)$.

\begin{corollary}
    If $P_1$ and $P_2$ are posets, \[L_{P_1P_2}=L_{P_1}L_{P_2}.\]
\end{corollary}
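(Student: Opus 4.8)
The statement to prove is the corollary that $L_{P_1P_2}=L_{P_1}L_{P_2}$.

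The plan is to derive this as an immediate consequence of two facts already established in the excerpt: first, that the character $\zeta$ (counting linear extensions) is multiplicative with respect to the ordinal-sum product on $\mathcal{P}$, which was verified just before the definition of $L_P$; and second, that the canonical morphism $\Psi\colon(\mathcal{P},\zeta)\to(QSym,\zeta_Q)$ is a morphism of Hopf algebras, in particular an algebra homomorphism. Given these, the computation is formally trivial:
\[
L_{P_1P_2}=\Psi(P_1P_2)=\Psi(P_1\cdot P_2)=\Psi(P_1)\Psi(P_2)=L_{P_1}L_{P_2},
\]
where the middle equality uses that $P_1P_2$ is precisely the image of $P_1\otimes P_2$ under the multiplication of $\mathcal{P}$, and the third equality uses that $\Psi$ is multiplicative.

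The only point that deserves a sentence of justification is \emph{why} $\Psi$ is an algebra homomorphism. This follows from the universal property of $(QSym,\zeta_Q)$ as the terminal object in the category of combinatorial Hopf algebras (Aguiar--Bergeron--Sottile \cite{ABS}): the canonical map associated to any CHA is by construction a morphism in that category, hence respects products, coproducts, unit, and counit, and intertwines the characters. Alternatively, one can argue directly from formula \eqref{canonical} together with \eqref{generalcoeff}: since $\zeta$ is multiplicative and the coproduct $\Delta$ of $\mathcal{P}$ is an algebra map, each convolution power $\zeta_\alpha$ satisfies $\zeta_\alpha(P_1 P_2)=\sum_{\beta\gamma=\alpha}\zeta_\beta(P_1)\zeta_\gamma(P_2)$ (a standard consequence of the compatibility of product and coproduct), and then the product rule $M_\beta M_\gamma=\sum M_\alpha$ in $QSym$ reassembles $\Psi(P_1)\Psi(P_2)$ into $\Psi(P_1P_2)$. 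Either route is short; I would simply cite the universal property, since the excerpt has already set up $\Psi$ as "the unique canonical homomorphism of CHAs."

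There is essentially no obstacle here — the corollary is a one-line formal consequence. The substance was already carried out in the paragraph preceding the Definition, where multiplicativity of $\zeta$ was checked by observing that a linear extension of $P_1P_2$ splits as a concatenation of a linear extension of $P_1$ with one of $P_2$. If a self-contained proof is wanted, I would write: "Since $\zeta$ is multiplicative (as shown above) and $\Psi$ is a morphism of combinatorial Hopf algebras, it is in particular an algebra homomorphism; as $P_1P_2$ is the product of $P_1$ and $P_2$ in $\mathcal{P}$, we get $L_{P_1P_2}=\Psi(P_1P_2)=\Psi(P_1)\Psi(P_2)=L_{P_1}L_{P_2}$." That is the whole argument.
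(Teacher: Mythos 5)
Your proposal is correct and matches the paper's own (implicit) argument: the paper states the corollary follows immediately from the definition, relying on the multiplicativity of $\zeta$ established in the preceding paragraph and on $\Psi$ being an algebra homomorphism as the canonical morphism of CHAs. Your additional justification via the universal property of $(QSym,\zeta_Q)$ simply makes explicit what the paper leaves implicit.
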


\begin{corollary}
    If $P$ is a poset, \[L_P=L_{P^*}.\]
\end{corollary}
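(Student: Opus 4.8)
The plan is to upgrade the scalar identity $\zeta(P)=\zeta(P^*)$ to the statement that the linear involution $\dagger\colon\mathcal{P}\to\mathcal{P}$ defined by $P\mapsto P^*$ is a graded coalgebra endomorphism that fixes the character $\zeta$. Since the canonical morphism $\Psi$ from \eqref{canonical} is built only from the coproduct of $\mathcal{P}$ and $\zeta$, it will then follow formally that $\Psi$ cannot distinguish $P$ from $P^*$.

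First I would record the two compatibilities of $\dagger$ with the structure on $\mathcal{P}$. Compatibility with $\zeta$ is the observation already made in the text: $\omega\mapsto\omega^{\mathrm{rev}}$ is a bijection from the linear extensions of $P$ onto those of $P^*$, because the condition "$\omega_j\leq_P\omega_i$ for some $i<j$" is literally the condition for $\omega^{\mathrm{rev}}$ failing to be a linear extension of $P^*$; hence $\zeta\circ\dagger=\zeta$. Compatibility with the coproduct, $\Delta\circ\dagger=(\dagger\otimes\dagger)\circ\Delta$, follows from two elementary facts: restriction commutes with dualization, $(P|_S)^*=P^*|_S$, and standardization commutes with dualization, $\mathrm{st}(Q^*)=\mathrm{st}(Q)^*$ for a poset $Q$ on a finite subset of $\mathbb{N}$ (the standardization map of the underlying set does not see the order). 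Substituting these into $\Delta(P)=\sum_{S\subseteq[n]}\mathrm{st}(P|_S)\otimes\mathrm{st}(P|_{[n]\setminus S})$ gives $(\dagger\otimes\dagger)\Delta(P)=\sum_{S}\mathrm{st}(P^*|_S)\otimes\mathrm{st}(P^*|_{[n]\setminus S})=\Delta(P^*)$. Finally, $\dagger$ preserves the grading since $P^*$ has the same underlying set as $P$, so it commutes with the projections onto homogeneous components.

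With these in place the conclusion is purely formal. By \eqref{canonical}, $L_P=\sum_{\alpha\models n}\zeta_\alpha(P)M_\alpha$, and by \eqref{generalcoeff} the functional $\zeta_\alpha$ is the composite $\zeta^{\otimes k}\circ\mathrm{proj}\circ\Delta^{(k-1)}$. Each map in this composite commutes with $\dagger$ (resp. $\dagger^{\otimes k}$): the iterated coproduct because $\Delta$ does, the projection because $\dagger$ is graded, and $\zeta^{\otimes k}$ because $\zeta\circ\dagger=\zeta$. Hence $\zeta_\alpha(P^*)=\zeta_\alpha(P)$ for every $\alpha\models n$, and therefore $L_{P^*}=\sum_\alpha\zeta_\alpha(P^*)M_\alpha=\sum_\alpha\zeta_\alpha(P)M_\alpha=L_P$. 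I do not anticipate a genuine obstacle; the one point to watch is that $\dagger$ is \emph{not} an algebra map — it reverses ordinal sums, $(P_1P_2)^*=P_2^*P_1^*$ — so the argument must be phrased at the level of coalgebra-plus-character, not as "a morphism of combinatorial Hopf algebras". A purely bijective route is less convenient here: combining the identification $\zeta_\alpha(P)=|\Sigma_P(\mathrm{set}(\alpha))|$ (which unwinds directly from \eqref{generalcoeff} and the definition of $\zeta$) with Lemma~\ref{lemica} only yields the twisted identity $\zeta_\alpha(P)=\zeta_{\alpha^{\mathrm{op}}}(P^*)$, so the coalgebra argument above is cleaner.
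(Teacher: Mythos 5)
Your proof is correct and is essentially the paper's own argument made explicit: the paper dismisses the corollary with ``the definition immediately implies \dots since $\zeta(P)=\zeta(P^*)$,'' and what you supply is precisely the justification this relies on, namely that dualization is a graded coalgebra endomorphism fixing $\zeta$, so every $\zeta_\alpha$ in \eqref{canonical} is invariant under it. Your cautionary remarks (that $P\mapsto P^*$ reverses ordinal sums and so is not an algebra map, and that Lemma~\ref{lemica} alone only yields the twisted identity $\zeta_\alpha(P)=\zeta_{\alpha^{\mathrm{op}}}(P^*)$) are accurate but do not change the route.
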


This corollary suggests that the number of minimal and the number of maximal elements of $P$ cannot be uncovered from $L_P$.

\begin{definition}
    The \textit{linear polynomial} $l_P$ of a poset $P$ is the principal specialization of its linear function $L_P.$
\end{definition}

\begin{example}
    If $C_n$ is the chain on $[n]$, then $C_n=(C_1)^n$. Therefore, $L_{C_n}=(L_{C_1})^n=(M_{(1)})^n$ and consequently $l_P(m)=m^n.$
\end{example}

We mention that the definition of the linear function can be extended to the Hopf algebra of digraphs in accordance with the morphism $P\mapsto D_P$. Namely, if $D=(V, E)$ is a digraph, $\omega\in\Sigma_V$ is a linear extension of $D$ if there do not exist $i<j$ such that $(\omega_j,\omega_i)\in  E$. We define $\zeta(D)$ as the number of linear extensions of $D$. It should be clear that $\zeta$ is multiplicative, since any linear extension $\omega=(\omega_1, \ldots, \omega_n)$ of $D_1D_2$ uniquely splits to a linear extension $(\omega_1, \ldots\omega_{|D_1|})$ of $D_1$ and a linear extension $(\omega_{|D_1|+1}, \ldots, \omega_{n})$ of $D_2$. 

For a relation $R\subseteq V\times V$, its \textit{transitive closure}, denoted as $R_c$, is the smallest relation containing $R$ that is transitive. Clearly, if $D$ has nontrivial directed cycles, then $\zeta(D)=0$. On the other hand, if $D=(V, E)$ is acyclic, then its transitive closure $D_c=(V, E_c)$ can be seen as $D_P$ for some poset $P$. 

\begin{theorem}
    For any acyclic digraph $D$, $\zeta(D)=\zeta(D_c)$.
\end{theorem}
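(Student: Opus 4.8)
The plan is to prove the stronger statement that $D$ and $D_c$ have \emph{exactly the same} set of linear extensions; the equality $\zeta(D)=\zeta(D_c)$ of the counts is then immediate. One inclusion is essentially free: since $E\subseteq E_c$, the list of "forbidden" pairs that a linear extension of $D_c$ must avoid is a superset of those a linear extension of $D$ must avoid, so every $\omega\in\Sigma_V$ that is a linear extension of $D_c$ is automatically a linear extension of $D$.

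For the reverse inclusion I would fix a linear extension $\omega=(\omega_1,\ldots,\omega_n)\in\Sigma_V$ of $D$ and argue by contradiction that it is also a linear extension of $D_c$. If not, there are indices $i<j$ with $(\omega_j,\omega_i)\in E_c$. By definition of the transitive closure, this means there is a directed path $\omega_j=v_0\to v_1\to\cdots\to v_m=\omega_i$ in $D$ with $(v_t,v_{t+1})\in E$ for every $t$. Writing $\mathrm{pos}(v)$ for the position of the vertex $v$ in the listing $\omega$, the hypothesis that $\omega$ is a linear extension of $D$ forces $\mathrm{pos}(v_t)<\mathrm{pos}(v_{t+1})$ for each $t$ (an edge $(v_t,v_{t+1})\in E$ with $\mathrm{pos}(v_t)>\mathrm{pos}(v_{t+1})$ would be exactly a witness of non-extension). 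Chaining these inequalities along the path yields $j=\mathrm{pos}(\omega_j)=\mathrm{pos}(v_0)<\mathrm{pos}(v_m)=\mathrm{pos}(\omega_i)=i$, contradicting $i<j$. Hence the two sets of linear extensions coincide and $\zeta(D)=\zeta(D_c)$.

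There is no real obstacle here beyond bookkeeping the orientation convention consistently (an edge $(x,y)$ should always push $x$ before $y$ in every linear extension), and observing that acyclicity of $D$ is what guarantees $D_c=D_P$ for an actual poset $P$ so that the statement is meaningful — though, as the argument above shows, acyclicity is not needed for the set-equality itself, only for the interpretation of $D_c$ as the comparability digraph of a poset. I would also remark in passing that the same proof shows $\Sigma_D(S)=\Sigma_{D_c}(S)$ for every $S\subseteq[n-1]$, since compatibility of a composition with $\omega$ is defined blockwise via linear extensions of subdigraphs, and restriction commutes with taking transitive closures in the relevant sense; this gives, more strongly, that the linear function of $D$ (in the extension to $\mathcal{D}$ described above) equals that of $D_c$.
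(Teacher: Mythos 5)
Your proof of the theorem itself is correct and is essentially the paper's argument: both directions are handled the same way, the only cosmetic difference being that you chain the positional inequalities along the whole path to get $j<i$, whereas the paper stops at the first backward step $i_{l+1}<i_l$ and reads off the forbidden edge directly. Either form is fine.

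However, your closing remark --- that the same reasoning gives $\Sigma_D(S)=\Sigma_{D_c}(S)$ for all $S$ and hence $L_D=L_{D_c}$ --- is false, because restriction does \emph{not} commute with transitive closure in the way you need: one only has $(D|_W)_c\subseteq (D_c)|_W$, and the inclusion can be strict. Concretely, take $D$ on $\{1,2,3\}$ with edges $(1,2)$ and $(2,3)$; then $D|_{\{1,3\}}$ has no edges while $D_c|_{\{1,3\}}$ contains $(1,3)$, so the listing $(3,1,2)$ lies in $\Sigma_D((2,1))$ but not in $\Sigma_{D_c}((2,1))$, and indeed $[M_{(2,1)}]L_D=4\neq 3=[M_{(2,1)}]L_{D_c}$. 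So the theorem equates only the characters $\zeta(D)=\zeta(D_c)$ (the coefficient of $M_{(n)}$, so to speak), not the full linear functions; this is exactly why the paper phrases the conclusion as "it is enough to focus on posets" at the level of $\zeta$ rather than of $L$.
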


\begin{proof}
    Clearly, $\zeta(D_c)\leq\zeta(D)$ since any linear extension of $D_c$ is a linear extension of $D$. On the other hand, if we suppose that $\omega=(\omega_1, \ldots, \omega_n)$ is a linear extension of $D$ and that it is not a linear extension of $D_c$, then there exist $i<j$ such that $(\omega_j, \omega_i)$ is an edge of $D_c$. Therefore, there is a list $j=i_1, i_2, \ldots, i_k=i$ such that $(\omega_{i_l}, \omega_{i_{l+1}})$ is an edge of $D$ for every $l\in[k-1]$. Since $i=i_k<i_1=j$, there exists at least one $l\in[k-1]$ such that $i_{l+1}<i_l$. The fact that $(\omega_{i_l}, \omega_{i_{l+1}})$ is an edge of $D$ contradicts the fact that $\omega$ is a linear extension of $D$.
\end{proof}

In other words, for the sake of examining properties of the character $\zeta$ and the corresponding symmetric function, it is enough to focus on posets.

\subsection{Expansions of $L_P$}

In this subsection, we are going to explore the coefficients of the expansions of $L_P$ in various natural bases of $Qsym$ and $Sym$. These coefficients provide an abundance of interesting combinatorial connections.

The definition of the linear function together with Equation (\ref{canonical}), immediately yields \begin{equation}\label{kanonikal}
    L_P=\sum_{(P_1, \ldots, P_k)\models P}\zeta(P_1)\cdots\zeta(P_k)M_{(|P_1|, \ldots, |P_k|)}.
\end{equation}In other words, if $\alpha=(\alpha_1, \ldots, \alpha_k)\models n$, $[M_\alpha]L_P$ is actually the number of $P$-listings $\omega$ such that $\alpha\in \mathrm{Comp}(P, \omega)$, or equivalently, $\omega\in\Sigma_P(\alpha)$. 
\begin{theorem}\label{mrazvoj}
    If $P$ is a poset, then \[L_P=\sum_{\alpha\models n}\sum_{\omega\in\Sigma_P(\alpha)}M_{\alpha}=\sum_{\alpha\models n}|\Sigma_P(\alpha)|M_\alpha.\]
\end{theorem}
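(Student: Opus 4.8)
The plan is to start from Equation (\ref{kanonikal}), which already expresses $L_P$ as a sum over set compositions of the ground set of $P$, and to reorganize that sum according to the type of the set composition. Recall that, by the description of the iterated coproduct $\Delta^{(k-1)}$ on $\mathcal{P}$, the sum in (\ref{kanonikal}) ranges over ordered set partitions $(V_1,\ldots,V_k)\models[n]$, with $P_i$ standing for the standardized restriction $\mathrm{st}(P|_{V_i})$, the weight $\zeta(P_i)$ being the number of linear extensions of $\mathrm{st}(P|_{V_i})$, and the monomial index being $(|V_1|,\ldots,|V_k|)$. Collecting the terms that share a common composition $\alpha=(|V_1|,\ldots,|V_k|)$, it suffices to show that for each $\alpha=(\alpha_1,\ldots,\alpha_k)\models n$,
\[
\sum_{\substack{(V_1,\ldots,V_k)\models[n]\\ (|V_1|,\ldots,|V_k|)=\alpha}}\ \prod_{i=1}^{k}\zeta\big(\mathrm{st}(P|_{V_i})\big)=|\Sigma_P(\alpha)|,
\]
after which both asserted equalities follow at once, the rightmost one being purely formal.

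To establish this identity I would exhibit an explicit bijection. First note that the standardization map $V_i\to\{1,\ldots,|V_i|\}$ is an order isomorphism, so the $V_i$-listings that are linear extensions of $P|_{V_i}$ correspond bijectively to the linear extensions of $\mathrm{st}(P|_{V_i})$; hence $\zeta(\mathrm{st}(P|_{V_i}))$ counts exactly the linear extensions of $P|_{V_i}$ regarded as $V_i$-listings. Consequently the left-hand side above counts tuples $\big((V_1,\ldots,V_k),(\ell_1,\ldots,\ell_k)\big)$ in which $(V_1,\ldots,V_k)\models[n]$ has type $\alpha$ and each $\ell_i$ is a linear extension of $P|_{V_i}$. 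The map sends such a tuple to the $P$-listing $\omega$ obtained by concatenating $\ell_1,\ldots,\ell_k$; the inverse cuts a given $\omega=(\omega_1,\ldots,\omega_n)$ into consecutive blocks of lengths $\alpha_1,\ldots,\alpha_k$, and takes $V_i$ to be the underlying set of the $i$-th block and $\ell_i$ to be that block. The only thing to check is that the concatenated $\omega$ lies in $\Sigma_P(\alpha)$ precisely when each $\ell_i$ is a linear extension of $P|_{V_i}$ — and this is verbatim the definition of $\alpha$ being compatible with $\omega$, with the ``adequate subposets'' being exactly the $P|_{V_i}$. Since the two constructions are visibly mutually inverse, the displayed count equals $|\Sigma_P(\alpha)|$.

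I do not expect a genuine obstacle. The only points requiring minor care are the bookkeeping around standardization — namely that $\zeta$ applied to $\mathrm{st}(P|_{V_i})$ agrees with the number of linear extensions of the unstandardized restriction $P|_{V_i}$ — and the identification of the block structure in the definition of $\mathrm{Comp}(P,\omega)$ with the ordered set partition produced by $\Delta^{(k-1)}$. Everything else is the formal passage from (\ref{canonical})–(\ref{generalcoeff}) to (\ref{kanonikal}), already recorded in the excerpt, followed by the trivial reindexing of the sum.
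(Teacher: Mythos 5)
Your proposal is correct and follows essentially the same route as the paper: the paper derives Equation (\ref{kanonikal}) from the canonical morphism formula and then simply remarks that $[M_\alpha]L_P$ counts the $P$-listings $\omega$ with $\alpha\in\mathrm{Comp}(P,\omega)$, which is exactly the concatenation/cutting bijection you spell out. Your version just makes explicit the bookkeeping (standardization, grouping set compositions by type) that the paper leaves implicit.
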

\begin{example}
    If $T_n$ is the trivial poset on $[n]$, then for $\alpha\models n$, any $\omega\in\Sigma_P$ is also in $\Sigma_P(\alpha)$. Therefore, $L_{T_n}=n!\sum_{\alpha\models n}M_{\alpha}.$
\end{example}

 For a set $P=\{p_1, \ldots, p_n\}$ and $f: P\rightarrow\mathbb{N}$, we write $x_f$ for the product $x_{f(p_1)}\cdots x_{f(p_n)}$. If $P$ is a poset, we define $\Sigma_P(f)$ as the set of all $P-$listings $\omega\in \Sigma_P$ such that $f(\omega_1)\leq f(\omega_2)\leq\cdots\leq f(\omega_n)$ and such that $\omega$ is a linear extension on the monochromatic parts of $f$. The sublisting of $\omega$ with elements from $f^{-1}[i]$ is called the $i$th \textit{monochromatic block} of $\omega$. Note that \[|\Sigma_P(f)|=\prod_{i\in f[P]}\zeta(P|_{f^{-1}[i]}).\]
 Theorem \ref{mrazvoj}  easily yields another interpretation of $L_P$.

 \begin{corollary}
If $P$ is a poset, then \[L_P=\sum_{f:P\rightarrow\mathbb{N}}\sum_{\omega\in \Sigma_P(f)}x_f=\sum_{f: P\rightarrow\mathbb{N}}|\Sigma_P(f)|x_f.\]    
 \end{corollary}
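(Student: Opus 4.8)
The plan is to start from the $M$-expansion in Theorem~\ref{mrazvoj} and re-sum it by grouping compositions according to the associated set function. Given a $P$-listing $\omega$ and a composition $\alpha=(\alpha_1,\ldots,\alpha_k)\models n$ with $\alpha\in\mathrm{Comp}(P,\omega)$, the monomial $M_\alpha$ is $\sum_{i_1<\cdots<i_k}x_{i_1}^{\alpha_1}\cdots x_{i_k}^{\alpha_k}$; each summand here corresponds to a weakly increasing function $f:P\to\mathbb{N}$ obtained by assigning the value $i_1$ to the first $\alpha_1$ entries of $\omega$, the value $i_2$ to the next $\alpha_2$ entries, and so on. The key observation is that, under this correspondence, the pair $(\omega,\alpha,\text{choice of }i_1<\cdots<i_k)$ maps to a pair $(\omega,f)$ with $f(\omega_1)\le\cdots\le f(\omega_n)$, and $\alpha\in\mathrm{Comp}(P,\omega)$ holds precisely when $\omega$ restricts to a linear extension on each of the blocks cut out by $\alpha$; since the level sets $f^{-1}[i]$ are unions of such blocks, this is exactly the condition $\omega\in\Sigma_P(f)$.

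First I would make the correspondence precise in both directions. Starting from $(\omega,f)$ with $\omega\in\Sigma_P(f)$ and $f[P]=\{i_1<\cdots<i_k\}$, define $\alpha_j=|f^{-1}[i_j]|$; then $\alpha=(\alpha_1,\ldots,\alpha_k)\models n$, and since $\omega$ is a linear extension on each level set of $f$ and these level sets are consecutive blocks of $\omega$ (because $f$ is weakly increasing along $\omega$), we get $\alpha\in\mathrm{Comp}(P,\omega)$, i.e. $\omega\in\Sigma_P(\alpha)$. Conversely, from $(\omega,\alpha,i_1<\cdots<i_k)$ with $\omega\in\Sigma_P(\alpha)$, define $f$ by the recipe above; then $f$ is weakly increasing along $\omega$ and its level sets are exactly the blocks of $\alpha$ (or unions of consecutive ones if some $i_j$ coincided — but they are strictly increasing, so no coincidence occurs), hence $\omega$ is a linear extension on each level set and $\omega\in\Sigma_P(f)$. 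These two maps are mutually inverse, and under them $x_f=x_{i_1}^{\alpha_1}\cdots x_{i_k}^{\alpha_k}$.

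With the bijection in hand, the computation is routine: expanding $M_\alpha$ into its monomials and using Theorem~\ref{mrazvoj},
\[
L_P=\sum_{\alpha\models n}\sum_{\omega\in\Sigma_P(\alpha)}M_\alpha
=\sum_{\alpha\models n}\sum_{\omega\in\Sigma_P(\alpha)}\ \sum_{i_1<\cdots<i_{l(\alpha)}}x_{i_1}^{\alpha_1}\cdots x_{i_{l(\alpha)}}^{\alpha_{l(\alpha)}}
=\sum_{f:P\to\mathbb{N}}\ \sum_{\omega\in\Sigma_P(f)}x_f,
\]
where the last equality is the re-indexing provided by the bijection. The second displayed equality in the statement is then just the definition $|\Sigma_P(f)|=\sum_{\omega\in\Sigma_P(f)}1$, together with the factorization $|\Sigma_P(f)|=\prod_{i\in f[P]}\zeta(P|_{f^{-1}[i]})$ noted just before the corollary, which guarantees the sum over $f$ is well-defined (only finitely many $f$ with a fixed image contribute, and the coefficient of each monomial is finite).

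The only subtlety, and the step I would be most careful about, is checking that the compatibility condition $\alpha\in\mathrm{Comp}(P,\omega)$ translates exactly to ``$\omega$ is a linear extension on the monochromatic parts of $f$'' and not to something weaker or stronger. The point is that $\mathrm{Comp}(P,\omega)$ is a plucking (closed upward), so if $\omega$ restricts to a linear extension on each block of the \emph{finer} composition $\alpha$, it certainly does so on each block of any coarser composition; applied to the composition whose blocks are the level sets of $f$ — which is coarser than $\alpha$ exactly because the $i_j$ are distinct and increasing — this gives $\omega\in\Sigma_P(f)$. Conversely the level-set composition is itself a valid choice of $\alpha$, so nothing is lost. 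This monotonicity is what makes the two directions match up cleanly, and once it is spelled out the corollary follows immediately from Theorem~\ref{mrazvoj}.
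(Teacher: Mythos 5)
Your argument is correct and is essentially the proof the paper has in mind: the paper offers no written proof, treating the corollary as immediate from Theorem \ref{mrazvoj} by expanding each $M_\alpha$ into its monomials and re-indexing the triples $(\omega,\alpha,i_1<\cdots<i_k)$ as pairs $(\omega,f)$, exactly as you do. One caution about your final paragraph: the plucking property of $\mathrm{Comp}(P,\omega)$ is closure under \emph{refinement} (upward closure in $2^{[n-1]}$ corresponds to passing to finer compositions), so your claim that being a linear extension on the blocks of a finer composition implies the same for a coarser one is backwards and false in general (take $P$ the chain $1<_P 2$, $\omega=(2,1)$, and compare $\alpha=(1,1)$ with $(2)$); your proof survives only because, as you already established in your second paragraph, the level sets of $f$ are \emph{equal} to the blocks of $\alpha$ since the $i_j$ are distinct, so no coarsening is ever actually invoked.
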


It turns out that the coefficients of the expansion of $L_P$ in the fundamental basis of $QSym$ are closely connected to the Euler character $\chi$ of $\widetilde{R}$.

\begin{theorem} \label{fkoeficijent}
    For any poset $P$ with $n$ elements and for any $I\subseteq [n-1]$, \[[F_I]L_P=\sum_{\omega\in\Sigma_P(I)}\chi_{\mathrm{Comp}(P, \omega)}(I).\]
\end{theorem}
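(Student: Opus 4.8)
The plan is to pass through the monomial basis, use Möbius inversion on the Boolean lattice $2^{[n-1]}$ to rewrite things in the fundamental basis, and then recognize the resulting alternating sums as Euler characters of the pluckings $\mathrm{Comp}(P,\omega)$.

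First I would record the monomial coefficients. By Theorem \ref{mrazvoj}, after identifying a composition $\alpha\models n$ with the set $\mathrm{set}(\alpha)\subseteq[n-1]$, we have $[M_J]L_P=|\Sigma_P(J)|$ for every $J\subseteq[n-1]$, and by construction $\Sigma_P(J)=\{\omega\in\Sigma_P\mid J\in\mathrm{Comp}(P,\omega)\}$, where $\mathrm{Comp}(P,\omega)$ denotes the plucking of sets of border points of $\omega$. Inverting the relation $F_I=\sum_{I\subseteq J}M_J$ over the Boolean lattice gives $M_I=\sum_{I\subseteq J}(-1)^{|J|-|I|}F_J$; substituting this into $L_P=\sum_J|\Sigma_P(J)|M_J$ and collecting the coefficient of $F_I$ yields
\[
[F_I]L_P=\sum_{J\subseteq I}(-1)^{|I|-|J|}\,|\Sigma_P(J)|.
\]

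Next I would expand $|\Sigma_P(J)|$ as a count over listings and interchange the order of summation:
\[
[F_I]L_P=\sum_{\omega\in\Sigma_P}\ \sum_{\substack{J\subseteq I\\ J\in\mathrm{Comp}(P,\omega)}}(-1)^{|I|-|J|}.
\]
For a fixed $\omega$, the decisive point is that $\mathrm{Comp}(P,\omega)$ is a plucking, i.e.\ an upper set in $2^{[n-1]}$: if $I\notin\mathrm{Comp}(P,\omega)$ then no $J\subseteq I$ can belong to $\mathrm{Comp}(P,\omega)$ (otherwise $I\supseteq J$ would force $I\in\mathrm{Comp}(P,\omega)$), so the inner sum is empty. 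Hence only the listings $\omega\in\Sigma_P(I)$ contribute. For such an $\omega$, the set $I$ is the unique maximal element of the mountain $H_{\mathrm{Comp}(P,\omega)}(I)=\{J\in\mathrm{Comp}(P,\omega)\mid J\subseteq I\}$, and since $\mathrm{Comp}(P,\omega)$ is an upper set, for each such $J$ the entire Boolean interval $[J,I]$ lies inside it, so the depth of $J$ in this mountain equals $|I|-|J|$. Therefore the inner sum is exactly $\sum_{p\in H_{\mathrm{Comp}(P,\omega)}(I)}(-1)^{d(p)}=\chi\bigl(H_{\mathrm{Comp}(P,\omega)}(I)\bigr)=\chi_{\mathrm{Comp}(P,\omega)}(I)$, and the stated formula follows.

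The basis change and the swap of summations are routine; the one genuinely conceptual step, and the one I would be most careful about, is the last paragraph: verifying that listings $\omega$ with $I\notin\mathrm{Comp}(P,\omega)$ contribute nothing, and that for $\omega\in\Sigma_P(I)$ the depth of $J$ inside the mountain $H_{\mathrm{Comp}(P,\omega)}(I)$ is precisely $|I|-|J|$ — both of which rest on the fact that a plucking is an upper set, so the comparability intervals occurring here are full Boolean intervals, exactly matching the sign convention in the definition of the Euler character $\chi$ on $\widetilde{\mathcal{R}}$.
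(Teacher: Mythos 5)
Your proposal is correct and follows essentially the same route as the paper: expand $L_P$ in the monomial basis via Theorem \ref{mrazvoj}, invert $F_I=\sum_{I\subseteq J}M_J$ over the Boolean lattice, interchange sums, and identify the resulting alternating sum over $\{J\subseteq I\mid J\in\mathrm{Comp}(P,\omega)\}$ with $\chi_{\mathrm{Comp}(P,\omega)}(I)$. The only difference is cosmetic: you make explicit the verification that listings with $I\notin\mathrm{Comp}(P,\omega)$ contribute nothing and that the depth of $J$ in the mountain $H_{\mathrm{Comp}(P,\omega)}(I)$ equals $|I|-|J|$, points the paper leaves implicit in its use of the formula $\chi_{\mathcal{A}}(S)=\sum_{T\subseteq S,\,T\in\mathcal{A}}(-1)^{|S|-|T|}$.
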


\begin{proof}
We start with the expansion given in Theorem \ref{mrazvoj}:
\[L_P=\sum_{\alpha\models n}\sum_{\omega\in\Sigma_P(\alpha)}M_{\alpha}=\sum_{\omega\in\Sigma_P}\sum_{\alpha\in\mathrm{Comp}(P, \omega)}M_{\alpha}=\]\[=\sum_{\omega\in\Sigma_P}\sum_{\alpha\in\mathrm{Comp}(P, \omega)}\sum_{\beta\leq\alpha}(-1)^{|\mathrm{set}(\beta)|-|\mathrm{set}(\alpha)|}F_{\beta}.\]
The fact that $\beta\leq\alpha\in\mathrm{Comp}(P, \omega)$ implies that $\beta\in\mathrm{Comp}(P, \omega)$ as well. Equivalently, $\omega\in\Sigma_P(\beta)$. By rearranging the sums of the previous expression, we obtain \[ 
L_P=\sum_{\beta\models n}\sum_{\omega\in\Sigma_P(\beta)}\sum_{\substack{\alpha\in\mathrm{Comp}(P, \omega)\\    \beta\leq\alpha}}(-1)^{|\mathrm{set}(\beta)|-|\mathrm{set}(\alpha)|}F_\beta.
\]
If we set $I=\mathrm{set}(\beta)$ and $J=\mathrm{set}(\alpha)$, we get 
\begin{equation}\label{frazvoj}
L_P=\sum_{I\subseteq [n-1]}\sum_{\omega\in\Sigma_P(I)}\sum_{\substack{J\subseteq I\\
                  J\in\mathrm{Comp}(P, \omega) }}(-1)^{|I|-|J|}F_I.
\end{equation}
Therefore,
\[[F_I]L_P=\sum_{\omega\in \Sigma_P(I)}\sum_{\substack{J\subseteq I\\
                  J\in\mathrm{Comp}(P, \omega) }}(-1)^{|I|-|J|}=\sum_{\omega\in\Sigma_P(I)}\chi_{\mathrm{Comp}(P, \omega)}(I).\]\end{proof}

In what follows, we try to find the coefficients of $L_P$ in the power sum basis. We start with a lemma that covers only a few special cases, but it has some interesting consequences. In the proof of this lemma, we will exploit the result given in Equation (\ref{kanonikal}).

\begin{lemma} \label{ppocetak}
    If $P$ is a nonempty poset and $\mathrm{inc}(P)=(P, E)$ its incomparability graph, then 

    a) $[p_{(1, \ldots, 1)}]L_P=1$

    b) $[p_{(2, 1, \ldots, 1)}]L_P=|E|$

%    c) $[p_{(3, 1, \ldots, 1)}]=2T_{\mathrm{inc}(P)}$, where $T_{\mathrm{inc}(P)}$ denotes the number of triangles in $\mathrm{inc}(P)$.
\end{lemma}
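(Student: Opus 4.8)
The plan is to read off the two power-sum coefficients of $L_P$ by comparing, on the two sides of $L_P=\sum_{\lambda\vdash n}c_\lambda p_\lambda$, the coefficients of one or two well-chosen monomials in the variables $x_i$. The input on the $L_P$-side is the monomial form $L_P=\sum_{f\colon P\to\mathbb{N}}|\Sigma_P(f)|\,x_f$ (the corollary to Theorem~\ref{mrazvoj}, a convenient repackaging of Equation~(\ref{kanonikal})); the input on the power-sum side is the elementary fact that $p_\lambda$ has very few nonzero coefficients at monomials that are squarefree except for at most one repeated variable.

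For a), I would extract the coefficient of the squarefree monomial $x_1x_2\cdots x_n$. On the power-sum side, $[x_1\cdots x_n]p_\lambda=0$ as soon as $\lambda$ has a part $\ge 2$ (that part forces a repeated variable), whereas $[x_1\cdots x_n]p_1^{\,n}=n!$; hence $[x_1\cdots x_n]L_P=n!\,c_{(1^n)}$. On the $x_f$-side, a map $f\colon P\to\mathbb{N}$ contributes to $x_1\cdots x_n$ exactly when it is a bijection onto $\{1,\dots,n\}$, and then all monochromatic blocks are singletons, the linear-extension condition is vacuous, and the listing is forced, so $|\Sigma_P(f)|=1$; as there are $n!$ such bijections, $[x_1\cdots x_n]L_P=n!$. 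Comparing, $c_{(1^n)}=1$.

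For b), I would extract the coefficient of $x_1^2x_2x_3\cdots x_{n-1}$ (take $n\ge 2$). On the power-sum side only $\lambda=(1^n)$ and $\lambda=(2,1^{n-2})$ can contribute: any part $\ge 3$, or two parts each $\ge 2$, yields a monomial with a variable of exponent $\ge 3$, or with two variables of exponent $\ge 2$, neither matching; and one computes $[x_1^2x_2\cdots x_{n-1}]p_1^{\,n}=\binom{n}{2}(n-2)!=\tfrac{n!}{2}$ and $[x_1^2x_2\cdots x_{n-1}]p_2p_1^{\,n-2}=(n-2)!$, so the coefficient of $L_P$ equals $\tfrac{n!}{2}c_{(1^n)}+(n-2)!\,c_{(2,1^{n-2})}$. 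On the $x_f$-side, the colorings contributing to $x_1^2x_2\cdots x_{n-1}$ assign color $1$ to a pair $\{a,b\}\subseteq P$ and colors $2,\dots,n-1$ bijectively to the other $n-2$ elements; the only nontrivial constraint is that the color-$1$ block $\{a,b\}$ be ordered as a linear extension of $P|_{\{a,b\}}$, so $|\Sigma_P(f)|$ equals $2$ when $a,b$ are incomparable in $P$ and $1$ otherwise. Summing over the $\binom{n}{2}$ pairs and the $(n-2)!$ bijective colorings of the complement gives $[x_1^2x_2\cdots x_{n-1}]L_P=(n-2)!\bigl(\binom{n}{2}+|E|\bigr)$. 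Equating the two expressions, dividing by $(n-2)!$, and substituting $c_{(1^n)}=1$ from a) yields $c_{(2,1^{n-2})}=|E|$.

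The argument is bookkeeping rather than anything deep, so I expect no real obstacle; the two spots that need care are (i) that $L_P$ is a symmetric function — which the statement presupposes — so that the $p$-expansion exists and the coefficient of a single representative monomial $x_1^{\lambda_1}x_2^{\lambda_2}\cdots$ genuinely records $[m_\lambda]L_P$, and (ii) the bounded list of partitions $\lambda$ contributing to each chosen monomial, together with the small multinomial coefficients $\tfrac{n!}{2}$ and $(n-2)!$ attached to them. One could instead derive a) from a Hall-pairing computation, but the monomial-extraction route seems the most self-contained.
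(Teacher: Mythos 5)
Your proposal is correct and follows essentially the same route as the paper: both extract the coefficients of $x_1x_2\cdots x_n$ and $x_1^2x_2\cdots x_{n-1}$, note that only $p_{(1^n)}$ (resp.\ only $p_{(1^n)}$ and $p_{(2,1^{n-2})}$, with multiplicities $n!/2$ and $(n-2)!$) can contribute, and count the contributing colorings/set compositions on the $L_P$ side, where your total $(n-2)!\bigl(\binom{n}{2}+|E|\bigr)$ agrees with the paper's $2|E|(n-2)!+(\binom{n}{2}-|E|)(n-2)!$. The only cosmetic difference is that you phrase the $L_P$-side count via the monomial form $\sum_f|\Sigma_P(f)|x_f$ rather than directly via $[M_{(2,1,\ldots,1)}]L_P$.
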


\begin{proof} 
    
    Let $L_P=\sum c_\lambda p_\lambda$. The only $p_\lambda$ that contains monomial $x_1x_2\cdots x_n$ is $p_{(1, 1, \ldots, 1)}$. More precisely, $x_1x_2\cdots x_n$ appears $n!$ times in $p_{(1, 1, \ldots, 1)}$. Therefore, $n!c_{(1, 1, \ldots, 1)}=[x_1x_2\cdots x_n]L_P=[M_{(1, 1, \ldots, 1)}]L_P=n!$, which proves the first part.

    For the second part, we focus on the term $x_1^2x_2\cdots x_{n-1}$ that appears $\frac{n!}{2}$ times in $p_{(1, 1, \ldots, 1)}$  and $(n-2)!$ times in $p_{(2, 1, \ldots, 1)}$. Therefore, $\frac{n!}{2}+(n-2)!c_{(2, 1, \ldots, 1)}=[x_1^2x_2\cdots x_{n-1}]L_P=[M_{(2, 1, \ldots, 1)}]L_P$. In the partition $(P_1, \ldots, P_{n-1})$ of the set $P$ of type $(2, 1, \ldots, 1)$, the first block can consist of two incomparable elements or of two elements that are comparable in $P$. In the first case, there are two possible linear extensions of $P_1$ and in the second one, there is only one. Therefore, $[M_{(2, 1, \ldots, 1)}]L_P=2|E|(n-2)!+\left(\binom{n}{2}-|E|\right)(n-2)!$ and hence $\frac{n!}{2}+(n-2)!c_{(2, 1, \ldots, 1)}=2|E|(n-2)!+\left(\binom{n}{2}-|E|\right)(n-2)!$. Consequently, $c_{(2, 1, \ldots, 1)}=|E|$.
\end{proof}

\begin{corollary}
    If $P_1$ and $P_2$ are such that $L_{P_1}=L_{P_2}$, then $P_1$ and $P_2$ have the same number of incomparable pairs. Consequently, they have the same number of comparable pairs as well.
\end{corollary}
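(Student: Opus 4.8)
The plan is to derive this corollary directly from Lemma~\ref{ppocetak}. Suppose $L_{P_1}=L_{P_2}$. Since the transition matrix between the monomial basis of $QSym$ and the power sum basis of $Sym$ (when applied to symmetric functions) is invertible, two symmetric functions are equal if and only if all their coefficients in the power sum basis agree. In particular, $[p_{(2,1,\ldots,1)}]L_{P_1}=[p_{(2,1,\ldots,1)}]L_{P_2}$.

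Now I would invoke part b) of Lemma~\ref{ppocetak}: $[p_{(2,1,\ldots,1)}]L_{P_i}=|E_i|$, where $\mathrm{inc}(P_i)=(P_i,E_i)$ is the incomparability graph of $P_i$. (One must also note that $L_{P_1}=L_{P_2}$ forces $|P_1|=|P_2|=n$, since the degree of the symmetric function $L_P$ equals the number of elements of $P$; this is needed so that both posets contribute to the \emph{same} coefficient $p_{(2,1,\ldots,1)}$ of degree $n$.) Hence $|E_1|=|E_2|$, i.e. $P_1$ and $P_2$ have the same number of incomparable pairs.

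For the final sentence, I would simply observe that the total number of two-element subsets of an $n$-element poset is $\binom{n}{2}$, and each such pair is either comparable or incomparable. Since $|P_1|=|P_2|=n$ and the number of incomparable pairs agree, the number of comparable pairs, namely $\binom{n}{2}-|E_i|$, must also agree.

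I do not anticipate any real obstacle here: the corollary is an immediate consequence of Lemma~\ref{ppocetak}b together with the elementary fact that equal symmetric functions have equal degrees and equal power-sum coefficients. The only point requiring a word of care is making explicit that $L_{P_1}=L_{P_2}$ implies $|P_1|=|P_2|$, so that comparing the coefficient of $p_{(2,1,\ldots,1)}$ is legitimate; this follows since $L_P=\Psi(P)$ is homogeneous of degree $n$ whenever $P$ has $n$ elements.
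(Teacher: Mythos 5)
Your proposal is correct and is exactly the argument the paper intends: the corollary is stated as an immediate consequence of Lemma~\ref{ppocetak}b (the paper gives no separate proof), using that $\{p_\lambda\}$ is a basis of $Sym$ so equal functions have equal power-sum coefficients, plus the degree observation $|P_1|=|P_2|$ for the comparable-pairs count. No gaps.
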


Another consequence reveals something about the structure of CHA $(\mathcal{P}, \zeta)$.

\begin{corollary}   Let $P$ be a poset.

    a) $P\in S_+(\mathcal{P}, \zeta)\iff P=\emptyset$.
    
    b) $P\in S_-(\mathcal{P}, \zeta)\iff P$ is a chain.
\end{corollary}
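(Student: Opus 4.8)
The plan is to use the theory of even and odd subalgebras from \cite{ABS}, together with the explicit information about low-degree power sum coefficients obtained in Lemma \ref{ppocetak}. Recall that $S_+(\mathcal{P}, \zeta) = S(\overline{\zeta}, \zeta)$ is the largest graded subcoalgebra on which $\overline{\zeta}$ and $\zeta$ agree, and $S_-(\mathcal{P}, \zeta) = S(\overline{\zeta}, \zeta^{-1})$ is the largest graded subcoalgebra on which $\overline{\zeta}$ and $\zeta^{-1}=\zeta\circ S$ agree. A standard fact is that membership of a homogeneous element in one of these subalgebras can be detected on the image in $QSym$: namely $h\in S_+(\mathcal{H},\zeta)$ if and only if $\Psi(h)$ lies in the even subalgebra of $(QSym, \zeta_Q)$, and likewise for the odd subalgebra. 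The even subalgebra of $QSym$ is $\mathbf{k}\cdot 1$ (in positive degree it is trivial), while the odd subalgebra of $QSym$ is exactly $Sym$, the symmetric functions. Since $\Psi(P)=L_P$, this translates the problem into: (a) $L_P \in (QSym)_+ \iff L_P$ is a scalar $\iff P=\emptyset$; (b) $L_P$ is symmetric $\iff P$ is a chain.

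For part (a), one direction is immediate: if $P=\emptyset$ then $L_P=1$, which is visibly in the even subalgebra. Conversely, suppose $P$ is nonempty with $n\geq 1$ elements. Then by Theorem \ref{mrazvoj}, $[M_{(n)}]L_P = |\Sigma_P((n))|$, which counts linear extensions of $P$ and is at least $1$; so $L_P$ has a nonzero homogeneous component in degree $n\geq 1$. Since the even subalgebra of $QSym$ contains no nonzero element of positive degree, $L_P\notin (QSym)_+$, hence $P\notin S_+(\mathcal{P},\zeta)$. (Equivalently, one can argue directly at the Hopf-algebraic level that $S_+(\mathcal{P},\zeta)$ is concentrated in degree $0$ because $(\mathcal{P},\zeta)$ is a connected graded CHA whose canonical map is injective enough in low degrees — but the $QSym$ route is cleanest.)

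For part (b), the forward direction ($P$ a chain $\Rightarrow L_P$ symmetric) follows from the example in the excerpt: if $C_n$ is the chain then $L_{C_n}=(M_{(1)})^n = p_1^n\in Sym$; and since $L$ is multiplicative and any chain is an ordinal sum $C_1C_1\cdots C_1$, any $L_P$ for a chain $P$ is a power of $M_{(1)}$ and hence symmetric. The reverse direction is the main obstacle. Here I would argue contrapositively: suppose $P$ is not a chain. Then $P$ has at least one incomparable pair, so $|E|\geq 1$ where $\mathrm{inc}(P)=(P,E)$. By Lemma \ref{ppocetak}, $[p_{(1,\ldots,1)}]L_P=1$ and $[p_{(2,1,\ldots,1)}]L_P=|E|\geq 1$. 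Now if $L_P$ were symmetric, it would have to agree, via the known $M$-to-$p$ expansion, with a genuine symmetric function; but the point is subtler: $L_P$ being symmetric does not by itself produce a contradiction with these two coefficients (both partitions are legitimate). The real obstruction must come from the non-symmetry of the $M$-expansion itself. So instead I would show: $L_P$ is symmetric if and only if $[M_\alpha]L_P=[M_\beta]L_P$ whenever $\alpha,\beta$ are rearrangements of the same partition; equivalently $|\Sigma_P(\alpha)|$ depends only on the underlying partition of $\alpha$. When $P$ is not a chain, pick an incomparable pair $\{a,b\}$ and choose $\alpha$ versus a rearrangement $\beta$ so that in one the pair $\{a,b\}$ is forced into a single size-$2$ block (contributing a factor $2$ from the two linear orders of the antichain $\{a,b\}$) while in the other it is split across two size-$1$ blocks; a careful bookkeeping of the product formula $|\Sigma_P(\alpha)|=\prod_i\zeta(P|_{(\text{blocks})})$ over all listings then shows $|\Sigma_P(\alpha)|\neq|\Sigma_P(\beta)|$. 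Concretely, the degree-$n$ computation already hints at this: comparing $M_{(2,1,\dots,1)}$ with $M_{(1,\dots,1,2)}$ and tracking where the incomparable pair can land should give unequal counts once $|E|\geq 1$ and $P$ is not a chain. I expect this combinatorial separation argument — making precise which $\alpha,\beta$ to compare and verifying the counts differ — to be the technically delicate step, whereas the reduction via even/odd subalgebras of $QSym$ and the chain case are routine.
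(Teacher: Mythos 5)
Your reduction rests on two incorrect identifications, and the resulting plan for the hard direction of (b) cannot be repaired. First, the even subalgebra of $(QSym,\zeta_Q)$ is not trivial in positive degree (it is spanned by the $M_\alpha$ with all parts even, so e.g.\ $M_{(2)}$ lies in it), and the odd subalgebra of $(QSym,\zeta_Q)$ is Stembridge's peak algebra, not $Sym$. Moreover, the ``detection'' you invoke is only an inclusion, $\Psi(S_\pm(\mathcal{H},\zeta))\subseteq S_\pm(QSym,\zeta_Q)$, not an equivalence; you need the converse for the direction ``$P$ a chain $\Rightarrow P\in S_-(\mathcal{P},\zeta)$'', and it is not available. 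The paper avoids both issues by mapping into $(Sym,\zeta_S)$ instead (legitimate since $\mathcal{P}$ is cocommutative), where $S_+$ and $S_-$ are spanned by the $p_\lambda$ with all parts even, respectively all parts odd, and by checking $C_1\in S_-(\mathcal{P},\zeta)$ directly from $\overline{\zeta}(C_1)=\zeta^{-1}(C_1)=-1$ together with the fact that $S_-$ is a subalgebra and $C_n=(C_1)^n$.

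The fatal gap is in your contrapositive for (b): you propose to show that if $P$ is not a chain then $L_P$ is not symmetric, by exhibiting rearrangements $\alpha\sim\beta$ with $|\Sigma_P(\alpha)|\neq|\Sigma_P(\beta)|$. No such pair exists: $\mathcal{P}$ is cocommutative, so $L_P=\Psi(P)$ is \emph{always} symmetric (the paper expands $L_P$ in $p_\lambda$, $e_\lambda$, $s_\lambda$ throughout; concretely $L_{T_2}=2(m_{(2)}+m_{(1,1)})$ is symmetric although $T_2$ is not a chain). The obstruction you dismissed as insufficient is in fact the whole argument: by Lemma \ref{ppocetak}, $[p_{(2,1,\ldots,1)}]L_P=|E|\neq 0$ when $P$ is not a chain, and $(2,1,\ldots,1)$ has an even part, so $L_P\notin\mathrm{span}\{p_\lambda\mid\lambda\text{ odd}\}=S_-(Sym,\zeta_S)$, whence $P\notin S_-(\mathcal{P},\zeta)$. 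Likewise for (a) your claim that the even subalgebra has no positive-degree elements is false; the correct statement is that $[p_{(1,\ldots,1)}]L_P=1\neq 0$ while $(1,\ldots,1)$ is not an even partition, so $L_P\notin S_+(Sym,\zeta_S)$ for $P\neq\emptyset$.
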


\begin{proof}
    If $\Psi: (\mathcal{P}, \zeta)\rightarrow (Sym, \zeta_S)$ is a universal morphism of CHAs, then  $\Psi(S_+(\mathcal{P}, \zeta))\subseteq S_+(Sym, \zeta_S)$ and $\Psi(S_-(\mathcal{P}, \zeta))\subseteq S_-(Sym, \zeta_S)$. We say that partition is \textit{even} (\textit{odd}) if all of its parts are even (odd). $S_+(Sym, \zeta_S)$ is linearly spanned by the set $\{p_\lambda\mid\lambda\text{ is even}\}$, while $S_-(Sym, \zeta_S)$ is linearly spanned by the set $\{p_\lambda\mid\lambda\text{ is odd}\}$ \cite{ABS}. 

     If $P\neq\emptyset$, then $[p_{(1, 1, \ldots, 1)}]L_P=1$ and therefore $L_P=\Psi(P)\notin S_+(Sym, \zeta_S)$. Consequently, $P\notin S_+(\mathcal{P}, \zeta)$ and the converse implication of part a) is trivial. If $P$ is not a chain, then $[p_{(2, 1, \ldots, 1)}]L_P\neq 0$. Hence, $L_P=\Psi(P)\notin S_-(Sym, \zeta_S)$ which implies that $P\notin S_-(\mathcal{P}, \zeta)$. Conversely, let $P=C_n$ be the chain with $n$ elements. Since $\overline{\zeta}(C_1)=\zeta^{-1}(C_1)=-1$, we have that $C_1\in S_-(\mathcal{P}, \zeta)$ and consequently $C_n=(C_1)^n\in S_-(\mathcal{P}, \zeta)$.
\end{proof}

In order to further explore the expansion of $L_P$ in the power sum basis, we need some additional concepts. A poset is $(a+b)-$\textit{free} if it does not contain an induced subposet isomorphic to a disjoint union of two chains with $a$ and $b$ elements. A partition $\{P_1, \ldots, P_k\}\vdash P$ is \textit{stable} if $P_i=P|_{P_i}$ is a trivial poset for every $i\in[k]$. Let $\Pi_P$ denote the collection of all stable partitions of $P$. 

\begin{lemma} \label{slobodan}
    If $P$ is $(2+2)-$free, then the number of linear extensions of $P$ is \[\zeta(P)=\sum_{\{P_1, \ldots, P_k\}\in \Pi_P}\prod_{i=1}^{k}(|P_i|-1)!.\]
\end{lemma}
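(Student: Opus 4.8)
The plan is to pass to the interval-order model. By a classical theorem of Fishburn, a finite poset is $(2+2)$-free if and only if it is an interval order, so I would fix an interval representation $x\mapsto I_x=[l_x,r_x]$ of $P$ with all $2n$ endpoints distinct, characterised by $x<_P y\iff r_x<l_y$. In this model two elements are incomparable exactly when their intervals intersect; since pairwise intersecting intervals on a line have a common point, a subset $A\subseteq P$ is an antichain if and only if $\bigcap_{x\in A}I_x\neq\emptyset$. Thus stable partitions of $P$ are precisely the partitions of the interval family into groups each admitting a common point.

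Next I would reformulate linear extensions. Since $\omega_j\leq_P\omega_i$ with $i<j$ forces $\omega_j<_P\omega_i$, a listing $\omega=(\omega_1,\dots,\omega_n)$ is a linear extension of $P$ if and only if there is no $i<j$ with $r_{\omega_j}<l_{\omega_i}$, i.e. if and only if $l_{\omega_i}<\min\{r_{\omega_j}:j>i\}$ for every $i$. Reading $\omega$ from right to left and letting $\mu$ be the running minimum of the right endpoints of the elements already read (with $\mu=+\infty$ before the first, that is, rightmost, element), this says exactly that each newly read element $x$ satisfies $l_x<\mu$.

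Now I would set up a bijection between linear extensions of $P$ and pairs consisting of a stable partition $\{P_1,\dots,P_k\}\in\Pi_P$ together with, for each block $P_i$, a linear order on $P_i$ whose first element is the one with smallest right endpoint. Given a linear extension, read it from right to left and call an element a \emph{record} if it strictly decreases $\mu$; the records cut the reversed word into consecutive blocks, each headed by its record. One checks that: every element of such a block has its interval containing the point $r$ of that block's record, so the block is an antichain and its record is its unique element of minimal right endpoint; within a block the record must come first while the remaining elements may be listed in any order; and the only inter-block constraint is that successive records have strictly decreasing right endpoints, which forces the blocks to appear in order of decreasing minimal right endpoint. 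Conversely, from a stable partition one orders the blocks by decreasing minimal right endpoint, places the minimal-$r$ element of each block first, lists the rest of each block arbitrarily, and verifies --- using $l_x<r_x$ for the records and the fact that blockmates' intervals meet for the non-records --- that the greedy condition of the previous paragraph holds, so the reversed word is a linear extension. These two assignments are mutually inverse.

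Finally I would count: the fiber of this bijection over a fixed stable partition $\{P_1,\dots,P_k\}$ has size $\prod_{i=1}^k(|P_i|-1)!$, since for each block the minimal-$r$ element is forced first and the remaining $|P_i|-1$ elements are free; summing over $\Pi_P$ then yields $\zeta(P)=\sum_{\{P_1,\dots,P_k\}\in\Pi_P}\prod_{i=1}^k(|P_i|-1)!$. The main obstacle is the verification in the third paragraph that the greedy condition imposes no hidden constraints --- no extra relations among non-records inside a block, and none between blocks beyond the ordering of the records --- so that the fiber size is exactly $\prod(|P_i|-1)!$; this is precisely the point at which $(2+2)$-freeness, through the interval model and the nestedness it forces on the principal down-sets, is used. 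An alternative, more computational route would be induction on $|P|$, deleting the minimal element whose up-set contains the up-sets of all others (it exists by $(2+2)$-freeness) and matching the resulting recursions on the two sides, but the interval-model bijection seems cleaner.
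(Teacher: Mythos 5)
Your proof is correct, but it takes a genuinely different route from the paper's. The paper stays entirely inside the poset: it fixes a listing $a$ of $P$ ordered by weakly decreasing size of the principal up-sets $|D_P(\cdot)|$, proves by a short direct $(2+2)$ exchange argument that in any linear extension every element preceding $a_1$ is a minimal element incomparable to $a_1$, and so peels antichain blocks off the \emph{front} of the linear extension, each block ending in its distinguished element; the fiber count $\prod_{i}(|P_i|-1)!$ comes from freely permuting the non-distinguished elements of each block. You instead invoke Fishburn's theorem to pass to an interval representation and peel blocks off the \emph{back} of the listing, using records of the running minimum of right endpoints. The two mechanisms are near mirror images --- in an interval order, your record of minimal right endpoint plays exactly the role of the paper's element of maximal up-set --- but they are applied at opposite ends of the word and generally produce different maps from linear extensions to $\Pi_P$, each with the required fiber sizes. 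Your version buys a very transparent picture (antichains are families of pairwise intersecting intervals, and all the block and fiber verifications reduce to one-line endpoint inequalities such as $l_x<r_x$), at the price of importing Fishburn's characterization of $(2+2)$-free posets as interval orders; the paper's version is self-contained and uses the $(2+2)$ hypothesis directly, at the price of a slightly fiddlier argument that everything incomparable to the max-up-set element is minimal. Both are complete proofs of the stated identity.
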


\begin{proof} Recall that by $D_P(p)$, we denoted the set of all $q\in P$ such that $p\leq_P q$. Let $a=(a_1, \ldots, a_n)$ be one fixed $P-$listing such that $|D_P(a_i)|\geq |D_P(a_{i+1})|$ for every $i\in [n-1]$. Note that $a$ is a linear extension of $P$ since if $p<_Pq$, then $D_P(q)\subsetneq D_P(p)$ and hence $|D_P(q)|<|D_P(p)|$.

Let $\omega=(\omega_1, \ldots, \omega_n)$ be any linear extension. If $\omega_i=a_1$, then the predecessors $\omega_1, \ldots, \omega_{i-1}$ of $\omega_i$ must satisfy the following. Since $a_1$ is minimal in $P$, $\omega_1, \ldots, \omega_{i-1}$ are incomparable with it. Further, $\omega_1, \ldots, \omega_{i-1}$ are also minimal. Namely, every element $p\in P$ that is incomparable with $a_1$ is minimal as well. If not, there would exist $q$ such that $q$ is minimal and $q <_Pp$. If $r>_P a_1$, then either $q$ is incomparable with $r$, or $q<_P r$. The first is not possible since then $\{q, p, a_1, r\}$ would be isomorphic to a disjoint union of two chains of length 2. Therefore, $q<_P r$ for every $r>_P a_1$. Since $p\in D_P(q)\setminus D_P(a_1)$, this implies $|D_P(a)|<|D_P(q)|$, which contradicts the choice of $a_1$. In other words, $P_1=\{\omega_1, \ldots, \omega_{i}\}$ is a trivial subposet of $P$. 

In the sequel, we focus on $P\setminus P_1=\{\omega_{i+1}, \ldots, \omega_n\}$, which is also $(2+2)-$free. We choose $a_l$, the first entry of $a$ that does not appear in $P_1$. If $\omega_{i+j}=a_l$ then in the same manner we prove that $P_2=\{\omega_{i+1}, \ldots \omega_{i+j}\}$ is the set of minimal, mutually incomparable elements of $P\setminus P_1$ and so on. 

Let $\mathcal{L}(P)$ denote the set of all linear extensions of $P$.
Note that the previously described procedure gives rise to a map $ \varphi: \mathcal{L}(P)\rightarrow\Pi_P$ such that $\varphi: \omega\mapsto \{P_1, P_2, \ldots\}$. In the sequel, we want to find $|\varphi^{-1}[\{P_1, \ldots, P_k\}]|$ for every $\{P_1, P_2, \ldots\}\in \Pi_P$.

Let $\{P_1, \ldots, P_k\}\in \Pi_P$. Without loss of generality, we may suppose that $\text{min}\{i\mid a_i\in P_j\}\leq\text{min}\{i\mid a_i\in P_{j+1}\}$ for every $j\in [k-1]$. Therefore, $a_1\in P_1$. According to previous consideration, since $P_1$ is trivial, every element in $P_1$ is minimal. There are $(|P_1|-1)!$ listings of $P_1$ that place $a_1$ on the last place. Similarly, every element of $P_2$ is minimal in $P\setminus P_1$. There are $(|P_2|-1)!$ listings of $P_2$ that place the first entry of $a$ that did not appear in $P_1$ on the last place and so on. Since any element of $P_{s+1}$ is minimal in $P\setminus \bigcup_{j=1}^s P_j$, it is not difficult to see that any concatenation of such listings yield a linear extension of $P$. Moreover, such linear extension maps exactly to $\{P_1, \ldots, P_k\}$ via $\varphi$. Therefore, \[\zeta(P)=|\mathcal{L}(P)|=\sum_{\{P_1, \ldots, P_k\}\in \Pi_P}|\varphi^{-1}[\{P_1, \ldots, P_k\}]|=\sum_{\{P_1, \ldots, P_k\}\in \Pi_P}\prod_{i=1}^{k}(|P_i|-1)!,\]as desired.
 \end{proof}

If $G=(V, E)$ is a graph, let $\mathfrak{S}(G)$ denote the set of all permutations $\sigma$ of the vertex set $V$ such that every cycle of $\sigma$ in the unique cycle decomposition is a clique in $G$. For a permutation $\sigma$, we define $\mathrm{type}(\sigma)$ as a partition whose entries correspond to the lengths of these cycles. 

It is known that the incomparability graph of any poset does not contain an induced subgraph isomorphic to a cycle of length greater than 4. Note that a poset $P$ is $(2+2)-$free if and only if $\mathrm{inc}(P)$ does not contain an induced subgraph isomorphic to the cycle with 4 vertices. For this class of posets, we are able to obtain the following generalization of Lemma \ref{ppocetak}.

\begin{theorem} \label{prazvoj}
    If $P$ is a $(2+2)-$free poset, then \[L_P=\sum_{\sigma\in\mathfrak{S}(\mathrm{inc(P)})}p_{\mathrm{type}(\sigma)}.\]
\end{theorem}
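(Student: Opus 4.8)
The plan is to combine the fundamental-basis expansion from Theorem~\ref{fkoeficijent} with the structural description of linear extensions of $(2+2)$-free posets from Lemma~\ref{slobodan}. Since $L_P$ is symmetric (being a composition with the terminal map into $QSym$ of a cocommutative Hopf algebra, $L_P\in Sym$), it is determined by its coefficients $[m_\lambda]L_P$, and these equal $[F_{\mathrm{set}(\lambda^{\mathrm{rev}})}]L_P$ once one checks the standard fact that for a symmetric function $f$, $[m_\lambda]f=[F_{\alpha}]f$ for any composition $\alpha$ of type $\lambda$ (the fundamental and monomial expansions agree on symmetric functions in this way because $\omega$ acts nicely, or more directly because the transition matrix between $m_\lambda$ and the $F_\alpha$ is unitriangular in the right sense). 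So the task reduces to showing that for a partition $\lambda\vdash n$, the coefficient $[m_\lambda]L_P$ equals the number of permutations $\sigma\in\mathfrak{S}(\mathrm{inc}(P))$ with $\mathrm{type}(\sigma)=\lambda$.

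The key step is a bijective or counting argument. First I would fix a composition $\alpha\models n$ of type $\lambda$ and compute $|\Sigma_P(\alpha)|$ from Theorem~\ref{mrazvoj}: by definition this counts $P$-listings $\omega$ that factor into consecutive blocks of sizes $\alpha_1,\dots,\alpha_k$, each of which is a linear extension of the corresponding induced subposet. Because $P$ is $(2+2)$-free, each block's induced subposet is again $(2+2)$-free, so Lemma~\ref{slobodan} applies blockwise. However, rather than work with a single composition, the cleaner route is to pass through the fundamental expansion: by Theorem~\ref{fkoeficijent}, $[F_I]L_P=\sum_{\omega\in\Sigma_P(I)}\chi_{\mathrm{Comp}(P,\omega)}(I)$, and I would argue that for $(2+2)$-free $P$, only listings $\omega$ whose each $I$-block is a \emph{trivial} (antichain) subposet contribute, and each such contributes exactly $1$. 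Indeed, $\chi_{\mathrm{Comp}(P,\omega)}(I)$ is an inclusion-exclusion over refinements; using the $(2+2)$-free hypothesis and the analysis in the proof of Lemma~\ref{slobodan} (every element incomparable to a minimal element is itself minimal, etc.), one shows the relevant plucking $\mathrm{Comp}(P,\omega)$, restricted below $I$, is a Boolean interval precisely when all $I$-blocks are antichains, making $\chi$ vanish otherwise.

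Having reduced to counting pairs $(\omega, I)$ where $I$ has type $\lambda$ and every $I$-block of $\omega$ is an antichain of $P$ that is moreover a \emph{linear extension in the cyclic/minimal sense} forced by the factorization, I would then set up the bijection with $\mathfrak{S}(\mathrm{inc}(P))$: a permutation $\sigma$ of type $\lambda$ whose cycles are cliques of $\mathrm{inc}(P)$, i.e. antichains of $P$, corresponds to choosing the cyclic blocks together with, for each block, the $(|P_i|-1)!$ cyclic orderings — and this is exactly the count $\prod(|P_i|-1)!$ that appears in Lemma~\ref{slobodan}, now unpacked as ``cycle = clique in $\mathrm{inc}(P)$ with a chosen cyclic order.'' Matching the $(|P_i|-1)!$ listings fixing the distinguished element $a_\ell$ in the last position (from the proof of Lemma~\ref{slobodan}) with the $(|P_i|-1)!$ ways to arrange a cycle of length $|P_i|$ is the natural dictionary; summing over stable partitions $\{P_1,\dots,P_k\}$ on one side and over cycle types on the other gives the identity.

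The main obstacle I expect is the bookkeeping in the second step: showing that the Euler-character weights $\chi_{\mathrm{Comp}(P,\omega)}(I)$ collapse to an indicator of ``all blocks antichains.'' This requires understanding the plucking $\mathrm{Comp}(P,\omega)$ well enough to see it is Boolean on the relevant interval, which in turn leans on the $(2+2)$-free structure exactly as in Lemma~\ref{slobodan}'s proof — one must argue that within a block that happens to be an antichain, \emph{every} refinement is compatible, whereas a block with a genuine relation kills the contribution via the sign-reversing involution of Lemma~\ref{nestajanje}. An alternative, perhaps smoother, route avoids fundamental functions entirely: expand $L_P$ directly in $m_\lambda$ using Theorem~\ref{mrazvoj} and Lemma~\ref{slobodan} applied blockwise to $|\Sigma_P(\alpha)|$, then perform the inclusion–exclusion from $M_\alpha$ to $m_\lambda$ and recognize the result as the exponential-formula-style count of permutations with clique cycles; I would try both and present whichever inclusion–exclusion is less painful.
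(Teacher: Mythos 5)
Your primary route has genuine gaps at both ends. First, the reduction is to the wrong statement: the theorem asserts that the number of $\sigma\in\mathfrak{S}(\mathrm{inc}(P))$ of type $\lambda$ is the coefficient of $p_\lambda$, not of $m_\lambda$. These differ: for the trivial poset $T_2$ one has $L_{T_2}=2m_{(1,1)}+2m_{(2)}$, so $[m_{(1,1)}]L_{T_2}=2$, while only the identity permutation has type $(1,1)$. (The identity $p_{(1,1)}+p_{(2)}=2m_{(1,1)}+2m_{(2)}$ does hold, but only after re-expanding the $p$'s in the $m$-basis, which is exactly the step your reduction skips.) Second, the ``standard fact'' $[m_\lambda]f=[F_\alpha]f$ for symmetric $f$ is false: $p_2=M_{(2)}=F_{(2)}-F_{(1,1)}$, so $[F_{(1,1)}]p_2=-1$ while $[m_{(1,1)}]p_2=0$. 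Third, the proposed collapse of $\chi_{\mathrm{Comp}(P,\omega)}(I)$ to an indicator of ``every $I$-block is an antichain'' cannot be right: for $I=\emptyset$ every linear extension $\omega$ has $\chi_{\mathrm{Comp}(P,\omega)}(\emptyset)=1$, so $[F_\emptyset]L_P=\zeta(P)$, whereas your collapse would force this coefficient to vanish for every poset that is not itself an antichain. So the route through Theorem~\ref{fkoeficijent} does not work as described.

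Your closing ``alternative route'' is, in essence, the paper's actual proof, and it is the one you should develop. The paper expands $L_P=\sum_f|\Sigma_P(f)|x_f$ with $|\Sigma_P(f)|=\prod_{i\in f[P]}\zeta(P|_{f^{-1}[i]})$, applies Lemma~\ref{slobodan} to each monochromatic block, and on the other side writes $\sum_\sigma p_{\mathrm{type}(\sigma)}=\sum_f \#\{\sigma\in\mathfrak{S}(\mathrm{inc}(P))\mid f\sim\sigma\}\,x_f$; the unique cycle decomposition shows that the number of admissible $\sigma$ restricted to a block $P_i$ is $\sum\prod_j(|Q_i^j|-1)!$ summed over partitions of $P_i$ into cliques of $\mathrm{inc}(P_i)$, and this matches Lemma~\ref{slobodan} because cliques of the incomparability graph are exactly antichains of $P$. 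No passage through $F$- or $m$-coefficients, and no Euler-character computation, is needed. As written, though, your sketch of this route is a single sentence; the term-by-term matching of the two coefficients of $x_f$, which is the actual content of the proof, is not carried out.
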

\begin{proof}

We begin by recalling the result from Theorem \ref{mrazvoj} \[L_P=\sum_{f: P\rightarrow\mathbb{N}}|\Sigma_P(f)|x_f.\] For any coloring $f$, monochromatic blocks $P_i=f^{-1}[i]$ for $i\in f[P]$ form a composition of $P$. Every subposet of $P$ is also $(2+2)-$free and hence the cardinality of $\Sigma_P(f)$ can be calculated using Lemma \ref{slobodan}:
\begin{equation}\label{prvo}|\Sigma_P(f)|=\prod_{i\in f[P]}\zeta(P_i)=\prod_{i\in f[P]}\sum_{\{Q_i^1, \ldots, Q_i^{l_i}\}\in\Pi(\mathrm{inc}(P_i))}\prod_{j=1}^{l_i}(|Q_i^{j}|-1)!.\end{equation}
    On the other hand, for any permutation $\sigma$, \[p_{\mathrm{type}(\sigma)}=\sum_{f\sim\sigma}x_f,\]
    where $f\sim\sigma$ denotes that $f$ is a coloring monochromatic on cycles of $\sigma$. Therefore,
    \[\sum_{\sigma\in\mathfrak{S}(\mathrm{inc(P)})}p_{\mathrm{type}(\sigma)}=\sum_{\sigma\in\mathfrak{S}(\mathrm{inc(P)})}\sum_{f\sim\sigma}x_f=\sum_{f}\sum_{\substack{\sigma\in\mathfrak{S}(\mathrm{inc(P)})\\
                  f\sim\sigma}}x_f.\] Every $\sigma$ such that $f\sim\sigma$ permutes the elements within one monochromatic block $P_i$. 
                  For a graph $G=(V, E)$, let $\Pi'(G)$ denote the set of all partitions of $V$ such that every block induces a clique in $G$.
                  
                  The unique cycle decomposition of permutation tells us that $\sigma|_{P_i}$ can be chosen in \[\sum_{\{Q_i^1, \ldots, Q_i^{l_i}\}\in\Pi'(\mathrm{inc}(P_i))}\prod_{j=1}^{l_i}(|Q_i^{j}|-1)!\] ways. 
                  Therefore, $\sigma$ can be chosen in \begin{equation}\label{drugo}
                     \prod_{i\in f[P]}\sum_{\{Q_i^1, \ldots, Q_i^{l_i}\}\in\Pi'(\mathrm{inc}(P_i))}\prod_{j=1}^{l_i}(|Q_i^{j}|-1)! 
                  \end{equation} ways.
                  An induced subgraph of $\mathrm{inc}(P_i)$ is a clique if and only if the adequate induced subposet of $P_i$ is trivial. In other words, $\{Q_i^1, \ldots, Q_i^{l_i}\}\in\Pi'(\mathrm{inc}(P_i))$ if and only if $\{Q_i^1, \ldots, Q_i^{l_i}\}\in\Pi(P_i)$. Therefore, the terms in Equations (\ref{prvo}) and (\ref{drugo}) are the same, which completes the proof.
\end{proof}

\begin{corollary}\label{ppozitivan}
    If $P$ is $(2+2)-$free, $L_P$ is $p-$positive and $p-$integral.
\end{corollary}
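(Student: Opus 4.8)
The plan is to read off the claim directly from Theorem \ref{prazvoj}. That theorem gives, for a $(2+2)$-free poset $P$, the identity
\[
L_P=\sum_{\sigma\in\mathfrak{S}(\mathrm{inc}(P))}p_{\mathrm{type}(\sigma)}.
\]
The only work remaining is to observe that the right-hand side, once its terms are grouped by partition, is manifestly a nonnegative integer combination of the $p_\lambda$.

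Concretely, I would argue as follows. The set $\mathfrak{S}(\mathrm{inc}(P))$ is finite, being a subset of the symmetric group on the vertex set of $P$, and the map $\sigma\mapsto\mathrm{type}(\sigma)$ sends each $\sigma$ to a partition of $n=|P|$. Collecting equal terms therefore rewrites the displayed sum as $L_P=\sum_{\lambda\vdash n}c_\lambda\,p_\lambda$, where
\[
c_\lambda=\bigl|\{\sigma\in\mathfrak{S}(\mathrm{inc}(P))\mid\mathrm{type}(\sigma)=\lambda\}\bigr|\in\mathbb{Z}_{\geq 0}.
\]
Since every coefficient $c_\lambda$ is a cardinality, it is a nonnegative integer, which is exactly the assertion that $L_P$ is $p$-positive and $p$-integral.

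There is no real obstacle here: the entire content of the corollary is packaged into Theorem \ref{prazvoj}, and the proof is simply the remark that a sum of basis elements, indexed by a finite set, has nonnegative integer coordinates in that basis. The one point worth stating explicitly (so the reader is not left wondering) is that the indexing set $\mathfrak{S}(\mathrm{inc}(P))$ is finite, which is immediate.
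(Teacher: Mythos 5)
Your proof is correct and matches the paper's intent exactly: the corollary is stated as an immediate consequence of Theorem \ref{prazvoj}, with the coefficients $c_\lambda$ being cardinalities of the fibers of $\sigma\mapsto\mathrm{type}(\sigma)$ over $\mathfrak{S}(\mathrm{inc}(P))$ and hence nonnegative integers. Nothing more is needed.
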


This is not true in general. For example, if $P$ is a disjoint union of two copies of $C_2$, an explicit calculation shows that $L_P=p_{1,1,1,1}+4p_{2,1, 1}+2p_{2, 2}-p_4.$ Note that the first two coefficients can be obtained from Lemma \ref{ppocetak}. In addition, the first three coefficients are compatible with the result given in Theorem \ref{prazvoj}, but the last one is not. The fact that $\text{ps}^1(p_\lambda)(m)=m^{l(\lambda)}$ yields the following.

\begin{corollary}\label{polinom}
    If $P$ is $(2+2)-$free, $l_P$ has positive integer coefficients.
\end{corollary}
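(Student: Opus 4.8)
The plan is to push the power-sum expansion from Theorem \ref{prazvoj} through the principal specialization $\mathrm{ps}^1$. First I would record the elementary fact that, since $\mathrm{ps}^1$ is an algebra homomorphism and $\mathrm{ps}^1(p_i)(m)=p_i(\underbrace{1,\ldots,1}_{m},0,\ldots)=m$ for every $i\ge 1$, one has $\mathrm{ps}^1(p_\lambda)(m)=m^{l(\lambda)}$ for every partition $\lambda$, where $l(\lambda)$ is the number of parts of $\lambda$; this is the identity quoted in the sentence just before the corollary.

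Then, because $P$ is $(2+2)$-free, Theorem \ref{prazvoj} applies and yields $L_P=\sum_{\sigma\in\mathfrak{S}(\mathrm{inc}(P))}p_{\mathrm{type}(\sigma)}$. Writing $c(\sigma)$ for the number of cycles in the cycle decomposition of $\sigma$, we have $l(\mathrm{type}(\sigma))=c(\sigma)$ by the very definition of $\mathrm{type}(\sigma)$. Applying $\mathrm{ps}^1$ to both sides and using the previous identity gives
\[
l_P(m)=\mathrm{ps}^1(L_P)(m)=\sum_{\sigma\in\mathfrak{S}(\mathrm{inc}(P))}m^{c(\sigma)}=\sum_{k\ge 0}a_k(P)\,m^k,
\]
where $a_k(P)=|\{\sigma\in\mathfrak{S}(\mathrm{inc}(P))\mid c(\sigma)=k\}|$ is plainly a nonnegative integer for each $k$. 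Hence $l_P$ is a nonnegative integer combination of the powers $m^k$, which is the assertion. (For $P\ne\emptyset$ every such $\sigma$ has at least one cycle, so $a_0(P)=0$; and the identity permutation contributes $m^{|P|}$, so $\deg l_P=|P|$.)

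There is essentially no obstacle in this argument. The only points that need care are that the hypothesis of Theorem \ref{prazvoj}---$(2+2)$-freeness---is exactly what is assumed here, and that $l(\mathrm{type}(\sigma))$ really is the cycle count of $\sigma$; both are immediate. Alternatively, one may bypass Theorem \ref{prazvoj} and argue directly from Corollary \ref{ppozitivan}: $p$-positivity and $p$-integrality of $L_P$, combined once more with $\mathrm{ps}^1(p_\lambda)(m)=m^{l(\lambda)}$, force $l_P$ to be a nonnegative integer combination of the monomials $m^k$.
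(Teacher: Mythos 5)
Your proposal is correct and follows exactly the paper's route: the corollary is stated as an immediate consequence of Theorem \ref{prazvoj} (equivalently, Corollary \ref{ppozitivan}) together with the identity $\mathrm{ps}^1(p_\lambda)(m)=m^{l(\lambda)}$, which is precisely the computation you carry out. The explicit combinatorial reading of the coefficients as cycle counts in $\mathfrak{S}(\mathrm{inc}(P))$ is a nice touch but not a different argument.
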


At this moment, it does not seem to us that $l_P$ satisfies some form of the deletion-contraction property, unlike its relatives - the chromatic polynomial and the Redei-Berge polynomial. Beside being a computational disadvantage, this prevents us from proving that $l_P$ always has integer coefficients.

Recall that the \textit{clique number} of a graph $G$, denoted as $\omega(G)$, is the size of the largest clique in $G$. The corresponding concept for posets in accordance with the assignment $P\mapsto \mathrm{inc}(P)$ is the \textit{incomparability number} $i(P)$, which we define as the size of the largest trivial subposet in $P$. 

\begin{corollary}
    If $P_1$ and $P_2$ are $(2+2)-$free posets and $L_{P_1}=L_{P_2},$ then $i(P_1)=i(P_2)$.
\end{corollary}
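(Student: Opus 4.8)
The plan is to read $i(P)$ off directly from the power-sum expansion of $L_P$ provided by Theorem \ref{prazvoj}. Write $L_P=\sum_{\lambda}c_\lambda(P)\,p_\lambda$, so that by that theorem $c_\lambda(P)=\#\{\sigma\in\mathfrak{S}(\mathrm{inc}(P))\mid \mathrm{type}(\sigma)=\lambda\}$ is a nonnegative integer; by Corollary \ref{ppozitivan} there is no cancellation, so the support $\mathrm{supp}(L_P)=\{\lambda\mid c_\lambda(P)\neq 0\}$ is genuinely determined by $L_P$.

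The crux is the claim that $i(P)$ equals the largest part occurring among the partitions in $\mathrm{supp}(L_P)$, i.e. $i(P)=\max\{\lambda_1\mid \lambda\in\mathrm{supp}(L_P)\}$. For the inequality ``$\le$'', I would take a trivial (antichain) subposet $C\subseteq P$ with $|C|=i(P)$; then $C$ is a clique of $\mathrm{inc}(P)$, so the permutation $\sigma$ of $P$ acting as a single $i(P)$-cycle on $C$ and as the identity elsewhere lies in $\mathfrak{S}(\mathrm{inc}(P))$, since each of its cycles is either $C$ (a clique) or a singleton. Hence $\mathrm{type}(\sigma)$ has a part equal to $i(P)$, so $c_{\mathrm{type}(\sigma)}(P)\ge 1$ and $\mathrm{type}(\sigma)\in\mathrm{supp}(L_P)$. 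For ``$\ge$'', every cycle of every $\sigma\in\mathfrak{S}(\mathrm{inc}(P))$ is by definition a clique of $\mathrm{inc}(P)$, hence a trivial subposet of $P$, hence of size at most $i(P)$; so every part of every $\mathrm{type}(\sigma)$ is $\le i(P)$.

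With the claim in hand, $L_{P_1}=L_{P_2}$ forces $\mathrm{supp}(L_{P_1})=\mathrm{supp}(L_{P_2})$, hence the two largest parts agree, and therefore $i(P_1)=i(P_2)$.

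There is essentially no obstacle here beyond the small construction in the ``$\le$'' direction: one must exhibit a permutation witnessing a part of size $i(P)$ in the expansion, which is done by taking a maximum antichain as one long cycle and padding the rest of $P$ by fixed points. The positivity statement of Corollary \ref{ppozitivan} is what guarantees that reading the support of the $p$-expansion is legitimate; without it one would have to worry about coincidental cancellations killing the partition of largest first part.
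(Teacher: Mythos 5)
Your proof is correct and follows essentially the same route as the paper, which also reads $i(P)=\omega(\mathrm{inc}(P))$ off the $p$-expansion of Theorem \ref{prazvoj} (the paper phrases it as $i(P)=\max\{k\mid [p_{(k,1,\ldots,1)}]L_P\neq 0\}$, and your witness permutation has exactly type $(i(P),1,\ldots,1)$). You merely spell out the details — the maximum-antichain single-cycle construction and the clique bound on cycle lengths — that the paper leaves as immediate.
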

\begin{proof}
    This follows immediately from Theorem \ref{prazvoj} since it implies that $i(P)=\omega(\mathrm{inc}(P))=\mathrm{max}\{k\mid [p_{(k, 1, \ldots, 1)}]L_P\neq 0\}.$
\end{proof}

In the sequel, we explore the expansion of $L_P$ in Schur basis for which we need some additional notation. If $P$ is a poset with $n$ elements and $\alpha\models n$, a $P-$\textit{filling} of $\alpha$ is a filling of the diagram of $\alpha$ with elements of $P$ such that every row is a linear extension of appropriate subposet. We say that such $P-$filling has shape $\alpha$. If, in addition, $\alpha\vdash n$ and the filling is strictly increasing along the columns, we say that this filling is a $P-$\textit{tableau}. Note that this condition can be reformulated as follows: \begin{equation}\label{uslov}
    \text{If $a_{i+1, j}$ is defined, then $a_{i, j}$ is also defined and $a_{i, j}<_Pa_{i+1, j}.$}
\end{equation}

\begin{example}\label{2+1}
Let $P$ be the poset whose digraph $D_P$ is shown in the diagram below. Of the three $P-$fillings that are given, only the first one is a $P-$tableau.
    \begin{center}
   \begin{tikzcd}
  & 3           \\
1 & 2 \arrow[u]
\end{tikzcd}
    \end{center}  

    \centering
    \begin{ytableau}
2 & 1 \\
3 
\end{ytableau}\hspace{20mm} 
\begin{ytableau}
1 & 2 \\
3 
\end{ytableau}\hspace{20mm}
\begin{ytableau}
2  \\
3 & 1
\end{ytableau}
\end{example}

The following proof will be a slight modification of the proof from \cite{G} which deals with the $s-$positivity of the chromatic symmetric function.

\begin{theorem} \label{srazvoj}
    If $P$ is a $(2+1)-$free poset, then $[s_{\lambda}]L_P$ is the number of $P-$tableaux of shape $\lambda$.
\end{theorem}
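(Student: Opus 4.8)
The plan is to reduce the statement to the classical combinatorics of semistandard tableaux by exploiting how rigid $(2+1)$-free posets are. First I would record the structural fact that a poset $P$ is $(2+1)$-free if and only if incomparability in $P$ is a transitive relation, i.e.\ if and only if $\mathrm{inc}(P)$ is a disjoint union of cliques, which is the same as saying that $P$ is an ordinal sum of antichains $P=A_1A_2\cdots A_m$. Indeed, if incomparability fails to be transitive there are elements with $a$ incomparable to $c$, $c$ incomparable to $b$, and $a<_Pb$, so $\{a,b,c\}$ is an induced $2+1$; conversely, if incomparability is transitive its classes are antichains, and any two distinct classes are comparable one way or the other throughout $P$ (otherwise some antichain would contain a comparable pair), so the classes are linearly ordered and their ordinal sum is $P$. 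Write $a_i=|A_i|$ and let $\mu$ be the partition obtained by sorting $(a_1,\dots,a_m)$.

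Next I would compute $L_P$ explicitly. From the example $L_{T_n}=n!\sum_{\alpha\models n}M_\alpha$ and the classical identity $\sum_{\alpha\models n}M_\alpha=h_n$, an antichain $A$ satisfies $L_A=|A|!\,h_{|A|}$, so the corollary $L_{P_1P_2}=L_{P_1}L_{P_2}$ gives
\[
L_P=\prod_{i=1}^{m}L_{A_i}=\Bigl(\prod_{i=1}^{m}a_i!\Bigr)h_{a_1}h_{a_2}\cdots h_{a_m}=\Bigl(\prod_{i=1}^{m}a_i!\Bigr)h_{\mu}.
\]
Expanding $h_\mu=\sum_{\lambda}K_{\lambda\mu}s_\lambda$, where $K_{\lambda\mu}$ is the number of semistandard Young tableaux of shape $\lambda$ and content $\mu$, yields $[s_\lambda]L_P=\bigl(\prod_i a_i!\bigr)K_{\lambda\mu}$.

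It then remains to identify this number with the number of $P$-tableaux of shape $\lambda$, which I would do by an explicit bijection. Given a $P$-tableau $T$, replace each entry by the index $i$ of the antichain $A_i$ containing it. Since $p\leq_Pq$ holds exactly when $p$ and $q$ lie in distinct antichains $A_i,A_j$ with $i<j$, the condition that every row of $T$ is a linear extension becomes precisely the requirement that the indices weakly increase along rows, the condition $T(i,j)<_PT(i+1,j)$ becomes precisely the requirement that they strictly increase down columns, and each value $i$ occurs $a_i$ times; so the resulting array is a semistandard Young tableau $U$ of shape $\lambda$ and content $\mu$. Recording in addition, for each $i$, which element of $A_i$ sits in which $i$-labelled cell of $U$ amounts to a bijection $A_i\to U^{-1}(i)$, and conversely any semistandard $U$ of content $\mu$ together with such a family of bijections reassembles into a $P$-tableau, since inside a single antichain every assignment respects both conditions automatically. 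Hence the number of $P$-tableaux of shape $\lambda$ is $\bigl(\prod_i a_i!\bigr)K_{\lambda\mu}=[s_\lambda]L_P$, as claimed.

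The hypothesis is used only in the first step, and once $P$ is presented as an ordinal sum of antichains everything else is bookkeeping, so I expect no serious obstacle apart from checking carefully that the row and column conditions transport cleanly under the passage to antichain indices. (An alternative closer to the route hinted at by the paper — Gasharov's treatment of the chromatic symmetric function in \cite{G} — is to use the Jacobi--Trudi determinant $s_\lambda=\det(h_{\lambda_i-i+j})$ to write $[s_\lambda]L_P=\sum_{\sigma}\mathrm{sgn}(\sigma)\,r_{\beta(\sigma)}(P)$, where $\beta(\sigma)_i=\lambda_i-i+\sigma(i)$ and $r_\beta(P)=\langle L_P,h_\beta\rangle$ is the number of $P$-fillings of shape $\beta$, which is symmetric in $\beta$ because $L_P\in Sym$ by cocommutativity of $\mathcal P$, and then to build a sign-reversing involution on the pairs $(\sigma,F)$ that exchanges the tails of two consecutive rows at the first column where the column-strict condition is violated, with the $P$-tableaux surviving as fixed points. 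There the one delicate point is verifying that the exchanged rows remain linear extensions of the subposets they induce, and it is precisely this verification that consumes $(2+1)$-freeness — again through the linear ordering of the levels $A_1,\dots,A_m$.)
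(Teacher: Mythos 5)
Your proof is correct, but it takes a genuinely different route from the paper's. The paper follows Gasharov's sign-reversing-involution method: it writes $s_\lambda=\sum_{\pi}\mathrm{sgn}(\pi)h_{\pi(\lambda)}$, identifies $\langle L_P,h_\beta\rangle$ with the number of $P$-fillings of shape $\beta$, and cancels all pairs $(\pi,T)$ with $T$ not a $P$-tableau by an involution that swaps the tails of two consecutive rows at the first column where column-strictness fails; $(2+1)$-freeness enters only in verifying that the swapped rows remain linear extensions. You instead use the structural fact that $(2+1)$-free posets are exactly the weak orders $A_1A_2\cdots A_m$ (ordinal sums of antichains), compute $L_P=\bigl(\prod_i a_i!\bigr)h_\mu$ from the paper's example $L_{T_n}=n!\sum_{\alpha\models n}M_\alpha=n!\,h_n$ together with multiplicativity of $L$, and then match $\bigl(\prod_i a_i!\bigr)K_{\lambda\mu}$ with the $P$-tableau count by a direct bijection (project each entry to its antichain index to obtain an SSYT of content $(a_1,\ldots,a_m)$, and record the bijections $A_i\to U^{-1}(i)$). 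All the steps check out: the characterization of $(2+1)$-freeness as transitivity of incomparability, the linear ordering of the incomparability classes, the symmetry of Kostka numbers in the content, and the translation of the row and column conditions into weak row-increase and strict column-increase of indices. Your argument is shorter and yields the sharper closed form $[s_\lambda]L_P=\bigl(\prod_i a_i!\bigr)K_{\lambda\mu}$ (in particular, $h$-positivity and not just $s$-positivity of $L_P$ for this class), but it is entirely tied to the ordinal-sum decomposition and cannot extend beyond weak orders; the paper's involution is the template one would try to push to larger classes, as Gasharov did for $(3+1)$-free posets, which is precisely the generalization the paper raises in its final section.
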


\begin{proof}
  If $\lambda=(\lambda_1, \ldots, \lambda_k)$ and $\pi\in S_k$ is a permutation of $[k]$, denote by $\pi(\lambda)$ the sequence $\{\lambda_{\pi(j)}-\pi(j)+j\}_{j=1}^k$. The definition of the Schur function $s_\lambda$ is equivalent to \[s_\lambda=\sum_{\pi\in S_k}\mathrm{sgn}(\pi)h_{\pi(\lambda)}.\] In this expression, for any integer sequence $\alpha=(\alpha_1, \ldots, \alpha_k)$, we set $h_\alpha=h_{\alpha_1}\cdots h_{\alpha_l}$, where we take $h_r=0$ for $r<0.$ Let $L_P=\sum c_\lambda s_\lambda$. Since $s_\lambda$ form an orthonormal basis for $Sym$, \[c_\lambda=\langle L_P, s_\lambda\rangle=\langle L_P, \sum_{\pi\in S_k}\mathrm{sgn}(\pi)h_{\pi(\lambda)}\rangle=\sum_{\pi\in S_k}\mathrm{sgn}(\pi)\langle L_P, h_{\pi(\lambda)}\rangle.\] Since $\langle m_\lambda, h_\mu\rangle =\delta_{\lambda, \mu}$, we have that $\langle L_P, h_{\pi(\lambda)}\rangle$ is the coefficient of $m_{\pi(\lambda)}$ in $L_P$. It is not difficult to see that this coefficient equals the number of $P-$fillings of shape $\pi(\lambda).$ Let \[A=\{(\pi, T)\mid \pi\in S_k\text{ and $T$ is a $P-$filling of shape $\pi(\lambda)$}\},\]\[B=\{(\pi, T)\in A\mid T \text{ is not a $P-$tableau}\}.\] Then, $c_\lambda=\sum_{(\pi, T)\in A}\mathrm{sgn}(\pi).$ If $T$ is a $P-$tableau of shape $\pi(\lambda)$, then $\pi(\lambda)$ is a partition. Hence, $\pi=Id$ and consequently $\mathrm{sgn}(\pi)=1$. Therefore, it is enough to find an involution $\varphi: B\rightarrow B$ such that if $(\pi', T')=\varphi(\pi, T)$, then $\mathrm{sgn}(\pi')=-\mathrm{sgn}(\pi)$.

  Let $T$ be a filling of shape $\pi(\lambda)$ that is not a tableau. Let $c$ be the smallest integer $j$ such that the condition given in Equation (\ref{uslov}) fails for some $i$ and let $r$ be the largest such $i$. Define $\pi':=\pi\circ (r, r+1)$ and define $T'$ to be a filling of shape $\pi'(\lambda)$ obtained from $T$ by switching the elements in row $r$ after column $c-1$ with those in row $r + 1$ after column $c$. Explicitly, if $T=(a_{i, j})$ and $T'=(b_{i, j})$, then 
  \begin{itemize}
      \item $b_{i, j}=a_{i, j}$ for $(i\neq r \land i\neq r+1)\lor(i=r\land j\leq c-1)\lor (i=r+1\land j\leq c).$
      \item $b_{r, j}=a_{r+1, j+1}$ for $j\geq c$ and $a_{r+1, j+1}$ is defined
      \item $b_{r+1, j}=a_{r, j-1}$ for $j\geq c+1$ and $a_{r, j-1}$ is defined.
  \end{itemize} We define $\varphi (\pi, T)$ to be $(\pi', T')$.
First, we need to show that $(\pi', T')\in B$. Since we only changed the content of rows $r, r+1$, other rows are still linear extensions. 

We first prove that the row $r$ in $T'$ is a linear extension. If $c=1$, this trivially holds. If $c\geq 2$, then for any $k<c$, $a_{r, k}<_P a_{r+1, k}$ due to the fact that $c$ is the smallest integer $j$ for which $a_{i, j}\nless_P a_{i+1, j}$ for some $i$. Since our poset is $(2+1)-$free, $a_{r+1, s}$ for $s>c$ is comparable with at least one of $a_{r, k}$ and $a_{r+1,k}$. But, $a_{r+1, s}\nless_P a_{r+1, k}$ since the row $r+1$ in $T$ is a linear extension. This implies that $a_{r, k}<_P a_{r+1, s}$. Consequently, the row $r$ in $T'$ is a linear extension.

We now focus our attention on the row $r+1$ in $T'$. First suppose that $a_{r+1, c} <_P a_{r, c}$. Since $P$ is $(2+1)-$free and for any $k<c$ $a_{r+1, c}\nless_P a_{r+1, k}$, we have that $a_{r+1, k}<_P a_{r, c}$. Similarly, for any $s\geq c$, $a_{r, s}\nless_P a_{r, c}$ implies that $a_{r+1, k}<_P a_{r, s}$. Therefore, the row $r+1$ is a linear extension. Suppose now that $a_{r+1, c}$ and $a_{r, c}$ are incomparable. If we suppose that the row $r+1$ is not a linear extension in $T'$, then there exists $k\leq c$ and $s\geq c$ such that $a_{r, s}<_P a_{r+1, k}$. Since $a_{r+1, c}\nless_P a_{r+1, k}$, we have that $a_{r, s}<_P a_{r+1, c}$. But, since $a_{r, s}\nless_P a_{r, c}$, we obtain that $a_{r, c}<_P a_{r+1, c}$. Thich is impossible since we supposed that $a_{r, c}$ and $a_{r+1, c}$ are incomparable. Moreover, it contradicts the fact that $T$ does not satisfy the condition given in Equation (\ref{uslov}) for $i=r$ and $j=c$. Hence, in this case, the row $r+1$ is a linear extension as well.
  
Until now, we have proved that $T'$ is a $P-$filling of type $\pi'(\lambda)$. Since $a_{r+1, c+1}\nless_P a_{r+1, c}$, we have that $b_{r, c}\nless_P b_{r+1, c}$. Therefore, the condition from Equation (\ref{uslov}) is not satisfied in $T'$ and hence, $(\pi', T')\in B$. Moreover, it is not difficult to see that in $T'$, $c$ is still the smallest integer $j$ such that (\ref{uslov}) fails for some $i$ and that $r$ is still the largest such $i$. This implies that $\varphi(\pi', T')=(\pi, T)$, which means that $\varphi$ is an involution. The fact that $\mathrm{sgn}(\pi')=-\mathrm{sgn}(\pi)$ completes the proof. 
\end{proof}

\begin{corollary}
    If $P$ is $(2+1)-$free, then $L_P$ is $s-$positive and $s-$integral.
\end{corollary}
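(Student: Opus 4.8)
The plan is to derive this immediately from Theorem \ref{srazvoj}. That theorem identifies, for a $(2+1)$-free poset $P$, the coefficient $[s_\lambda]L_P$ with the number of $P$-tableaux of shape $\lambda$. Since a count of combinatorial objects is always a nonnegative integer, each coefficient in the Schur expansion $L_P = \sum_\lambda ([s_\lambda]L_P)\, s_\lambda$ is a nonnegative integer, which is exactly the assertion that $L_P$ is $s$-positive and $s$-integral.

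Concretely, I would first recall that the set of $P$-tableaux of shape $\lambda$ is finite (it is a subset of the fillings of the diagram of $\lambda$ by the $n$ elements of $P$, of which there are at most $n!$), so its cardinality is a well-defined element of $\mathbb{Z}_{\geq 0}$. Then I would apply Theorem \ref{srazvoj} to conclude $[s_\lambda]L_P \in \mathbb{Z}_{\geq 0}$ for every partition $\lambda \vdash n$, and observe that $[s_\lambda]L_P = 0$ whenever $\lambda$ is not a partition of $n$ (since $L_P$ is homogeneous of degree $n = |P|$). Summing over all $\lambda$ gives the expansion of $L_P$ in the Schur basis with nonnegative integer coefficients.

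There is essentially no obstacle here: all the work has been done in Theorem \ref{srazvoj}, and the corollary is just the translation of its combinatorial conclusion into the language of $u$-positivity and $u$-integrality introduced in the Preliminaries. The only point worth stating explicitly is that finiteness of the count guarantees integrality, so that the two claimed properties follow together.

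\begin{proof}
  By Theorem \ref{srazvoj}, for every partition $\lambda$ the coefficient $[s_\lambda]L_P$ equals the number of $P$-tableaux of shape $\lambda$. A $P$-tableau is in particular a filling of the diagram of $\lambda$ by the finitely many elements of $P$, so there are only finitely many of them; hence $[s_\lambda]L_P$ is a nonnegative integer. Moreover $L_P$ is homogeneous of degree $|P|$, so $[s_\lambda]L_P = 0$ unless $\lambda \vdash |P|$. Therefore $L_P = \sum_{\lambda} ([s_\lambda]L_P)\, s_\lambda$ is a sum of Schur functions with nonnegative integer coefficients, i.e. $L_P$ is $s$-positive and $s$-integral.
\end{proof}
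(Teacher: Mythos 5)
Your proof is correct and follows exactly the route the paper intends: the corollary is an immediate consequence of Theorem \ref{srazvoj}, since a coefficient that counts a finite set of $P$-tableaux is necessarily a nonnegative integer. The paper leaves this unstated, and your spelled-out version (including the remark on homogeneity) is a faithful and complete justification.
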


We note that even for the smallest digraph that is not $(2+1)-$free, this is not true. Namely, if $P$ is a poset from Example \ref{2+1}, then $L_P=3s_3+2s_{2,1}-s_{1,1,1}$. 

Recall that $h(P)$ denotes the length of the longest chain in $P$. In terms of graph terminology, $h(P)$ is actually $\alpha(\mathrm{inc}(P))$, where $\alpha(G)$ is the \textit{independence number} of $G$, i.e. the size of the largest discrete subgraph of $G$.

\begin{corollary}
    If $P_1$ and $P_2$ are $(2+1)-$free graphs such that $L_{P_1}=L_{P_2}$, then $h(P_1)=h(P_2)$.
\end{corollary}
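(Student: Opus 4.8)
The plan is to recover $h(P)$ from the Schur expansion of $L_P$ and then invoke the hypothesis. Since the Schur functions form a basis of $Sym$, the equality $L_{P_1}=L_{P_2}$ forces $[s_\lambda]L_{P_1}=[s_\lambda]L_{P_2}$ for every partition $\lambda$, and by Theorem~\ref{srazvoj} this says that $P_1$ and $P_2$ have the same number of $P$-tableaux of every shape; in particular the set of shapes $\lambda$ for which a $P_i$-tableau exists does not depend on $i$. So the corollary reduces to the identity
\[
h(P)=\max\{\,l(\lambda)\mid\text{there is a }P\text{-tableau of shape }\lambda\,\},
\]
valid for every poset $P$; the $(2+1)$-free hypothesis is needed only to apply Theorem~\ref{srazvoj}. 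Taking the maximum of $l(\lambda)$ over the common set of shapes then yields $h(P_1)=h(P_2)$.

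One inequality is immediate: if $T=(a_{i,j})$ is a $P$-tableau of shape $\lambda$ with $l(\lambda)=m$ rows, then condition~(\ref{uslov}) applied to the first column gives $a_{1,1}<_P a_{2,1}<_P\cdots<_P a_{m,1}$, a chain with $m$ elements, whence $h(P)\geq m$; maximizing over all $P$-tableaux gives $h(P)\geq\max l(\lambda)$. For the reverse inequality I would prove, for an arbitrary poset $P$, that there exists a $P$-tableau with exactly $h(P)$ rows, by induction on $n=|P|$, the base case $n=0$ being the empty tableau.

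For the inductive step I would split according to the number of maximal elements of $P$. If $P$ has at least two maximal elements, fix a longest chain $C$; its top element $m$ is maximal, and for any other maximal element $p$ we have $p\notin C$ (the non-top elements of $C$ are not maximal), so $C$ survives in $P\setminus\{p\}$ and $h(P\setminus\{p\})=h(P)$. By induction $P\setminus\{p\}$ has a tableau $T'$ with $h(P)$ rows, and appending $p$ at the right end of the first row of $T'$ gives a $P$-tableau with $h(P)$ rows: the first row stays a linear extension because $p$ is maximal in $P$ and not already present, the shape stays a partition, and the added cell is alone in its column. If instead $P$ has a unique maximal element $\hat1$, then the top of every longest chain is $\hat1$, so $h(P\setminus\{\hat1\})=h(P)-1$; by induction $P\setminus\{\hat1\}$ has a tableau $T'$ with $h(P)-1$ rows, and adjoining a new bottom row consisting of the single entry $\hat1$ gives a $P$-tableau with $h(P)$ rows, since $\hat1$ lies above every element of $P$ — in particular above the last entry of the first column of $T'$ — so the first column becomes a chain with $h(P)$ elements, the shape stays a partition, and condition~(\ref{uslov}) is not disturbed in any other column.

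All the verifications here are short checks built from condition~(\ref{uslov}) and maximality, so the main obstacle is organizing the dichotomy on maximal elements and tracking the height correctly: in the unique-maximal-element case deletion drops the height by one and the tableau must be grown by a whole new row, whereas in the other case the height is preserved and the tableau is grown by lengthening its first row. Once these two growth operations are checked to preserve every defining condition of a $P$-tableau, the induction closes and the identity — hence the corollary — follows.
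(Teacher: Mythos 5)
Your proposal is correct and follows the same route as the paper, which simply asserts that Theorem \ref{srazvoj} implies $h(P)=\max\{l(\lambda)\mid [s_{\lambda}]L_P\neq 0\}$ and stops there. You supply the details the paper omits — the first-column chain for one inequality and the induction on $|P|$ (splitting on whether the maximal element is unique) producing a $P$-tableau with exactly $h(P)$ rows for the other — and both verifications are sound.
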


\begin{proof}
    The proof follows immediately from Theorem \ref{srazvoj} since it implies that $h(P)=\textrm{max}\{l(\lambda)\mid [s_{\lambda}]L_P\neq 0\}$.
\end{proof}

In the upcoming proof, we will need a linear map $\varphi : QSym\rightarrow \mathbb{Q}[t]$ defined on the basis of fundamental functions by \[
 \varphi(F_I) =
  \begin{cases}
  t(t-1)^i & \text{ if } I=\{i+1, \ldots, n-1\}\\
  0 & \text{ otherwise.} 
  \end{cases}
  \]
This function is convenient for detecting elementary functions since $\varphi(e_{\lambda})=t^{l(\lambda)}$ \cite{SR}.
Unfortunately, we were unable to find the combinatorial interpretation of the coefficients of $L_P$ in the elementary basis. However, we managed to deduce the following.

\begin{corollary}
    If $P$ is a poset on $[n]$, $I_i=\{i+1, \ldots, n-1\}$ and $L_P=\sum c_\lambda e_\lambda$, then \[\sum_{l(\lambda)=k}c_\lambda=\sum_{i\geq k-1}\left ((-1)^{i-k+1}\binom{i}{k-1}\sum_{\omega\in\Sigma_P(I_i)}\chi_{\mathrm{Comp}(P, \omega)}(I_i)\right ).\]
\end{corollary}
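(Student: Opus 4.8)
The plan is to apply the linear map $\varphi\colon QSym\to\mathbb{Q}[t]$ introduced just above the statement to $L_P$ and to compute the resulting polynomial in $t$ in two different ways. On one hand, starting from the elementary expansion $L_P=\sum_\lambda c_\lambda e_\lambda$ and using the identity $\varphi(e_\lambda)=t^{l(\lambda)}$ of \cite{SR}, linearity of $\varphi$ gives $\varphi(L_P)=\sum_\lambda c_\lambda t^{l(\lambda)}=\sum_{k\geq 1}\bigl(\sum_{l(\lambda)=k}c_\lambda\bigr)t^k$. Hence the left-hand side $\sum_{l(\lambda)=k}c_\lambda$ of the claim is exactly the coefficient of $t^k$ in $\varphi(L_P)$.

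On the other hand, I would compute the same polynomial starting from the fundamental expansion. By Theorem \ref{fkoeficijent} (equivalently Equation (\ref{frazvoj})), $L_P=\sum_{I\subseteq[n-1]}\bigl(\sum_{\omega\in\Sigma_P(I)}\chi_{\mathrm{Comp}(P,\omega)}(I)\bigr)F_I$. By definition $\varphi(F_I)=0$ unless $I=I_i=\{i+1,\dots,n-1\}$ for some $0\leq i\leq n-1$, in which case $\varphi(F_{I_i})=t(t-1)^i$; so only the sets $I_i$ contribute and
\[
\varphi(L_P)=\sum_{i=0}^{n-1}\Bigl(\sum_{\omega\in\Sigma_P(I_i)}\chi_{\mathrm{Comp}(P,\omega)}(I_i)\Bigr)\,t(t-1)^i.
\]

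It then remains to extract the coefficient of $t^k$ from this last expression. Expanding $t(t-1)^i=\sum_{j=0}^{i}\binom{i}{j}(-1)^{i-j}t^{j+1}$, the coefficient of $t^k$ in $t(t-1)^i$ is $(-1)^{i-k+1}\binom{i}{k-1}$, which vanishes unless $k-1\leq i$. Summing over $i$ (the range being effectively $k-1\leq i\leq n-1$) shows that the coefficient of $t^k$ in $\varphi(L_P)$ equals $\sum_{i\geq k-1}(-1)^{i-k+1}\binom{i}{k-1}\sum_{\omega\in\Sigma_P(I_i)}\chi_{\mathrm{Comp}(P,\omega)}(I_i)$. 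Equating this with the coefficient obtained from the elementary expansion in the first paragraph yields the stated formula.

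There is no genuine obstacle in this argument: once $\varphi(L_P)$ is written down in the two ways above, the rest is bookkeeping, and the substantive input is the cited identity $\varphi(e_\lambda)=t^{l(\lambda)}$. The one place to be careful is the binomial extraction, specifically tracking the index shift $j\mapsto j+1$ caused by the leading factor $t$ in $t(t-1)^i$, which is what produces $\binom{i}{k-1}$ rather than $\binom{i}{k}$ and the sign $(-1)^{i-k+1}$; one should also check that the sets $I_i$ for $0\le i\le n-1$ are pairwise distinct (with $I_{n-1}=\emptyset$), so that no term is double-counted when passing from the $F$-expansion to the polynomial in $t$.
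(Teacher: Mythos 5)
Your proposal is correct and follows essentially the same route as the paper: apply $\varphi$ to the elementary expansion using $\varphi(e_\lambda)=t^{l(\lambda)}$, apply it to the fundamental expansion from Theorem \ref{fkoeficijent} so that only the sets $I_i$ survive, expand $t(t-1)^i$ binomially, and compare coefficients of $t^k$. The index shift $k=j+1$ you flag is exactly the substitution the paper makes.
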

\begin{proof}
    We apply $\varphi$ to both sides of $L_P=\sum c_\lambda e_\lambda$. The coefficient of $t^k$ in $\varphi(\sum c_\lambda e_\lambda)$ is exactly $\sum_{l(\lambda)=k} c_{\lambda}.$ On the other hand,  Theorem \ref{fkoeficijent} tells us \[\varphi(L_P)=\varphi\left (\sum_{I\subseteq [n-1]}\sum_{\omega\in\Sigma_P(I)}\chi_{\mathrm{Comp}(P, \omega)}(I)F_I\right )=\]\[=\sum_{I=I_i}\sum_{\omega\in\Sigma_P(I)}\chi_{\mathrm{Comp}(P, \omega)}(I)t(t-1)^i.\]
    Since $t(t-1)^i=\sum_{j=0}^i\binom{i}{j}(-1)^{i-j}t^{j+1}$, we have  \[\varphi(L_P)=\sum_{I=I_i}\sum_{\omega\in\Sigma_P(I)}\sum_{j=0}^i\binom{i}{j}\chi_{\mathrm{Comp}(P, \omega)}(I) (-1)^{i-j}t^{j+1}.\] Therefore, by setting $k=j+1$, we obtain that the coefficient of $t^k$ in $\varphi(L_P)$ is 
    \[\sum_{i\geq k-1}\left ((-1)^{i-k+1}\binom{i}{k-1}\sum_{\omega\in\Sigma_P(I_i)}\chi_{\mathrm{Comp}(P, \omega)}(I_i)\right ),\] which completes the proof.
\end{proof}

\begin{example}
 If $P$ is a poset on $[n]$ and if $L_P=\sum c_{\lambda}e_\lambda$, then $c_{(1, \ldots, 1)}=\zeta(P)$. This is always true for functions that are produced by some CHA. However, this is also obtainable from the previous theorem, since $k=l(\lambda)=n$ implies that the inner sum runs through the set $\Sigma_P(\emptyset)$, which is exactly the set of linear extensions of $P$ and since $\chi_{\mathrm{Comp}(P, \omega)}(\emptyset)=1$. On the other hand,\[
 c_{(n)}=\sum_{i=0}^{n-1}\sum_{\omega\in\Sigma_P(I_i)}\chi_{\mathrm{Comp}(P, \omega)}(I_i).
 \]
\end{example}

\subsection{Decomposition of a poset}

In this subsection, we acquire a new way to decompose any poset into a finite formal sum of mountains from $\mathcal{R}$. This decomposition is multiplicative and is motivated by an interesting character on $\mathcal{P}$ that acts on a poset $P$ by
\[\zeta_1(P)=\sum_{\omega\in\mathrm{Rev}(P)}\chi_{\mathrm{Comp}(P, \omega)}\].
\begin{theorem} \label{karakteric}
   Function $\zeta_1$ is a character on $\mathcal{P}$. 
   %If $\Psi:(\mathcal{P}, \zeta_1)\rightarrow(Qsym, \zeta)$ is a universal morphism of CHAs, then \[S(L_P)=(-1)^{|P|}\Psi(P^*).\]
\end{theorem}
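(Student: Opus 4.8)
The goal is to show that $\zeta_1$ is an algebra homomorphism $\mathcal{P}\to\mathbf{k}$, i.e. $\zeta_1(\emptyset)=1$ and $\zeta_1(P_1P_2)=\zeta_1(P_1)\zeta_1(P_2)$ for any two posets $P_1,P_2$. The value on $\emptyset$ is immediate since the only listing is empty and $\chi$ of the trivial plucking is $1$. The plan is to analyze how $P$-reversing listings of the ordinal sum $P_1P_2$ decompose. A key structural observation: if $P=P_1P_2$ with $|P_1|=m$, $|P_2|=n$, then in any $P$-listing $\omega$, since every element of $P_1$ lies below every element of $P_2$, a linear extension of a consecutive block $(\omega_i,\dots,\omega_j)$ of $\omega$ forces all the $P_1$-elements of that block to precede all its $P_2$-elements. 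I would first show that $\omega\in\mathrm{Rev}(P_1P_2)$ can hold only when $\omega$ has the form $\omega^{(1)}\omega^{(2)}$ with $\omega^{(1)}$ a $P_1$-listing of the first $m$ positions and $\omega^{(2)}$ a $P_2$-listing of the last $n$ positions — otherwise there is some border point forced between a $P_1$-element appearing after a $P_2$-element, and at position $m$ (the interface) one checks that $m$ can never be a minimal border point, so completeness of the plucking fails. Thus $\mathrm{Rev}(P_1P_2)$ is (after this splitting) in bijection with a subset of $\mathrm{Rev}(P_1)\times\mathrm{Rev}(P_2)$ — actually I expect exactly $\{\omega^{(1)}\omega^{(2)} : \omega^{(1)}\in\mathrm{Rev}(P_1),\ \omega^{(2)}\in\mathrm{Rev}(P_2)\}$, again because each of the two halves must itself be reversing for every index in its range to be covered.

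Next I would compute $\chi_{\mathrm{Comp}(P_1P_2,\omega)}$ for such a split listing $\omega=\omega^{(1)}\omega^{(2)}$. The border points of $\omega$ in $P_1P_2$ are: the border points of $\omega^{(1)}$ in $P_1$ (sitting in $\{1,\dots,m-1\}$), the point $m$ itself (always a border point, since $(\omega^{(1)})$ is a linear extension of $P_1$ and $(\omega^{(2)})$ of $P_2$), and $m$ plus the border points of $\omega^{(2)}$ in $P_2$ (sitting in $\{m+1,\dots,m+n-1\}$). Because the ordinal-sum relation puts every $P_1$ element below every $P_2$ element, a set $S\subseteq[m+n-1]$ is a set of border points of $\omega$ iff $S\cap[m-1]$ is one for $\omega^{(1)}$, $m\in S$, and $(S\cap\{m+1,\dots\})-m$ is one for $\omega^{(2)}$: in short $\mathrm{Comp}(P_1P_2,\omega)$ splits as a product plucking supported on subsets containing the element $m$. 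I would then invoke the multiplicativity of the Euler character $\chi$ established in Section~3 — namely that $\chi(P\times Q)=\chi(P)\chi(Q)$ for mountains, which follows from $\Delta(P_1\times P_2)=\Delta(P_1)\times\Delta(P_2)$ and the way $\overline\zeta,\zeta$ are defined — to conclude $\chi_{\mathrm{Comp}(P_1P_2,\omega)}=\chi_{\mathrm{Comp}(P_1,\omega^{(1)})}\cdot\chi_{\mathrm{Comp}(P_2,\omega^{(2)})}$. (The forced coordinate $m$ contributes a factor $\chi$ of a one-element plucking, which is $1$.) Summing over $\omega$ then factors as
\[
\zeta_1(P_1P_2)=\sum_{\omega^{(1)}\in\mathrm{Rev}(P_1)}\sum_{\omega^{(2)}\in\mathrm{Rev}(P_2)}\chi_{\mathrm{Comp}(P_1,\omega^{(1)})}\chi_{\mathrm{Comp}(P_2,\omega^{(2)})}=\zeta_1(P_1)\zeta_1(P_2).
\]

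The main obstacle, I expect, is the bijective/structural claim that the reversing listings of $P_1P_2$ are exactly the concatenations of a reversing listing of $P_1$ with one of $P_2$, together with the compatibility of the plucking structure under this splitting — one must carefully rule out any ``mixed'' listing by exhibiting an uncovered index $i$, and must verify that both the sets-of-border-points correspondence and the identification of minimal sets really decouple across the interface. Once that combinatorial decoupling is in place, the rest is a clean application of the multiplicativity of $\chi$ from Section~3 and a reindexing of the sum. A subtle point worth double-checking is the case where $P_1$ or $P_2$ is a single point (so its only listing is not reversing): then $\zeta_1$ of it is an empty sum $=0$, and indeed $\mathrm{Rev}(P_1P_2)$ is empty in that situation — because the index at the interface, or the single coordinate inside the point-factor, cannot be covered — so both sides vanish, consistent with multiplicativity.
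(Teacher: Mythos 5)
Your overall strategy (decompose $\mathrm{Rev}(P_1P_2)$ into concatenations, show $\mathrm{Comp}(P_1P_2,\omega)$ splits as a product of pluckings, and invoke multiplicativity of $\chi$ on Cartesian products of mountains) is viable -- it is essentially how the paper later proves that $\Phi$ is an algebra morphism, whereas the paper's own proof of this theorem is entirely different: it applies the antipode of $QSym$ to the $F$-expansion of $L_P$, identifies $[F_\emptyset]S(L_P)$ with $(-1)^n\zeta_1(P^*)$ via Lemma \ref{lemica}, and uses that $S$ is an algebra antimorphism and $[F_\emptyset]$ is multiplicative. However, your key structural claim is stated backwards, and this is a genuine error, not a cosmetic one. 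In the ordinal sum $P_1P_2$ every element of $P_1$ lies \emph{below} every element of $P_2$, and a listing is reversing precisely when every index $i$ admits $j\le i<k$ with $\omega_k\le_{P_1P_2}\omega_j$, i.e.\ a ``descent'' crossing $i$. If $\omega$ lists all of $P_1$ first and all of $P_2$ last, as you claim, then no element after the interface position $m$ is ever $\le_{P_1P_2}$ an element before it, so $m$ is covered by no minimal border set and such a listing is \emph{never} reversing (for nonempty factors). The correct statement is the opposite: $\omega\in\mathrm{Rev}(P_1P_2)$ forces all $P_2$-elements to precede all $P_1$-elements, the interface index $|P_2|$ then lies in \emph{every} border set, and $\mathrm{Rev}(P_1P_2)$ is in bijection with $\mathrm{Rev}(P_2)\times\mathrm{Rev}(P_1)$. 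As written, your decomposition would give $\mathrm{Rev}(P_1P_2)=\emptyset$ and $\zeta_1(P_1P_2)=0$ for all nonempty $P_1,P_2$, contradicting $\zeta_1(C_2)=1$.

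The same confusion surfaces in your closing ``sanity check'': for the one-element poset $C_1$ we have $[n-1]=\emptyset$, the unique listing has plucking $\{\emptyset\}$, which is vacuously complete, so that listing \emph{is} reversing and $\zeta_1(C_1)=1$ (not $0$). This is forced by the paper's computation $\zeta_1(C_n)=1$ together with $C_n=(C_1)^n$; your claim that $\zeta_1(C_1)=0$ and that $\mathrm{Rev}(C_1P_2)=\emptyset$ would make $\zeta_1$ vanish on all chains. Once the order of concatenation is corrected (and the base case fixed), the rest of your outline -- the product decomposition of the plucking with the always-present interface coordinate dropped, the multiplicativity of $\chi$ coming from $D_{M_1\times M_2}(p,q)=D_{M_1}(p)\times D_{M_2}(q)$, and the factorization of the sum -- does go through and yields a correct, more combinatorial proof than the paper's antipode argument.
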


\begin{proof}
    If we apply antipode $S$ of $QSym$ to both sides of Equation (\ref{frazvoj}), while noting that $S(F_I)=(-1)^{n}F_{(I^{\mathrm{op}})^c},$ we get
\[S(L_P)=(-1)^n\sum_{I\subseteq [n-1]}\sum_{\omega\in\Sigma_P(I)}\sum_{\substack{J\subseteq I\\
                  J\in\mathrm{Comp}(P, \omega) }}(-1)^{|I|-|J|}F_{(I^{\mathrm{op}})^c}.\]
Changing the summing variable in the first sum to $I^{\mathrm{op}}$, yields \[
S(L_P)=(-1)^n\sum_{I\subseteq [n-1]}\sum_{\omega\in\Sigma_P(I^{\mathrm{op}})}\sum_{\substack{J\subseteq I^{\mathrm{op}}\\
                  J\in\mathrm{Comp}(P, \omega) }}(-1)^{|I|-|J|}F_{I^c}
\]
However, Lemma \ref{lemica} tells us that $\Sigma_P(I^{\mathrm{op}})=\Sigma_{P^*}(I)^{\mathrm{rev}}$ and hence
\[S(L_P)=(-1)^n\sum_{I\subseteq [n-1]}\sum_{\omega\in\Sigma_{P^*}(I)^{\mathrm{rev}}}\sum_{\substack{J\subseteq I^{\mathrm{op}}\\
                  J\in\mathrm{Comp}(P, \omega) }}(-1)^{|I|-|J|}F_{I^c}\]
Now we change the summing variable of the middle sum to $\omega^{\mathrm{rev}}$ and get
\[S(L_P)=(-1)^n\sum_{I\subseteq [n-1]}\sum_{\omega\in\Sigma_{P^*}(I)}\sum_{\substack{J\subseteq I^{\mathrm{op}}\\
                  J\in\mathrm{Comp}(P, \omega^{\mathrm{rev}}) }}(-1)^{|I|-|J|}F_{I^c}.\]
Finally, changing the summing variable of the inner sum to $J^{\mathrm{op}}$ gives
\[S(L_P)=(-1)^n\sum_{I\subseteq [n-1]}\sum_{\omega\in\Sigma_{P^*}(I)}\sum_{\substack{J^{\mathrm{op}}\subseteq I^{\mathrm{op}}\\
                  J^{\mathrm{op}}\in\mathrm{Comp}(P, \omega^{\mathrm{rev}}) }}(-1)^{|I|-|J|}F_{I^c}.\]
Since Lemma \ref{lemica} tells us that $\mathrm{Comp}(P, \omega^{\mathrm{rev}})^{\mathrm{op}}=\mathrm{Comp}(P^*, \omega)$ and since $J\subseteq I\iff J^{\mathrm{op}}\subseteq I^{\mathrm{op}}$, we have that \[S(L_P)=(-1)^n\sum_{I\subseteq [n-1]}\sum_{\omega\in\Sigma_{P^*}(I)}\sum_{\substack{J\subseteq I\\
                  J\in\mathrm{Comp}(P^*, \omega) }}(-1)^{|I|-|J|}F_{I^c}.\]
Consequently, the coefficient of $F_\emptyset$ in $S(L_P)$ is obtained from the previous formula by setting $I=[n-1]$: 

\[[F_{\emptyset}]S(L_P)=(-1)^{n}\sum_{\omega\in\Sigma_{P^*}}\sum_{J\in\mathrm{Comp}(P^*, \omega)}(-1)^{|[n-1]|-|J|}=\]\[(-1)^{n}\sum_{\omega\in\Sigma_{P^*}}\chi_{\mathrm{Comp}(P^*, \omega)}([n-1])=(-1)^{n}\sum_{\omega\in\Sigma_{P^*}}\chi_{\mathrm{Comp}(P^*, \omega)}.\]   
Since $S$ is an antimorphism of algebras and since $[F_\emptyset] (UV)=[F_\emptyset]U[F_\emptyset]V$ for any $U, V\in QSym$, we have that 
$[F_\emptyset]S(L_{P_2P_1})=[F_\emptyset]S(L_{P_2}L_{P_1})=[F_\emptyset](S(L_{P_1})S(L_{P_2}))=[F_\emptyset]S(L_{P_1})[F_\emptyset]S(L_{P_2}).$ Explicitly,  \[\sum_{\omega\in\Sigma_{(P_2P_1)^*}}\chi_{\mathrm{Comp}((P_2P_1)^*, \omega)}=\sum_{\omega\in\Sigma_{P_1^*}}\chi_{\mathrm{Comp}(P_1^*, \omega)}\sum_{\omega\in\Sigma_{P_2^*}}\chi_{\mathrm{Comp(P_2^*, \omega)}}.\] If we rename $P_1^*$ with $P_1$ and $P_2^*$ with $P_2$, since $(P_2P_1)^*=P_1^*P_2^*$, we obtain
\[\sum_{\omega\in\Sigma_{P_1P_2}}\chi_{\mathrm{Comp}(P_1P_2, \omega)}=\sum_{\omega\in\Sigma_{P_1}}\chi_{\mathrm{Comp}(P_1, \omega)}\sum_{\omega\in\Sigma_{P_2}}\chi_{\mathrm{Comp(P_2, \omega)}}.\]
As we have already noted in Lemma \ref{nestajanje}, in expression $\sum_{\omega\in\Sigma_P}\chi_{\mathrm{Comp}(P, \omega)}$, the only terms that survive correspond to $P-$reversing listings and therefore, we have that \[\sum_{\omega\in\mathrm{Rev}(P_1P_2)}\chi_{\mathrm{Comp}(P_1P_2, \omega)}=\sum_{\omega\in\mathrm{Rev}({P_1)}}\chi_{\mathrm{Comp}(P_1, \omega)}\sum_{\omega\in\mathrm{Rev}(P_2)}\chi_{\mathrm{Comp(P_2, \omega)}}.\] Therefore, $\zeta_1$ is multiplicative, which completes the proof.
\end{proof}

\begin{example}
    If $T_n$ is the trivial poset on $[n]$ for $n\geq 2$, then any listing $\omega$ has $\emptyset$ as the only minimal set of border points since any listing is already a linear extension. Hence, $\mathrm{Rev}(T_n)=\emptyset$ and therefore $\zeta_1(P)=0$.
\end{example}

\begin{example}
    If $C_n$ is the chain on $[n]$ induced by the standard order on $\mathbb{N}$, the only $\omega\in\mathrm{Rev}(C_n)$ is $\omega=(n, n-1, \ldots, 1)$. In that case, $\mathrm{Comp}(C_n, \omega)=\{[n-1]\}$ is a one-element poset. If $\omega$ is some other listing, then there exists $i$ such that $\omega_i<\omega_{i+1}$, which means that $i$ never appears in any minimal set of border points in $\mathrm{Comp}(C_n, \omega)$. Therefore, $\zeta_1(C_n)=\chi_{\{[n-1]\}}=1$. This could also be deduced from the fact that $C_n=(C_1)^n$ and the multiplicativity of $\zeta_1.$
\end{example}

Motivated by the connection between the characters $\zeta_1$ on $\mathcal{P}$ and $\chi$ on $\widetilde{\mathcal{R}}$, we are going to focus on a linear map $\Phi: \mathcal{P}\longrightarrow\widetilde{\mathcal{R}}$ that is given by its action on posets \[P\mapsto \sum_{\omega\in\mathrm{Rev}(P)}\mathrm{Comp}(P, \omega).\] This decomposes any poset into a finite sum of mountains. Beside character compatibility, this decomposition has further nice algebraic properties.

\begin{theorem}
    Function $\Phi:\mathcal{P}\rightarrow\mathcal{R}$ defined previously is an algebra morphism such that $\zeta_1(P)=\chi(\Psi(P)).$
\end{theorem}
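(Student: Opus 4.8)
The plan is to prove the two assertions in turn. The identity $\zeta_1(P)=\chi(\Phi(P))$ is immediate from the definitions: $\chi$ is $\mathbf{k}$-linear on $\widetilde{\mathcal{R}}$ and $\Phi(P)=\sum_{\omega\in\mathrm{Rev}(P)}\mathrm{Comp}(P,\omega)$ is a sum of mountains, so $\chi(\Phi(P))=\sum_{\omega\in\mathrm{Rev}(P)}\chi(\mathrm{Comp}(P,\omega))=\sum_{\omega\in\mathrm{Rev}(P)}\chi_{\mathrm{Comp}(P,\omega)}=\zeta_1(P)$; and the empty poset has a unique, vacuously reversing, listing whose plucking of sets of border points is $\{\emptyset\}$, the unit $\{\star\}$ of $\widetilde{\mathcal{R}}$. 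So the work is in showing $\Phi(P_1P_2)=\Phi(P_1)\times\Phi(P_2)$ for posets $P_1$ on $[m]$ and $P_2$ on $[n]$ (the case where one of them is empty being covered by the unit condition just checked). Call $1,\dots,m$ the $P_1$-elements and $m+1,\dots,m+n$ the $P_2$-elements of $P_1P_2$, and recall that in the ordinal sum every $P_1$-element is strictly below every $P_2$-element, a $P_1$-element is below nothing outside the $P_1$-elements, a $P_2$-element is below nothing outside the $P_2$-elements, and restriction to the $P_1$-elements (resp.\ $P_2$-elements) recovers $P_1$ (resp.\ $P_2$ after standardization).

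The crux is to show that if $\omega\in\mathrm{Rev}(P_1P_2)$ then all $P_2$-elements precede all $P_1$-elements in $\omega$, i.e.\ $\omega=\omega^{(2)}\omega^{(1)}$ with $\omega^{(2)}$ a $P_2$-listing and $\omega^{(1)}$ a $P_1$-listing. I would use the following reformulation: in a plucking $\mathcal{A}$, an element $i$ of the ground set lies in a minimal member of $\mathcal{A}$ iff there is $S\in\mathcal{A}$ with $i\in S$ and $S\setminus\{i\}\notin\mathcal{A}$ (otherwise a minimal set below $S$ would lie in $S\setminus\{i\}$, forcing $S\setminus\{i\}\in\mathcal{A}$). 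Applied to $\mathcal{A}=\mathrm{Comp}(P_1P_2,\omega)$, position $i$ lies in a minimal set of border points iff some compatible composition has, as its two parts adjacent to position $i$, blocks $(\omega_a,\dots,\omega_i)$ and $(\omega_{i+1},\dots,\omega_b)$ that are linear extensions but whose concatenation $(\omega_a,\dots,\omega_b)$ is not, i.e.\ there is a ``straddling'' pair $a\le c\le i<d\le b$ with $\omega_d<_{P_1P_2}\omega_c$. Now if $\omega_i$ were a $P_1$-element and $\omega_{i+1}$ a $P_2$-element: if $\omega_c$ is a $P_1$-element then so is $\omega_d$, hence $d\neq i+1$, hence $\omega_d<_{P_1P_2}\omega_{i+1}$ is already a violation inside $(\omega_{i+1},\dots,\omega_b)$; if $\omega_c$ is a $P_2$-element then $c<i$, and $\omega_i<_{P_1P_2}\omega_c$ is already a violation inside $(\omega_a,\dots,\omega_i)$. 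Either way a block fails to be a linear extension --- a contradiction --- so such an $i$ lies in no minimal set and $\omega\notin\mathrm{Rev}(P_1P_2)$.

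Next I would fix $\omega=\omega^{(2)}\omega^{(1)}$ and factor $\mathrm{Comp}(P_1P_2,\omega)$ across the boundary. No block of a compatible composition can straddle the boundary between $\omega^{(2)}$ and $\omega^{(1)}$ (it would begin with a $P_2$-element and later contain a $P_1$-element below it), so $n\in\mathrm{set}(\alpha)$ for every $\alpha\in\mathrm{Comp}(P_1P_2,\omega)$; using that $P_1P_2$ restricts correctly to the two halves, $\mathrm{Comp}(P_1P_2,\omega)=\{\beta\gamma:\beta\in\mathrm{Comp}(P_2,\omega^{(2)}),\ \gamma\in\mathrm{Comp}(P_1,\omega^{(1)})\}$ with $\beta\gamma$ the concatenation of compositions. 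Since $\mathrm{set}(\beta\gamma)=\mathrm{set}(\beta)\cup\{n\}\cup(n+\mathrm{set}(\gamma))$ with the three pieces in disjoint ranges of $[m+n-1]$, the map $(\beta,\gamma)\mapsto\mathrm{set}(\beta\gamma)$ is a poset isomorphism $\mathrm{Comp}(P_2,\omega^{(2)})\times\mathrm{Comp}(P_1,\omega^{(1)})\xrightarrow{\ \sim\ }\mathrm{Comp}(P_1P_2,\omega)$, hence an isomorphism of mountains. Passing to minimal elements, the union of the minimal sets of border points of $\omega$ is the disjoint union of $\bigcup(\text{min. sets of }\mathrm{Comp}(P_2,\omega^{(2)}))$, $\{n\}$, and $n+\bigcup(\text{min. sets of }\mathrm{Comp}(P_1,\omega^{(1)}))$, which is all of $[m+n-1]$ exactly when $\omega^{(2)}\in\mathrm{Rev}(P_2)$ and $\omega^{(1)}\in\mathrm{Rev}(P_1)$.

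Putting these together, $\omega\mapsto(\omega^{(2)},\omega^{(1)})$ is a bijection $\mathrm{Rev}(P_1P_2)\to\mathrm{Rev}(P_2)\times\mathrm{Rev}(P_1)$ under which $\mathrm{Comp}(P_1P_2,\omega)\cong\mathrm{Comp}(P_2,\omega^{(2)})\times\mathrm{Comp}(P_1,\omega^{(1)})$ in $\widetilde{\mathcal{R}}$, so $\Phi(P_1P_2)=\sum\mathrm{Comp}(P_2,\omega^{(2)})\times\mathrm{Comp}(P_1,\omega^{(1)})=\Phi(P_2)\times\Phi(P_1)=\Phi(P_1)\times\Phi(P_2)$ by commutativity of the Cartesian product, which completes the argument. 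The main obstacle is the step in the second paragraph excluding reversing listings that interleave $P_1$- and $P_2$-elements: it is the only place where the order structure of the ordinal sum interacts with the definition of $\mathrm{Rev}$ in a non-formal way, and it is what makes the ``merge'' reformulation of minimal border points necessary. The remaining steps are routine bookkeeping about concatenating compositions and factoring Boolean intervals.
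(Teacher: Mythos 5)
Your proof is correct and takes essentially the same route as the paper's: show that a reversing listing of the ordinal sum must place all $P_2$-elements before all $P_1$-elements, factor $\mathrm{Comp}(P_1P_2,\omega)$ as a Cartesian product of the two smaller pluckings, and finish by commutativity of $\times$. Your merge-criterion argument for why an interleaving position lies in no minimal set of border points carefully justifies a step the paper only asserts.
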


\begin{proof}
    The definition of $\zeta_1$ immediately implies that $\zeta_1(P)=\chi(\Phi(P))$. In order to prove multiplicativity, we first need to see that for any two posets $P$ and $Q$ and a $PQ-$reversing listing $\omega=(\omega_1, \ldots, \omega_{|P|+|Q|})$, $\omega_1, \ldots, \omega_Q\in Q$ and $\omega_{|Q|+1}, \ldots\omega_{|P|+|Q|}\in P$. If not, there would exist $i$ such that $\omega_i\in P$ and $\omega_{i+1}\in Q$, but then $i$ would not be in any minimal set from $\mathrm{Comp}(PQ, \omega)$. Therefore, any set of border points of $\omega\in\mathrm{Rev}(PQ)$ must contain $|Q|$ since $\omega_{|Q|+1}<_{PQ}\omega_{|Q|}$. It is not difficult to see that for such $\omega$, $\omega'=(\omega_1, \ldots, \omega_{|Q|})\in\mathrm{Rev}(Q)$ and $\omega''=(\omega_{|Q|+1}, \ldots, \omega_{|P|+|Q|})\in\mathrm{Rev}(P)$. Consequently, any listing $\omega\in\mathrm{Rev}(PQ)$ uniquely splits into a pair of listings $(\omega', \omega'')\in\mathrm{Rev}(Q)\times\mathrm{Rev}(P)$. For such $\omega$, $\mathrm{Comp}(PQ, \omega)\cong\mathrm{Comp}(Q, \omega')\times\mathrm{Comp}(P, \omega'')$ by \[\{i_1, \ldots, i_l, |Q|, j_1, \ldots, j_s\}\mapsto \{i_1, \ldots, i_l\}\times\{j_1-|Q|, \ldots, j_s-|Q|\}.\] The fact that for any two posets $M_1$ and $M_2$, $M_1\times M_2\cong M_2\times M_1$ completes the proof since the product on $\widetilde{\mathcal{R}}$ is exactly the Cartesian product.\end{proof}
This theorem could be very useful for detecting posets $P$ such that $\mathrm{Rev}(P)=\emptyset$ since this condition is equivalent to $\Phi(P)=0$. Since any poset $P$ can be written uniquely as a product of irreducible posets and since $\Phi$ is multiplicative, we may focus our attention only on irreducible posets. Any discrete poset $T_n$ with $n$ vertices is irreducible and, for $n\geq 2$, $\Phi(T_n)=0$ since $\mathrm{Rev}(T_n)=\emptyset$. In this way, we obtain the following.

\begin{corollary}
    If $P$ is a poset that contains a discrete poset $T_n$ for $n\geq 2$ as its factor, then $\mathrm{Rev}(P)=\emptyset.$
\end{corollary}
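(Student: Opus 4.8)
The plan is to read the corollary off from the theorem just proved, exploiting that $\Phi$ is multiplicative. The crucial preliminary remark is that for every poset $P$ one has $\mathrm{Rev}(P)=\emptyset$ if and only if $\Phi(P)=0$. Indeed, $\Phi(P)=\sum_{\omega\in\mathrm{Rev}(P)}\mathrm{Comp}(P,\omega)$, and every summand $\mathrm{Comp}(P,\omega)$ is a nonempty plucking, hence a mountain, hence a basis vector of $\widetilde{\mathcal{R}}$; collecting the listings that produce isomorphic pluckings expresses $\Phi(P)$ as a combination of basis vectors with strictly positive integer coefficients, so it vanishes exactly when the indexing set $\mathrm{Rev}(P)$ is empty. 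This is the only step that needs a word of care, and it is essentially immediate from the fact that the isomorphism classes of finite mountains form a basis of $\widetilde{\mathcal{R}}$.

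Next I would recall, as in the earlier example, that $\Phi(T_n)=0$ for $n\ge 2$: every $T_n$-listing is already a linear extension of $T_n$, so $\mathrm{Comp}(T_n,\omega)=2^{[n-1]}$ for every $\omega$, whose unique minimal set of border points is $\emptyset$; thus this plucking is not complete once $n\ge 2$, and therefore $\mathrm{Rev}(T_n)=\emptyset$, i.e. $\Phi(T_n)=0$.

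Finally, suppose $P$ has $T_n$ with $n\ge 2$ as a factor, that is, $P=A\,T_n\,B$ for some (possibly empty) posets $A,B$; since each $T_n$ is irreducible, this is precisely the condition that $T_n$ occurs in the factorization of $P$ into irreducibles, so multiplicativity of $\Phi$ applies and gives $\Phi(P)=\Phi(A)\,\Phi(T_n)\,\Phi(B)=0$. The preliminary remark then yields $\mathrm{Rev}(P)=\emptyset$, as claimed. I do not expect any real obstacle here beyond ensuring that ``factor'' is read in the ordinal-sum sense, so that the multiplicativity of $\Phi$ can be invoked verbatim.
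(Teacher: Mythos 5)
Your proof is correct and takes essentially the same route as the paper, which likewise combines the equivalence $\mathrm{Rev}(P)=\emptyset\iff\Phi(P)=0$, the multiplicativity of $\Phi$, and the fact that $\Phi(T_n)=0$ for $n\geq 2$ (because every $T_n$-listing is already a linear extension, so $\mathrm{Comp}(T_n,\omega)$ is never a complete plucking). Your explicit justification that $\Phi(P)$ is a strictly positive integer combination of basis mountains, hence vanishes only when the indexing set $\mathrm{Rev}(P)$ is empty, fills in a step the paper merely asserts.
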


\begin{example}
     Let $P$ be the poset whose digraph $D_P$ is shown in the diagram below.
 \begin{center}
\begin{tikzcd}
                                                              & 6                                  &                                                   \\
4 \arrow[ru]                                                  &                                    & 5 \arrow[lu]                                      \\
                                                              & 3 \arrow[ru] \arrow[uu] \arrow[lu] &                                                   \\
1 \arrow[ru] \arrow[rruu, bend right] \arrow[ruuu] \arrow[uu] &                                    & 2 \arrow[uu] \arrow[lluu, bend left] \arrow[luuu]
\end{tikzcd}
\end{center}
Note that $P=P_1P_2P_3$, where $P_1=P|_{\{1, 2, 3\}}$, $P_2=P|_{\{4, 5\}}$ and $P_3=P|_{\{6\}}$. Since the second factor is the trivial poset $T_2$, we have that $\mathrm{Rev}(P)=\emptyset$.

\end{example}
However, the implication in the previous corollary is not an equivalence. 
\begin{example}
Let $P$ be the poset whose digraph $D_P$ is shown in the diagram below.
\begin{center}
    \begin{tikzcd}
             & 4 &              &   \\
1 \arrow[ru] &   & 2 \arrow[lu] & 3
\end{tikzcd}
\end{center}
$P$ is an irreducible poset, but $\mathrm{Rev}(P)=\emptyset.$
    
\end{example}

\begin{corollary}\label{paran}
    If $P$ is $(2+2)-$free and $\mathrm{Rev}(P)=\emptyset$, then the number of linear extensions of $P$ is even.
\end{corollary}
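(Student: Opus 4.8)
The plan is to compare the two evaluations $l_P(1)$ and $l_P(-1)$ of the linear polynomial. The point is that $(2+2)$-freeness forces $l_P$ to be a polynomial in $m$ with nonnegative \emph{integer} coefficients, which converts the identities $l_P(1)=\zeta(P)$ and $l_P(-1)=0$ into the desired parity statement.

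First I would note that, since $P$ is $(2+2)$-free, Theorem \ref{prazvoj} together with $\mathrm{ps}^1(p_\lambda)(m)=m^{l(\lambda)}$ yields $l_P(m)=\sum_{\sigma\in\mathfrak{S}(\mathrm{inc}(P))}m^{l(\mathrm{type}(\sigma))}$, which is exactly the content of Corollary \ref{polinom}. Hence $l_P(1)-l_P(-1)=\sum_{\sigma}\bigl(1-(-1)^{l(\mathrm{type}(\sigma))}\bigr)$ is an even integer, so $l_P(1)\equiv l_P(-1)\pmod 2$. At the same time $l_P(1)=\mathrm{ps}^1(L_P)(1)$ is the coefficient of $M_{(n)}$ in $L_P$, which by Equation (\ref{kanonikal}) equals $\zeta(P)$, the number of linear extensions of $P$. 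Thus it remains only to prove $l_P(-1)=0$.

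For this I would use the reciprocity formula (\ref{reciprocity}): $l_P(-1)=\mathrm{ps}^1(S(L_P))(1)$, and since $M_{(n)}=M_\emptyset$ has the same coefficient as $F_\emptyset$ in any element of $QSym_n$ (the transition $M_I=\sum_{I\subseteq J}(-1)^{|J|-|I|}F_J$ makes $[F_\emptyset]\Phi=[M_\emptyset]\Phi$), this equals $[F_\emptyset]S(L_P)$. But the computation in the proof of Theorem \ref{karakteric}, specializing the displayed formula for $S(L_P)$ to $I=[n-1]$, already shows $[F_\emptyset]S(L_P)=(-1)^n\sum_{\omega\in\Sigma_{P^*}}\chi_{\mathrm{Comp}(P^*,\omega)}$, and by Lemma \ref{nestajanje} only $P^*$-reversing listings survive, so $[F_\emptyset]S(L_P)=(-1)^n\sum_{\omega\in\mathrm{Rev}(P^*)}\chi_{\mathrm{Comp}(P^*,\omega)}$. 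Finally, Lemma \ref{lemica} gives $\mathrm{Comp}(P,\omega)^{\mathrm{op}}=\mathrm{Comp}(P^*,\omega^{\mathrm{rev}})$, and a plucking is complete exactly when its opposite is, so $\omega\in\mathrm{Rev}(P)$ iff $\omega^{\mathrm{rev}}\in\mathrm{Rev}(P^*)$; the hypothesis $\mathrm{Rev}(P)=\emptyset$ therefore forces $\mathrm{Rev}(P^*)=\emptyset$ and hence $l_P(-1)=0$. Combining the two paragraphs, $\zeta(P)=l_P(1)\equiv l_P(-1)=0\pmod 2$.

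There is no deep obstacle here; the content is entirely bookkeeping, so the step I would be most careful with is the chain of coefficient identifications in the last paragraph — matching $[M_{(n)}]$ with $[F_\emptyset]$ on $QSym_n$, and verifying that the formula for $[F_\emptyset]S(L_P)$ used above is precisely the intermediate identity produced inside the proof of Theorem \ref{karakteric}. The conceptual idea is simply that $(2+2)$-freeness upgrades the $p$-integrality of $L_P$ to the congruence $l_P(1)\equiv l_P(-1)$, while the antipode together with reciprocity identifies $l_P(-1)$ with a sum over $\mathrm{Rev}(P^*)$ that is empty under the hypothesis.
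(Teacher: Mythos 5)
Your proposal is correct and follows essentially the same route as the paper: reciprocity identifies $l_P(-1)$ with $[F_\emptyset]S(L_P)$, the intermediate formula from the proof of Theorem \ref{karakteric} turns this into $(-1)^n\zeta_1$ of a dual poset, and the integrality of $l_P$ from Corollary \ref{polinom} gives the congruence $l_P(1)\equiv l_P(-1)\pmod 2$. The only cosmetic difference is that you apply reciprocity to $L_P$ and land on $\mathrm{Rev}(P^*)$ (supplying the easy equivalence $\mathrm{Rev}(P)=\emptyset\iff\mathrm{Rev}(P^*)=\emptyset$), whereas the paper applies it to $L_{P^*}$ and lands on $\mathrm{Rev}(P)$ directly.
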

\begin{proof}
    The reciprocity formula (\ref{reciprocity}) yields $\text{ps}^1(L_{P^*})(-1)=\text{ps}^1(S(L_{P^*}))(1),$ i.e. $l_{P^*}(-1)=[F_\emptyset](S(L_{P^*})).$ From the proof of Theorem \ref{karakteric}, we see that $[F_\emptyset](S(L_{P^*}))=(-1)^{|P|}\zeta_1(P)$. If $\mathrm{Rev}(P)=\emptyset$, then $\zeta_1(P)=0$ and therefore $l_{P^*}(-1)=0$. If $P$ is $(2+2)-$free, $l_P$ has integer coefficients according to Corollary \ref{polinom}. Consequently, $\zeta(P)=l_P(1)\equiv_2 l_P(-1)=l_{P^*}(-1)=0.$ 
\end{proof}

\section{Further avenues}

In this section, we propose some natural questions and conjectures that could serve as a beacon for the further research of this topic. In the spirit of the conclusion of the last chapter, a question that arises spontaneously is the characterization of the posets $P$ such that $\mathrm{Rev}(P)=\emptyset$, i.e. the posets $P$ that satisfy $\Phi(P)=0$. One step in this direction was made in Corollary \ref{paran}, where we exploited the relationship between the characters $\zeta$ and $\zeta_1$. Is there a way to deepen the understanding of this interconnection? One possible way to acquire that could be finding a deletion-contraction-like recurrence for the linear polynomial $l_P$.

Furthermore, we may ask ourselves if there could exist listings $\omega\in\mathrm{Rev}(P)$ such that $\chi_{\mathrm{Comp}(P, \omega)}=0$. Example \ref{kontraprimer} tells us that there are complete pluckings $\mathcal{A}$ such that $\chi_\mathcal{A}=0$. However, $\mathcal{A}$ from this example cannot be seen as $\mathrm{Comp}(P, \omega)$ for some poset $P$ and $\omega\in\Sigma_P$. This follows from the following consideration.

Let $P$ be a poset, $\omega\in\Sigma_P$ and let $M_1, \ldots, M_k$ be the minimal sets of $\mathrm{Comp}(P, \omega)$. Clearly, if $\omega_l\geq_P\omega_{l+1}$, then $l\in M_j$ for every $j\in [k]$. Conversely, if $\{1, 2\}\subseteq M_i$ for some $i$, then $\omega_1\geq_P\omega_2$ and hence $1\in M_j$ for every $j\in[k]$. Similarly, if $\{n-2, n-1\}\subseteq M_i$, then $\omega_{n-1}\geq_P\omega_n$ and therefore $n-1\in M_j$ for every $j\in[k]$. Finally, for $l\in\{2, \ldots, n-2\}$, if $\{l-1, l, l+1\}\subseteq M_i$ for some $i$, then $\omega_{l}\geq_P\omega_{l+1}$ and consequently $l\in M_j$ for every $j\in [k]$. These conditions are not satisfied in the plucking given in Example \ref{kontraprimer}.

Therefore, in order to answer whether there exists a listing $\omega\in \mathrm{Rev}(P)$ such that $\chi_{\mathrm{Comp}(P, \omega)}=0$, it is not enough to consider the complete plucking sets. We need a deeper understanding of special sets of this kind - the sets that are $\mathrm{Comp}(P, \omega)$ for some $\omega\in\mathrm{Rev}(P)$.

\begin{conjecture}
    If $P$ is a poset and $\omega\in\mathrm{Rev}(P)$, then $\chi_{\mathrm{Comp}(P, \omega)}\neq 0$.
\end{conjecture}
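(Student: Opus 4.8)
The plan is to attack the conjecture through a structural analysis of the pluckings that arise as $\mathrm{Comp}(P,\omega)$ for $P$-reversing listings $\omega$, extracting enough combinatorial constraints on the minimal sets $M_1,\ldots,M_k$ to force $\chi_\mathcal{A}\neq 0$. The key tool is Lemma~\ref{nestajanje}: since $\omega$ is $P$-reversing, the plucking is complete, so the only survivors of $\chi_\mathcal{A}$ are the subsets of $[n-1]$ that actually lie in $\mathcal{A}$, and $\chi_\mathcal{A}=\sum_{S\in\mathcal{A}}(-1)^{|[n-1]|-|S|}$. I would first establish a sharpening of the observations already recorded just before the conjecture: for every $\ell\in[n-1]$, the condition $\ell\in S$ for all $S\in\mathcal{A}$ is equivalent to $\omega_\ell\geq_P\omega_{\ell+1}$, and more generally a ``block'' of consecutive descents of $\omega$ forces the corresponding indices into every set of $\mathcal{A}$. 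So $\mathcal{A}$ decomposes as a product (in the poset sense) of a ``fixed'' part coming from the descent positions and a genuinely varying part on the remaining positions; by the multiplicativity of $\chi$ on Cartesian products it suffices to treat the case where $\omega$ has \emph{no} descents, i.e. $\omega_\ell\not\geq_P\omega_{\ell+1}$ for all $\ell$, while still being $P$-reversing.

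In that reduced case, the next step is to understand the minimal sets more finely. For each $i\in[n-1]$ there is a witnessing pair $j\le i<m$ with $\omega_m\leq_P\omega_j$; choosing $j$ as large as possible and $m$ as small as possible for each $i$ produces a family of ``intervals'' $[j,m-1]$, and every minimal set $M_t$ is a minimal hitting set (transversal) of a suitable subfamily of these intervals. I would try to prove that the family of minimal sets of $\mathrm{Comp}(P,\omega)$ is precisely the family of minimal transversals of the interval hypergraph whose edges are the inclusion-minimal ``reversing intervals'' $[j,m-1]$ associated to pairs $\omega_m\le_P\omega_j$. Interval hypergraphs are well-behaved: minimal transversals of a family of intervals on a line have a clean recursive structure (peel off the interval with the smallest right endpoint, branch over its elements), which should let me set up an inductive evaluation of $\chi_\mathcal{A}$.

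The heart of the argument is then an induction on $n$ (or on the number of minimal reversing intervals) showing $\chi_\mathcal{A}\ne 0$ for the upper set generated by all transversals of an interval hypergraph that \emph{covers} $[n-1]$. Concretely, pick the reversing interval $[a,b]$ with $b$ minimal; I would split $\mathcal{A}$ according to which element of $\{a,\ldots,b\}$ a set uses to hit this interval and how it behaves to the right of $b$, getting a signed recurrence $\chi_\mathcal{A}=\sum_{c=a}^{b}(\pm)\chi_{\mathcal{A}_c}$ where each $\mathcal{A}_c$ is the plucking associated to a strictly smaller interval system on $\{c+1,\ldots\}\cup\{\text{stuff before }a\}$, arising itself from a genuine $P'$-reversing listing obtained by restricting $\omega$. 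The sign bookkeeping, which is where Example~\ref{kontraprimer} gets killed, should come out so that the contributions do not telescope to zero — I expect the right invariant to be something like $\chi_\mathcal{A}=\prod(\text{something})$ with each factor a nonzero integer, or an alternating-sum identity resembling the one in Example~\ref{kontraprimer} for $2^{[n-1]}$ but now obstructed by the descent-free hypothesis forcing the interval system to be ``thin''. The main obstacle, and the step I am least sure about, is exactly this: proving that the minimal sets of $\mathrm{Comp}(P,\omega)$ really are the minimal transversals of an interval hypergraph (a priori the minimal sets could be more complicated than interval transversals), and then controlling the signs in the recurrence so that cancellation of the Example~\ref{kontraprimer} type is provably impossible under the constraints we derived. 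If the interval-hypergraph description fails, the fallback is to prove the weaker structural fact that every minimal set of $\mathrm{Comp}(P,\omega)$ is an antichain-like transversal with no two consecutive elements unless forced — which the paper already half-proves — and feed that into a direct sign-reversing involution on the non-surviving part of $\mathcal{A}$, mimicking the involution $f$ from the proof of Lemma~\ref{nestajanje} but localized to a carefully chosen ``free'' coordinate that exists precisely because $\omega$ is $P$-reversing but has a minimal set missing some index.
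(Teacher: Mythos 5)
This statement is an open conjecture in the paper — the author offers no proof of it — so there is nothing to compare your proposal against. Evaluated on its own terms, your proposal contains one genuinely correct and useful structural observation: $\mathrm{Comp}(P,\omega)$, viewed as a set system, is exactly the collection of transversals of the interval hypergraph on $[n-1]$ whose edges are the intervals $[i,j-1]$ for pairs $i<j$ with $\omega_j\leq_P\omega_i$, and completeness of the plucking is equivalent to these intervals covering $[n-1]$. That identification is not the step you should worry about; it follows directly from the definition of compatibility.

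The fatal gap is the step you yourself flag as uncertain: the hope that ``the contributions do not telescope to zero.'' They do. Take the antichain of intervals $[1,2],[2,3],[3,4]$ on $[4]$; its minimal transversals are $\{1,3\},\{2,3\},\{2,4\}$, the generated upper set has $3-4+1=0$ as its alternating sum, and — unlike the plucking of Example~\ref{kontraprimer} — this system \emph{is} realized by a poset. Concretely, let $P$ on $\{1,\dots,5\}$ be the disjoint union of the chains $5<_P3<_P1$ and $4<_P2$, and let $\omega=(1,2,3,4,5)$. The pairs with $\omega_j\leq_P\omega_i$ are $(1,3),(3,5),(1,5),(2,4)$, giving minimal intervals $[1,2],[2,3],[3,4]$ (the interval $[1,4]$ from $(1,5)$ is redundant), which cover $[4]$, so $\omega\in\mathrm{Rev}(P)$; one checks directly that $\mathrm{Comp}(P,\omega)$ has minimal sets $\{1,3\},\{2,3\},\{2,4\}$ and hence $\chi_{\mathrm{Comp}(P,\omega)}=3-4+1=0$. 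This example satisfies all the necessary conditions recorded in the paper just before the conjecture (no minimal set contains $\{1,2\}$, $\{n-2,n-1\}$, or three consecutive indices), so it is not excluded by them. In short: your reduction to interval transversal systems is sound, but rather than proving the conjecture it disproves it, and no choice of sign bookkeeping in your proposed recurrence can rescue a statement that is false. The productive salvage of your approach is the converse use: the interval-hypergraph description gives an efficient way to search for, and certify, counterexamples, and suggests the real problem is to characterize which covering interval systems arise from posets and which of those have vanishing Euler character.
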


If this conjecture turns out to be false,  we could seek the characterization of pairs $(P, \omega)$ such that $\omega\in\mathrm{Rev}(P)$, but $\chi_{\mathrm{Comp}(P, \omega)}=0$. Let $(f_{0}, \ldots, f_r)$ be the $\widetilde{f}-$vector of $\mathrm{Comp}(P, \omega)$. Since $\chi_{\mathrm{Comp}(P, \omega)}=\sum(-1)^if_i $, we have the following.

\begin{theorem}
    The question of whether $\chi_{\mathrm{Comp}(P, \omega)}=0$ depends only on the $\widetilde{f}-$vector of $\mathrm{Comp}(P, \omega)$.
\end{theorem}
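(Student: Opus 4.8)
The plan is to observe that this is an immediate consequence of the description of the Euler character that was set up in Section~3. First I would recall that since $\mathrm{Comp}(P,\omega)$ is a plucking, it is in particular a mountain: it has $[n-1]$ as its unique maximal element, so its $\widetilde{f}$-vector $(f_0,\ldots,f_r)$ is well defined, with $f_i$ the number of $S\in\mathrm{Comp}(P,\omega)$ of depth $i$. This makes the hypothesis of the statement meaningful.

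Next I would invoke the identity $\chi(Q)=\sum_{p\in Q}(-1)^{d(p)}=\sum_{i=0}^{h(Q)}(-1)^{i}f_i$, which holds for any mountain $Q$, applied to $Q=\mathrm{Comp}(P,\omega)$; this is exactly the formula recorded at the end of Section~3 together with the remark therein that $\chi_{\mathrm{Comp}(P,\omega)}=\chi(\mathrm{Comp}(P,\omega))$. It follows that the scalar $\chi_{\mathrm{Comp}(P,\omega)}$ is obtained from the $\widetilde{f}$-vector by forming the alternating sum $\sum_{i}(-1)^{i}f_i$, and hence depends on the pair $(P,\omega)$ only through that vector. In particular the truth value of the equality $\chi_{\mathrm{Comp}(P,\omega)}=0$ is a function of $(f_0,\ldots,f_r)$ alone, which is precisely what is claimed.

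There is essentially no obstacle here beyond confirming that the two ingredients — the $\widetilde{f}$-vector and the closed form for $\chi$ in terms of it — are genuinely available in this generality; both were introduced in Section~3 precisely for mountains, and pluckings were shown there to be mountains. If one wished to say slightly more than the statement demands, one could note that two pairs $(P,\omega)$ and $(P',\omega')$ whose associated pluckings share the same $\widetilde{f}$-vector actually satisfy $\chi_{\mathrm{Comp}(P,\omega)}=\chi_{\mathrm{Comp}(P',\omega')}$, not merely the same vanishing behaviour; the phrasing above is the weaker consequence directly relevant to the preceding conjecture.
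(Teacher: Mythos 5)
Your proposal is correct and coincides with the paper's own (very short) justification: the paper simply notes that $\chi_{\mathrm{Comp}(P,\omega)}=\sum_i(-1)^i f_i$, the alternating sum of the $\widetilde{f}$-vector, exactly as you do. Your additional remarks (that pluckings are mountains so the $\widetilde{f}$-vector is defined, and that the full value, not just its vanishing, is determined) are accurate but do not change the argument.
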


Theorem \ref{prazvoj} gives a combinatorial interpretation of the coefficients of $L_P$ in the power sum basis for $(2+2)-$free posets. Therefore, a natural avenue to pursue is to find the expansion in this basis for posets that are not of this type. The example given after Corollary \ref{ppozitivan} suggests that only certain coefficients differ from what Theorem \ref{prazvoj} claims. Similarly, a generalization of Theorem \ref{srazvoj} would also be valuable.

Finally, we believe that the number $A(P)$ of automorphisms of poset $P$ can be uncovered from its sets of border points. If $\omega\in \Sigma_P$ and $f: P\rightarrow P$ is a bijection, then $f(\omega)=(f(\omega_1), \ldots, f(\omega_n))$ is also a $P$-listing. Moreover, if $f$ is an automorphism of $P$, then $\mathrm{Comp}(P, \omega)=\mathrm{Comp}(P, f(\omega))$. Let $\mathcal{A}_1$, \ldots, $\mathcal{A}_k$ be all mutually different collections of type $\mathrm{Comp}(P, \omega)$ for $\omega\in\Sigma_P$, where $\mathcal{A}_i$ appears for $l_i$ different $P$-listings. Clearly, $l_1+\cdots +l_k=n!$. According to our previous observation, $A(P)\leq l_i$ for every $i$.

\begin{conjecture}
    For any poset $P$, $A(P)=\mathrm{min} \{l_i\}.$
\end{conjecture}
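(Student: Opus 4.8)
The inequality $A(P)\le\min_i l_i$ — already noted in the discussion preceding the conjecture — is the easy half. The group $\mathrm{Aut}(P)$ acts on $\Sigma_P$, and this action is \emph{free}: an automorphism $f$ with $f(\omega)=\omega$ satisfies $f(\omega_j)=\omega_j$ for all $j$, hence $f=\mathrm{id}$. Since $\mathrm{Comp}(P,\omega)=\mathrm{Comp}(P,f(\omega))$ for every automorphism $f$, the map $\omega\mapsto\mathrm{Comp}(P,\omega)$ is constant on $\mathrm{Aut}(P)$-orbits, so each fibre $\{\omega\in\Sigma_P:\mathrm{Comp}(P,\omega)=\mathcal{A}_i\}$ is a disjoint union of orbits, each of cardinality $A(P)$; thus $A(P)\mid l_i$ for every $i$, and in particular $A(P)\le\min_i l_i$. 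What remains is to exhibit one \emph{rigid} listing $\omega^\star$, that is, a listing whose fibre consists of a single $\mathrm{Aut}(P)$-orbit.

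The plan is to construct such an $\omega^\star$ explicitly and then reconstruct it, up to an automorphism, from $\mathrm{Comp}(P,\omega^\star)$. Recall that $S\subseteq[n-1]$ is a set of border points of $\omega$ precisely when $S$ meets the interval $[i,j-1]$ for every inversion $i<j$ with $\omega_j<_P\omega_i$; hence $\mathrm{Comp}(P,\omega)$ is equivalent data to the family of inclusion-minimal such ``inversion intervals''. A first candidate for $\omega^\star$ is a reverse linear extension of $P$ (a linear extension of $P^\ast$), which inverts \emph{every} comparable pair of $P$ and therefore contributes, for each relation $a<_P b$, the interval $[p_{\omega^\star}(b),p_{\omega^\star}(a)-1]$. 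Given another listing $\omega'$ with $\mathrm{Comp}(P,\omega')=\mathrm{Comp}(P,\omega^\star)$, one would attempt to read off from the common interval family the relative $\omega'$-order of each pair of comparable elements, then bootstrap along chains of comparabilities to pin down the $\omega'$-order inside each connected component of $\mathrm{inc}(P)$ — equivalently, inside each irreducible factor of the ordinal-sum decomposition $P=Q_1\cdots Q_k$ — and finally argue that the only residual ambiguity, namely how the mutually incomparable blocks are shuffled together, is realized exactly by $\mathrm{Aut}(P)$. An induction on $|P|$, peeling off an element extremal in an $\mathrm{Aut}(P)$-invariant way (for instance of largest depth, with a canonical tie-break) and comparing $\mathrm{Comp}(P,\omega)$ against the analogous plucking of the smaller poset, is a plausible engine for this bootstrap; the multiplicativity $A(P)=\prod_i A(Q_i)$ under the ordinal-sum decomposition suggests first reducing to the case where $\mathrm{inc}(P)$ is connected.

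The principal obstacle is that a reverse linear extension need \emph{not} be rigid, so the correct one must be chosen with care. For the ``N''-poset with relations $a<_P c$, $b<_P c$, $b<_P d$ one has $\mathrm{Aut}(P)=\{\mathrm{id}\}$; reversing the linear extension $(b,a,d,c)$ produces $(c,d,a,b)$, whose fibre is a single point, whereas reversing $(a,b,c,d)$ produces $(d,c,b,a)$, whose fibre also contains $(b,c,a,d)$ and hence has size two. The examples point toward selecting, among reverse linear extensions, one in which consecutive entries are incomparable as often as possible — equivalently, one keeping the minimal inversion intervals maximally ``spread out'' so that $\mathrm{Comp}(P,\omega^\star)$ forces the configuration rigidly — but isolating the precise extremal rule and proving that it always yields a rigid listing is exactly the delicate point, and it is conceivable that no single uniform rule works and the inductive peeling is genuinely needed.

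A second, intertwined difficulty is that $\mathrm{Comp}(P,\omega)$ is essentially blind to the relative placement of pairwise incomparable elements, and this blindness is precisely the slack that $\mathrm{Aut}(P)$ exploits; the reconstruction argument must therefore establish that this slack contributes \emph{exactly} $A(P)$ worth of listings and nothing more. Making this quantitative presumably requires tracking how $\mathrm{Aut}(P)$ acts on the depth-levels (antichains) of $P$ and on the components of $\mathrm{inc}(P)$, and showing that once the order of every comparable pair has been recovered, any two listings compatible with that data and with $\mathrm{Comp}(P,\omega^\star)$ differ by a level-preserving, component-permuting symmetry that is in fact an automorphism of $P$. Handling the ordinal-sum case first — where both $A$ and, one hopes, the rigid-witness construction are multiplicative via the factorization of $\mathrm{Comp}$ recorded in the decomposition section — should cut the problem down to posets with connected incomparability graph, where the bootstrap has the best chance of closing.
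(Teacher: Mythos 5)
This statement is left as an open conjecture in the paper; there is no proof there to compare against, so the only question is whether your argument actually closes it. It does not. What you establish rigorously is only the easy direction: the action of $\mathrm{Aut}(P)$ on $\Sigma_P$ is free, the map $\omega\mapsto\mathrm{Comp}(P,\omega)$ is constant on orbits, hence $A(P)\mid l_i$ for every $i$ and in particular $A(P)\le\min_i\{l_i\}$. This is correct (and the divisibility refinement is a genuine, if small, improvement on the paper's remark that $A(P)\le l_i$), and your N-poset computation is also correct: $(d,c,b,a)$ and $(b,c,a,d)$ do share the plucking generated by $\{2\}$, so not every reverse linear extension is rigid. But the conjecture is precisely the reverse inequality, i.e.\ the existence of a listing $\omega^\star$ whose fibre is a single $\mathrm{Aut}(P)$-orbit, and for that you offer only a plan whose two load-bearing steps you yourself flag as unresolved: (i) no rule is exhibited that selects a rigid $\omega^\star$ (you concede "it is conceivable that no single uniform rule works"), and (ii) the reconstruction step --- that the slack left by $\mathrm{Comp}(P,\omega^\star)$ on incomparable pairs is realized \emph{exactly} by $\mathrm{Aut}(P)$ and nothing more --- is asserted as a hope, not argued. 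Until one of these is carried out, nothing rules out a poset for which every fibre strictly exceeds $A(P)$.

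A further caution on the proposed engine: your reduction via the ordinal-sum factorization $P=Q_1\cdots Q_k$ only shrinks the problem to posets whose \emph{ordinal-sum} decomposition is trivial, which is not the same as $\mathrm{inc}(P)$ being connected (the trivial poset $T_n$ is irreducible with $\mathrm{inc}(T_n)$ a clique, and there the unique fibre has size $n!=A(T_n)$, consistent but uninformative), and the multiplicativity $A(P)=\prod_i A(Q_i)$ together with the product formula for $\mathrm{Comp}$ would itself need a proof that fibres multiply, i.e.\ that every listing in the fibre of a concatenated witness is itself a concatenation. So the proposal is a reasonable research outline, but as a proof of the conjecture it has a genuine gap exactly where the difficulty lies.
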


%% \section{}
%% \label{}

%% References
%% Following citation commands can be used in the body text:
%% Usage of \cite is as follows:
%%   \cite{key}          ==>>  [#]
%%   \cite[chap. 2]{key} ==>>  [#, chap. 2]
%%   \citet{key}         ==>>  Author [#]

%% References with bibTeX database:

\bibliographystyle{model1a-num-names}
\bibliography{<your-bib-database>}

\begin{thebibliography}{9}

\bibitem{ABS} M. Aguiar, N. Bergeron, F. Sottile, Combinatorial Hopf algebras and generalized Dehn-Sommerville relations, Compositio Mathematica 142 (2006) 1-30.
\bibitem{ABM} J. C. Aval, N. Bergeron, J. Machacek, New invariants for permutations, orders and graphs, Advances in Applied Mathematics 121, (2020), 102080.

%\bibitem{BZ} N. Bergeron, M. Zabrocki. The Hopf algebras of symmetric functions and quasisymmetric functions in non-commutative variables are free and cofree, J. of Algebra and its
%Application 8 (2009), 581–600.

\bibitem{CB} C. Berge, Graphs and Hypergraphs, North-Holland Mathematical Library 6, 2nd edition, North-Holland (1976).

\bibitem{BS} N. Bergeron, F. Sotille, Hopf algebras of edge-labeled posets, J. Algebra, 216 (1999), 641–651.



%\bibitem{CvW} S. Cho, S. van Willigenburg, Chromatic bases for symmetric functions, Electron. J. Combin. 23, 6pp. (2016) 

%\bibitem{C} T. Y. Chow, The Path-Cycle Symmetric Function of a Digraph, Adv. Math. 118 (1996), 71-98.


%\bibitem{DvW} S. Dahlberg and S. van Willigenburg, Lollipop and lariat symmetric functions, SIAM J. Discrete Math. 32, 1029-1039 (2018).

%\bibitem{SG}D. D. Gebhard, B. E. Sagan, A Noncommutative Chromatic Symmetric Function, J.
%Algebraic Combin. 13 (2001), 227–255.

\bibitem{E} R. Ehrenborg, On posets and Hopf algebras, Adv. Math., 119 (1987), 42–99.

\bibitem{G} V. Gasharov, Incomparability graphs of $(3+1)-$free posets are $s-$positive, Discrete Math. 157, 193-197 (1996). 

\bibitem{S} D. Grinberg, R. Stanley, The Redei-Berge symmetric function of a directed graph, \arxiv 2307.05569v1.



\bibitem{GS}  V. Gruji\'c, T. Stojadinovi\'c, The Redei-Berge Hopf algebra of digraphs, \arxiv 2402.07606v2, accepted in Periodica Mathematica Hungarica.

%\bibitem{MGP} M. Guay-Paquet, A modular relation for the chromatic symmetric functions of (3+ 1)-free
%posets, \arxiv  1306.2400 (2013)

%\bibitem{H} T. Hikita, A proof of the Stanley-Stembridge conjecture, \arxiv: 2410.12758 (2024)

%\bibitem{MMW} J. Martin, M. Morin and J. Wagner, On distinguishing trees by their chromatic symmetric functions, J. Combin. Theory Ser. A 115, 237-253 (2008).

\bibitem{JR} S. Joni, G.C. Rota, Coalgebras and bialgebras in combinatorics, St. Appl. Math., 61
(1979), 93–139.

\bibitem{MS} S. Mitrovi\'c, T. Stojadinovi\'c, Some properties of the Redei-Berge function and related combinatorial Hopf algebras, \arxiv 2407.18608v3.

%\bibitem{OS} R. Orellana and G. Scott, Graphs with equal chromatic symmetric function, Discrete Math. 320, 1-14 (2014).


\bibitem{LR} L. Redei, Ein kombinatorischer Satz, Acta Litteraria Szeged 7 (1934), 39-43.

%\bibitem{RS} M. H. Rosas, B. E. Sagan, Symmetric Functions in Noncommuting Variables, Transactions of the American Mathematical Society, 358(1), 215–232

%\bibitem{ST} B. E. Sagan, F. Tom, Chromatic symmetric functions and change of basis, \arxiv 2407.06155v1 (2024) 

\bibitem{WRS} W. R. Schmitt, Antipodes and incidence coalgebras, J. Combin. Theory Ser. A, 46 (1987), 264–290.


\bibitem{EC} R. Stanley, Enumerative combinatorics. Vol. 2, Cambridge Univ. Press, Cambridge,
(1999).

%\bibitem{W} G. Wiseman, Enumeration of paths and cycles and $e$-coefficients of incomparability graphs, \arxiv 0709.0430v1

\bibitem{SR} R. Stanley, A symmetric function generalization of the chromatic polynomial of a graph, Adv. Math. 111, 166-194 (1995).

%\bibitem{SW} J. Shareshian and M. Wachs, Chromatic quasisymmetric functions, Adv. Math. 295, 497-551 (2016).


\end{thebibliography}

%% Authors are advised to submit their bibtex database files. They are
%% requested to list a bibtex style file in the manuscript if they do
%% not want to use model1a-num-names.bst.

%% References without bibTeX database:

%\begin{thebibliography}{00}

\section{Declarations and statements}

The authors declare that no funds, grants, or other support were received during the preparation of this manuscript. The authors have no relevant financial or non-financial interests to disclose. All authors contributed to the study conception and design. All authors read and approved the final manuscript. 

\section{Data availability}

No data was used for the research described in the article.

%% \bibitem must have the following form:
%%   \bibitem{key}...
%%

% \bibitem{}

%\end{thebibliography}

\end{document}